\theoremstyle{plain}
\newtheorem{teo}{Theorem}[section]
\theoremstyle{plain}
\newtheorem{cor}[teo]{Corollary}
\theoremstyle{plain}
\newtheorem{lemma}[teo]{Lemma}
\theoremstyle{plain}
\theoremstyle{plain}
\newtheorem{prop}[teo]{Proposition}
\theoremstyle{definition}
\newtheorem{remark}[teo]{Remark}
\theoremstyle{definition}
\newtheorem{example}[teo]{Example}
\renewcommand{\r}{\mathbb{R}}
\newcommand{\n}{\mathbb{N}}
\newcommand{\dint}{\displaystyle\int}
\newcommand{\todeb}{\rightharpoonup}
\newcommand{\dpartial}[2]{\dfrac{\partial#1}{\partial#2}}
\newcommand{\restr}[2]{\left.\kern-\nulldelimiterspace#1\vphantom{\big|}\right|_{#2}}
\DeclareMathOperator{\vspan}{span}
\DeclareMathOperator{\Id}{Id}
\DeclareMathOperator{\SO}{SO}
\DeclareMathOperator{\Ort}{O}
\DeclareMathOperator{\skw}{skew}
\DeclareMathOperator{\sym}{sym}
\DeclareMathOperator{\dist}{dist}
\DeclareMathOperator{\diag}{diag}
\DeclareMathOperator{\deter}{det}
\DeclareMathOperator{\Tr}{Tr}
\DeclareMathOperator{\rank}{rank}
\DeclareMathOperator*{\argmax}{argmax}
\DeclareMathOperator{\KL}{K}
\DeclareMathOperator{\VK}{VK}
\DeclareMathOperator{\LVK}{LVK}
\DeclareMathOperator{\CVK}{CVK}
\DeclareMathOperator{\cof}{cof}
\Crefname{lemma}{lemma}{lemmas}
\Crefname{lemma}{Lemma}{Lemmas}
\Crefname{prop}{proposition}{proposition}
\Crefname{prop}{Proposition}{Propositions}
\Crefname{cor}{corollary}{corollaries}
\Crefname{cor}{Corollary}{Corollaries}
\Crefname{remark}{remark}{remarks}
\Crefname{remark}{Remark}{Remarks}
\Crefname{teo}{theorem}{theorems}
\Crefname{teo}{Theorem}{Theorems}
\Crefname{section}{Section}{Sections}
\title{On the hierarchy of plate models for a singularly perturbed multi-well nonlinear elastic energy}
\author{Edoardo Giovanni Tolotti\footnote{Dipartimento di Matematica, Universit\`a di Pavia, Italy. \\ \indent Email address: edoardogiovann.tolotti01@universitadipavia.it} \hspace{.5mm} \orcidlink{0009-0004-3279-245X}}
\date{}
\begin{document}

\maketitle
\let\thefootnote\relax\footnotetext{This is a post-peer-review, pre-copyedit version of an article published in Journal of Nonlinear Science. The final authenticated version is available in Open Access at \url{https://doi.org/10.1007/s00332-025-10174-3}}
\thispagestyle{empty}

\section*{Abstract}

In the celebrated work of Friesecke, James and M\"uller '06 the authors derive a hierarchy of models for plates by carefully analyzing the $\Gamma$-convergence of the rescaled nonlinear elastic energy. The key ingredient of their proofs is the rigidity estimate proved in an earlier work of theirs. Here we consider the case in which the elastic energy has a multi-well structure: this type of functional arises, for example, in the study of solid-solid phase transitions. Since the rigidity estimate fails in the case of compatible wells, we follow Alicandro, Dal Maso, Lazzaroni and Palombaro '18 and add a regularization term to the energy that penalizes jumps from one well to another, leading to good compactness properties. In this setting we recover the full hierarchy of plate models with an explicit dependence on the wells. Finally, we study the convergence of energy minimizers with suitable external forces and full Neumann boundary conditions. To do so, we adapt the definition of optimal rotations introduced by Maor, Mora '21.

\vspace{.2cm}

\noindent\textbf{Keywords}: Dimension reduction $\cdot$ Thin plates $\cdot$ Nonlinear elasticity $\cdot$ $\Gamma$-convergence

\noindent\textbf{AMS Classification}: 74K20 $\cdot$ 74B20 $\cdot$ 74G65 $\cdot$ 49J45

\section*{Introduction}

A large variety of elastic models for thin plates, i.e., slender bodies whose reference configuration has no intrinsic curvature, have been proposed in the mechanical literature (see for example \cite{CIARLET1997, ANTMAN2005, LOVE1927, VONKARMAN1907}). The objective of this work is to rigorously derive some of these models as $\Gamma$-limits of three-dimensional nonlinear elasticity. We focus on elastic energy densities that are minimized on a finite number of copies of $\SO(3)$.
This setting is relevant, for example, in the modeling of solid-solid phase transitions (see \cite{BALL1987}).

To be more precise, we consider a thin cylinder $\Omega_h = S \times (-\frac{h}{2}, \frac{h}{2})$, where the mid-plane $S$ is a subset of $\r^2$ and $h$ is the small thickness of the plate. Its elastic energy takes the form
\[
	E_h(w_h) = \int_{\Omega_h} W(\nabla w_h) \, dx\,,
\] 
where $w_h: \Omega_h \to \r^3$ is a deformation of $\Omega_h$ and
$W$ is the elastic energy density. We suppose that $W$ is minimized on the set $K = \cup_{i=1}^l \SO(3)U_i$, where $U_1, \dots, U_l$ are symmetric and positive-definite matrices. In this work, we study the $\Gamma$-limit of $\frac{1}{h^{\alpha + 1}}E_h$, as $h \to 0$, for $\alpha \geq 2$. Roughly speaking, this corresponds to the analysis of deformations whose energy per unit volume $V_h = h^{-1}E_h$ scales like $h^{\alpha}$.

This problem has been extensively studied in the mathematical literature.
The regime $\alpha = 0$, i.e., $V_h \sim 1$, leads to the so-called membrane theory, rigorously derived for the first time by Le Dret and Raoult in \cite{LEDRET1995} (see also \cite{FREDDI2004, HANZAAFSA2008}). 
Here the hypotheses on $ W $ are compatible with a multi-well structure.
The scaling $0 < \alpha < 2$ is largely unexplored and only partial results are available (see \cite{CONTI2007}). In the single-well setting (that is, $l = 1$ and $U_1 = \Id$), the cases $V_h \sim h^\alpha$, where $\alpha \geq 2$, have been studied by Friesecke, James, and M\"uller in \cite{FRIESECKE2002, FRIESECKE2006}. When $\alpha = 2$, they retrieve the so-called Kirchhoff-Love theory, first proposed by Kirchhoff in \cite{KIRCHOFF1850}. When $\alpha = 4$, they obtain the Von K\'arm\'an model introduced in \cite{VONKARMAN1907}, whereas in the case $2 < \alpha < 4$ and $\alpha > 4$ they derive the so-called constrained and linearized Von K\'arm\'an model, respectively. The key ingredient of all the latter results is the celebrated rigidity estimate proved by the same authors in \cite{FRIESECKE2006}. Indeed, a quantitative understanding of the closeness of deformations gradients to the set of rotations is needed in order to rigorously linearize the energy density near the identity.

Various extensions of the rigidity estimate are available in the multi-well case. For two strongly incompatible wells, the rigidity estimate has been proved by Chaudhuri and M\"uller in \cite{CHAUDHURI2004} (and later with a different proof by De Lellis and Székelyhidi in \cite{DELELLIS2006}).
In this setting the same authors also investigated the scaling $\alpha = 1$ in \cite{CHAUDHURI2006}. A further generalization is given by Chermisi and Conti in \cite{CHERMISI2010}, for a finite number of well-separated wells. However, it is well-known that the rigidity estimate fails without separation assumptions on the wells. Indeed, if two of the wells are rank-one connected, it is easy to construct continuous deformations with zero elastic energy, whose gradient oscillates between the two wells. In the absence of a rigidity estimate, energy bounds fail to provide the necessary compactness for deformations. 

One possible way to overcome this issue is to singularly perturb the elastic energy by adding a higher order term of the form
\[
	h^{-1}\eta^p(h) \int_{\Omega_h} |\nabla^2 w_h|^p \, dx\,,
\]
where $\eta(h) \to 0$ as $h \to 0$ and $p > 1$ is a suitable exponent. This is a classical way of selecting preferred configurations in the Van der Waals--Cahn--Hilliard theory. Indeed, this additional term introduces a competition in the minimization problem: the free energy favors deformations with gradient in $ K $, while the perturbation penalizes transitions between the wells.

Various analyses have been carried out in this setting. In the membrane scaling $\alpha = 0$, a full description of the $\Gamma$-limit of the perturbed energy with $p = 2$ is obtained in \cite{FONSECA2006, DALMASO2010}.
As in the non-perturbed case, the hypotheses on the energy density allow for $W$ to have a multi-well structure. The expression of the $ \Gamma $-limit depends on the behavior of $ \eta(h) / h $. We also recall the work by Shu \cite{SHU2000}, where an additional homogenization parameter is taken into account.

In our work we focus on the regimes $\alpha \geq 2$, and we do not assume any hypotheses of well-separation or connectedness of the wells.
We follow ideas from Alicandro, Dal Maso, Lazzaroni, and Palombaro in \cite{ALICANDRO2018}, where the linearization of a multi-well elastic energy is studied.
We show that the perturbation coefficient $ \eta(h) $ can be chosen in such a way that, at the limit, deformation gradients are forced to fall into a single well and at the same time the (rescaled) perturbation term becomes negligible.
Under these scaling assumptions on $ \eta(h) $, in \Cref{section:compactness} we show that the $L^2$-norm of the distance of the deformation gradient from a specific well can be bounded by a suitable power of the perturbed elastic energy.
In particular, once such a well has been identified, the usual rigidity estimate can be applied to deduce compactness.

The hierarchy of plate models that we derive is similar to the one found in \cite{FRIESECKE2002,FRIESECKE2006} for the single-well case. A key difference with respect to these works is the dependence of the limit models on the well around which we have to linearize. For $\alpha = 2$, we retrieve the Kirchhoff model (see \Cref{teo:gamma_convergence_alpha_2}). Sequences of deformations with $V_h \sim h^2$ converge to isometric embeddings of the mid-plane into $\r^3$, for a flat metric depending on the well. The resulting model is similar in spirit to the one obtained in \cite{BHATTACHARYA2016}, where the authors consider a prestrained model and the limit deformation realizes an isometric embedding of the mid-plane for a metric depending on the prestrain. For $2 < \alpha < 4$, the $\Gamma$-limit is given by the constrained Von K\'arm\'an model, where the Von K\'arm\'an equations now depend on the well and take the form
\begin{equation}
	\nabla u^T + \nabla u + |U^{-1}_i e_3|^2 \nabla v \otimes \nabla v = 0 \,. \label{eq:constraint_introduction}
\end{equation}
Here $u$ and $v$ are the limiting in-plane and out-of-plane displacements with respect to a reference deformation depending on the well (i.e., of the form $x \mapsto U_i x$). Equation \eqref{eq:constraint_introduction} means that $u$ and $v$ satisfy a matching isometry condition up to the second order, for a well-dependent flat metric (see \Cref{subsection:von_karman}). For $\alpha = 4$ and $\alpha > 4$, we retrieve the Von K\'arm\'an and the linearized Von K\'arm\'an model, respectively. All the $\Gamma$-convergence results for $\alpha > 2$ are stated in \Cref{teo:gamma_convergence}. The Von K\'arm\'an model we obtain is similar to the one derived in \cite{RICCIOTTI2017} in the prestrained case. 

Concerning the proofs, once compactness is established, the liminf inequality can be obtained arguing as in \cite{FRIESECKE2002, FRIESECKE2006}. Since the penalty term requires additional regularity, the truncation argument used in \cite{FRIESECKE2002, FRIESECKE2006} for the construction of the recovery sequences cannot be applied and is replaced by suitable approximation results. In particular, for $ 2 \leq \alpha < 4 $ we need to assume some higher regularity for the mid-plane $ S $. 

The last part of the paper is devoted to the convergence of (quasi-)minimizers for the pure traction problem. More precisely, we consider a sequence of dead loads $f_h$ and add to the energy their work per unit volume given by
\begin{equation}
	- h^{-1}\int_{\Omega_h} f_h \cdot w_h \, dx \,.
\end{equation}
We assume $ f_{h} $ to be of order $ h^{\frac{\alpha}{2}+1} $. For $ \alpha > 2 $ we expect the limiting force term to depend only on the out-of-plane displacement (which is of order $ h^{\frac{\alpha}{2}-1} $) and not on the in-plane  displacement (which is of order  $ h^{\alpha-2} $ for $ 2 \leq \alpha<4 $, $ h^{ \frac{ \alpha  }{ 2 }  }  $ for $ \alpha  \geq  4 $). However, since the plate is not clamped, we need to understand how the presence of the force term reduces the rotation invariance of the problem. To do so, we follow the approach by Maor and Mora \cite{MAOR2021}, where the notion of optimal rotations is introduced. These are rotations preferred by the forces, that can be different from the rotations selected by the rigidity estimate, around which linearization takes place.
We show how this concept can be adapted to the dimension reduction setting. In this framework, we deduce a minimization property for the limit of (quasi-)minimizing sequences. A precise statement of these results is given in \Cref{teo:convergence_minimizers_alpha_2,teo:convergence_minimizers}.

The paper is organized as follows. In \Cref{section:notations} we introduce some notations, and we state the main results. After proving some preliminary lemmas, in \Cref{section:compactness} we prove compactness of sequences with bounded perturbed energy. In \Cref{section:gamma-convergence} we conclude the proof of the $\Gamma$-convergence results. Finally, in \Cref{section:forces} we study the minimizing sequences in presence of dead loads. The \Cref{app:appendix} collects some results on isometries construction.

\section{Notation and main results} \label{section:notations}

\subsection{Notation and functional setting}

Consider a set of invertible matrices $U_1, \ldots, U_l \in \r^{3 \times 3}$ with positive determinant and associate to each of them an energy well $K_i = \SO(3) U_i$. We assume the wells to be disjoint, i.e., $U_j^{-1} U_i \notin \SO(3)$ for any choice of $i \neq j$. We do not consider any restriction on the connectedness of the wells, in particular some $U_i$ and $U_j$ may be rank-one connected. By polar decomposition, we can suppose that $U_1, \dots, U_l$ are symmetric and positive definite. Let $K = \cup_{i=1}^{l}\SO(3)U_i$. We denote by $W \colon \r^{3 \times 3} \to [0, +\infty]$ the elastic energy density. We assume $W$ to be Borel measurable and to satisfy the following hypotheses:
\begin{enumerate}[start=1,label={(W\arabic*)}]
    \item $W(F) = 0 \iff F \in K$, \label{item:minimization_W}
    \item $W$ is $C^2$ in a neighborhood of $K$, \label{item:regularity_W}
    \item $W$ is frame indifferent, i.e., $W(RF) = W(F)$ for every $R \in \SO(3)$ and for every $F \in \r^{3 \times 3}$, \label{item:frame_indifference}
    \item $W(F) \geq C f_q(\dist(F, K))$ for every $F \in \r^{3 \times 3}$, \label{item:lower_bound_W}
\end{enumerate}
where $C > 0$ and $f_q(t) = t^2 \land t^q$ and $q \in [0,2]$. Note that far away from the set $K$ the energy density may have linear or sublinear growth. We will denote by $Q_j$ the quadratic form $D^2W(U_j)$. 

For $h > 0$ we consider a thin plate $\Omega_h = S \times (-\frac{h}{2}, \frac{h}{2}) = S \times hI$, where $S \subset \r^2$ is an open, connected and bounded set with Lipschitz boundary and $I = (-\frac{1}{2}, \frac{1}{2})$. For some results we will need higher regularity of the boundary of $S$. More precisely, we will request that 

\begin{equation}\label{eq:boundary_condition}
	\begin{gathered}
		\text{there is a closed subset } \Sigma \subset \partial S \text{ with } \mathcal{H}^1(\Sigma) = 0 \text{ such that}\\
		 \text{the outer unit normal } \vec n \text{ to } S \text{ exists and is continuous on } \partial S \backslash \Sigma\,.
	\end{gathered}
\end{equation}
This property is called \textit{condition $(\ast)$} in \cite{HORNUNG2011}. We denote by $\Omega$ the rescaled plate, that is $\Omega = \Omega_1$.

Given $\alpha \in [2, +\infty)$, set $\gamma = \frac{\alpha}{2}$. We choose $p > 1$ and $\eta \colon (0, +\infty) \to (0, +\infty)$ such that for some constant $C > 0$:
\begin{enumerate}[start=1,label={(P\arabic*)}]
    \item $\eta(h) \geq C h^{\frac{\alpha}{3}}$ for every $h > 0$, \label{item:lower_bound_penalties} 
    \item $\eta(h) h^{\gamma(1-\frac{2}{p}) - 1} \to 0$ as $h \to 0$, \label{item:convergence_penalties}
    \item $p > \frac{6}{5}$, if $q < 2$. \label{item:bound_p}
\end{enumerate}
Conditions \ref{item:lower_bound_penalties} and \ref{item:bound_p} ensure that the penalty term is strong enough to provide suitable compactness estimates (see \Cref{prop:compactness}) whereas condition \ref{item:convergence_penalties} guarantees that the penalty term is negligible at the limit.
Note that \ref{item:lower_bound_penalties}--\ref{item:bound_p} are compatible, since for every $ \alpha \geq 2 $ we have $ 1-\gamma(1- \frac{p}{2}) < \frac{\alpha}{3} $ for $ p $ large enough. 

The symbol $\nabla_h y$ denotes the rescaled gradient of $y$, while $\nabla_h^2 y$ is the rescaled Hessian of $y$. They are defined as follows:
\begin{align}
	(\nabla_h y)_{ij} & = \begin{cases}
		\partial_j y_i & \text{if } j \neq 3\,,\\
		\frac{1}{h} \partial_3 y_i & \text{otherwise}\,,
	\end{cases}\\
	(\nabla_h^2 y)_{ijk} & = \begin{cases}
		\partial_{jk} y_i & \text{if } j,k \neq 3\,,\\
		\frac{1}{h} \partial_{3k} y_i & \text{if } j = 3\,,k \neq 3\,,\\
		\frac{1}{h} \partial_{j3} y_i & \text{if } j \neq 3\,,k = 3\,,\\
		\frac{1}{h^2} \partial_{33} y_i & \text{if } j = 3\,,k = 3\,.\\
	\end{cases}
\end{align}
We set 
\[
    \mathcal{H}_p(\Omega; \r^3) = \{y \in W^{1,2}(\Omega; \r^3) \colon \nabla^2_h y \in L^p(\Omega; \r^{3 \times 3 \times 3})\}\,,
\]
and for $\alpha \geq 2$ we define the $\alpha$-rescaled energy $E^\alpha_h \colon W^{1,2}(\Omega, \r^3) \to \r$ as follows:
\begin{equation}\label{eq:rescaled_energy}
    E^\alpha_h(y) = \begin{cases}
        \dfrac{1}{h^\alpha}\dint_\Omega W(\nabla_h y) \, dx + \dfrac{\eta^p(h)}{h^\alpha}\dint_\Omega |\nabla_h^2 y|^p \, dx \, \, & \text{if} \, y \in \mathcal{H}_p(\Omega; \r^3) \,,\\
        + \infty \,\, & \text{otherwise} \,.
    \end{cases}
\end{equation}

We explain now some useful matrix notation that we will use throughout the paper. Given a general matrix $A \in \r^{3 \times 3}$ we will write $A'$ to denote the $2 \times 2$ submatrix obtained removing the third row and column. Similarly, we will often write $x'$ in place of $(x_1, x_2)$ and $\nabla'$ instead of $\nabla_{x_1, x_2}$. We will use the super(sub)scripts to denote submatrices of $A$ in the following way: every missing subscript index is a removed row while every missing superscript index is a removed column. For example, $A^{1,2}$ is the $3 \times 2$ submatrix given by the first two columns of $A$ while $A_{1,2}$ is the $2 \times 3$ submatrix given by the first two rows of $A$. Whenever we will sum or multiply matrices and vectors with different dimension we will imply that the smaller one is naturally embedded in the bigger space by adding zeros in the missing entries. For example, if $F \in \r^{2 \times 2}$ and $G \in \r^{3 \times 3}$ the expression $F + G$ means $\iota(F) + G$ where
\[
    \iota: \r^{2 \times 2} \hookrightarrow \r^{3 \times 3} \,, \quad  F \mapsto \begin{pmatrix}
        F & 0 \\
        0 & 0
    \end{pmatrix} \,.
\]
We will denote by $I_h$ the matrix
\begin{equation}\label{eq:definition_Ih}
	I_h = \begin{pmatrix}
	1 & 0 & 0 \\
	0 & 1 & 0 \\
	0 & 0 & h
\end{pmatrix} \,.
\end{equation}

\subsection{Main results}

The main objective of this paper is to extend the rigorous derivation of a hierarchy of plate models obtained by Friesecke, James, and M\"uller in \cite{FRIESECKE2002, FRIESECKE2006} for a single well to the case of a multiwell elastic energy. In order to do so, we need to introduce the limiting models centered at a well $K_j$. We define
\begin{equation}\label{eq:definition_Q}
    \bar Q_j \colon \r^{2 \times 2} \to \r \quad F \mapsto \min_{a \in \r^3} Q_j(U_j^{-1}(\sym(F) + a \otimes e_3 + e_3 \otimes a)) \,.
\end{equation}
This is a quadratic form that generalizes the one defined in \cite[Equation (8)]{FRIESECKE2006}, taking into account that linearization may arise around a possibly nontrivial deformation gradient $U_j$. The minimum in $\bar Q_j$ exists thanks to some coercivity property of $Q_j$ that will be proved later (see \Cref{lemma:coercivity_second_derivative}).

\subsubsection{The Kirchhoff regime \texorpdfstring{$\bm{\alpha = 2}$}{\unichar{"03B1} = 2}}

For $\alpha = 2$, we will prove the $\Gamma$-convergence of $E_h^\alpha$ to the Kirchhoff functional
\begin{equation}\label{eq:kirchoff_love}
    E^{\KL}_j(y) = \dfrac{1}{24}\displaystyle\int_S \bar Q_j(\nabla y^T \nabla \nu) \, dx \,,
\end{equation}
where $y \in W^{2,2}(S; \r^3)$ satisfies the constraint $(\nabla y)^T \nabla y = (U_j^2)'$ and $\nu$ is the unique vector such that $(\nabla y, \nu)U_j^{-1} \in \SO(3)$ a.e. in $S$ (whose existence is guaranteed by \Cref{lemma:definition_nu}). Note that $y$ is an isometric immersion for the flat metric $(U^2_j)'$. The $\Gamma$-convergence result is stated in the following theorem.

\begin{teo}[$\Gamma$-convergence ($\alpha = 2$)]\label{teo:gamma_convergence_alpha_2}
    Suppose that $S$ satisfies \eqref{eq:boundary_condition}. 
    \begin{enumerate}[(i)]
    	\item For any sequence $(y_h) \subset W^{1,2}(\Omega; \r^3)$ that satisfies $E_h^2(y_h) \leq C$ for every $ h > 0 $ there exist $y \in W^{2,2}(S; \r^3)$ and $j \in \{1, \dots, l\}$ such that
    	\begin{enumerate}[(a)]
    		\item $\nabla y^T \nabla y = (U_j^2)'$, \label{item:isometry_condition}
    		\item $\nabla_h y_h \to (\nabla y, \nu)$ in $L^2(\Omega; \r^{3 \times 3})$ (up to a subsequence), where $\nu$ is defined as in \Cref{lemma:definition_nu}. \label{item:convergence_rescaled_gradient_compactness}
    	\end{enumerate} 
  		\label{item:compactness_alpha_2}
    	\item For any sequence $(y_h) \subset W^{1,2}(\Omega; \r^3)$ as in \ref{item:compactness_alpha_2} it holds
    	\[
    		\liminf\limits_{h \to 0} E_h^2(y_h) \geq E^{\KL}_j(y)\,.
    	\]\label{item:liminf_inequality_alpha_2}
    	\item For any $y \in W^{2,2}(S; \r^3)$ such that $\nabla y^T \nabla y = (U_j^2)'$ for some index $j \in \{1, \dots, l\}$, there exists a sequence $(y_h) \subset W^{1,2}(\Omega; \r^3)$ such that \ref{item:convergence_rescaled_gradient_compactness} holds true and
    	\[
    	\lim_{h \to 0} E_h^2(y_h) = E^{\KL}_j(y) \,.
    	\]\label{item:recovery sequence_alpha_2}
    \end{enumerate}
\end{teo}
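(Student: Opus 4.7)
The three items will be proved in order, with the bulk of the new work concentrated in the compactness step; once a single well has been selected, the remaining arguments closely parallel those of Friesecke--James--M\"uller \cite{FRIESECKE2002,FRIESECKE2006}, suitably adapted to the well $U_j$.

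For the compactness statement (i), the bound $E_h^2(y_h) \leq C$ gives $\int_\Omega W(\nabla_h y_h)\,dx \leq Ch^2$ and $\int_\Omega |\nabla_h^2 y_h|^p\,dx \leq C h^2/\eta^p(h)$. The first bound combined with \ref{item:lower_bound_W} controls $\dist(\nabla_h y_h, K)$; the second, together with the scaling assumptions \ref{item:lower_bound_penalties}--\ref{item:bound_p}, is precisely what is needed to invoke the well-selection result of \Cref{prop:compactness}: up to subsequences there is a single index $j \in \{1,\ldots,l\}$ such that $\|\dist(\nabla_h y_h, K_j)\|_{L^2(\Omega)} \leq Ch$. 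At this stage the FJM rigidity estimate (applied on a grid of squares of size $h$ to $\nabla_h y_h$, which is now $L^2$-close to the connected component $\SO(3)U_j$) yields piecewise constant rotations $R_h \in L^\infty(S;\SO(3))$ with $\|\nabla_h y_h - R_h U_j\|_{L^2} \leq Ch$ and $\|\nabla R_h\|_{L^2} \leq C$; extracting a further subsequence one obtains $R_h \to R$ strongly in $L^2$ with $R \in W^{1,2}(S;\SO(3))$. After subtracting constants and applying Poincar\'e, $y_h \to y$ in $W^{1,2}$, where $y$ is independent of $x_3$ and $\nabla y = (RU_j)^{1,2}$; in particular $\nabla y^T \nabla y = (U_j^2)'$ and \ref{item:convergence_rescaled_gradient_compactness} holds with $\nu = (RU_j)^3$, which coincides with the vector field of \Cref{lemma:definition_nu}.

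For the liminf inequality (ii), set $G_h = h^{-1}(R_h^T \nabla_h y_h\, U_j^{-1} - \Id)$, which is bounded in $L^2$ thanks to the rigidity estimate; up to subsequences $G_h \todeb G$ weakly in $L^2$. Using frame indifference \ref{item:frame_indifference} and a truncation argument on the set $\{|hG_h|\leq\delta\}$ where $W$ is quadratic by \ref{item:regularity_W}, one expands $h^{-2}W(\nabla_h y_h) = \tfrac12 Q_j(U_j G_h) + o(1)$ on the good set and discards the bad set, obtaining $\liminf h^{-2}\int_\Omega W(\nabla_h y_h)\,dx \geq \tfrac12 \int_\Omega Q_j(U_j G)\,dx$. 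Passing to the limit in the identity $(\nabla_h y_h)^T \nabla_h y_h = U_j^2 + 2h U_j \sym(G_h) U_j + O(h^2)$, one identifies the $2\times 2$ in-plane part of $U_j(\sym G)U_j$ as $-x_3 (\nabla y)^T \nabla \nu$, while the third-column entries of $G$ are free. Minimizing pointwise in the free entries, integrating in $x_3\in I$ and using the definition \eqref{eq:definition_Q} of $\bar Q_j$ yields $\tfrac{1}{24}\int_S \bar Q_j(\nabla y^T \nabla \nu)\,dx = E^{\KL}_j(y)$.

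For the recovery sequence (iii), truncation is forbidden because of the penalty term, so one first approximates. Writing the isometric immersion condition $(\nabla y)^T \nabla y = (U_j^2)'$ in the variables $\tilde x = U_j^{-1} x$ reduces to the classical Euclidean case, so by the density results of Pakzad/Hornung (which require \eqref{eq:boundary_condition}) one finds smooth $y_\varepsilon \in C^2(\bar S;\r^3)$ still satisfying the metric constraint, with $y_\varepsilon \to y$ in $W^{2,2}$. For each $y_\varepsilon$ set $\nu_\varepsilon$ accordingly, let $a_\varepsilon(x')$ realize the pointwise minimum in \eqref{eq:definition_Q} at $F = \nabla y_\varepsilon^T \nabla \nu_\varepsilon$, and define
\[
y_h^\varepsilon(x) = y_\varepsilon(x') + h x_3 \nu_\varepsilon(x') + h^2 \bigl(\tfrac{x_3^2}{2} b_\varepsilon(x') + x_3 c_\varepsilon(x')\bigr),
\]
where $b_\varepsilon, c_\varepsilon$ are chosen so that $R_h^T \nabla_h y_h^\varepsilon U_j^{-1} = \Id + h G_\varepsilon + O(h^2)$ for an $L^\infty$ field $G_\varepsilon$ whose structure matches the optimum in $\bar Q_j$. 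A Taylor expansion of $W$ in a neighborhood of $U_j$ gives $h^{-2}\int_\Omega W(\nabla_h y_h^\varepsilon)\,dx \to E^{\KL}_j(y_\varepsilon)$, while the penalty term is $O(\eta^p(h) h^{-2})$, which vanishes by \ref{item:convergence_penalties} since $\gamma = 1$. A diagonal argument in $\varepsilon$ then produces the desired recovery sequence.

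The main obstacle is the compactness step (i): in a genuinely multiwell setting, nothing a priori prevents $\nabla_h y_h$ from oscillating across rank-one connected wells, and standard rigidity fails. The whole point of the perturbation $\eta^p(h) h^{-1}\int|\nabla_h^2 y_h|^p$ and the scaling conditions \ref{item:lower_bound_penalties}--\ref{item:bound_p} is to make such oscillations energetically forbidden while remaining negligible at the limit; this is what \Cref{prop:compactness} is designed to deliver, and all subsequent arguments reduce to the single-well case around the selected $U_j$.
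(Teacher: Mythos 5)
Your items (i) and (iii) follow essentially the paper's own route: compactness via \Cref{prop:compactness} to select a single well $K_j$, then the rigidity estimate adapted to $\SO(3)U_j$ (\Cref{prop:approximated_rotation}) plus Poincar\'e to get $y$, and for the recovery sequence the ansatz $y+hx_3\nu+\tfrac{h^2x_3^2}{2}\xi$ with $\xi$ chosen through the minimizer in \eqref{eq:definition_Q}, combined with Hornung's density theorem after an affine change of variables and a diagonal argument. Minor imprecisions there: a piecewise constant $R_h$ cannot itself satisfy $\|\nabla R_h\|_{L^2}\le C$ (you need the pair $R_h,\tilde R_h$ of \Cref{prop:approximated_rotation}); the reduction to the Euclidean isometry problem uses the $2\times 2$ change of variables $x'\mapsto A^{-1}x'$ with $A=\sqrt{(U_j^2)'}$, not $\tilde x=U_j^{-1}x$; and the diagonal argument additionally needs continuity of $E^{\KL}_j$ along $W^{2,2}$-convergent isometric immersions (the paper proves this via a.e.\ convergence of $\nu$), which you should at least record.

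The genuine gap is in your step (ii), in the identification of the limiting strain. With your normalization $\nabla_h y_h=R_h(\Id+hG_h)U_j$, the identity $(\nabla_h y_h)^T\nabla_h y_h=U_j^2+2hU_j\sym(G_h)U_j+h^2U_jG_h^TG_hU_j$ holds pointwise by algebra, so ``passing to the limit'' in it merely restates that the weak limit of $h^{-1}\big[(\nabla_h y_h)^T\nabla_h y_h-U_j^2\big]$ equals $2U_j\sym(G)U_j$; it gives no independent evaluation of that limit, because you have no rate for $\nabla' y_h-\nabla y$ (only $R_h\to R$ without quantitative control), so the argument is circular. Moreover the claimed identification is false as stated: the in-plane part of $\sym(U_jG)$ is $\sym(U_jG_0)'+x_3\,\nabla y^T\nabla\nu$ with a nontrivial $x_3$-independent field $G_0$ (and the linear coefficient carries a $+$ sign, harmless for the quadratic form but symptomatic). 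What the liminf actually needs is that $G^{1,2}$ is affine in $x_3$ and that $(U_jG_1)'=\nabla y^T\nabla\nu$; in the paper this is \Cref{cor:convergence_G_alpha_2} and it is obtained by the difference-quotient argument in $x_3$: one writes $R_hH^s_h$ as an $x'$-derivative of $\int_0^s\frac1h\partial_3 y_h\,d\sigma$, passes to the limit in $(W^{1,2}_0(\Omega))^\ast$ using $\frac1h\partial_3 y_h\to\nu$, and only then concludes by dropping the $G_0$-contribution and the $x_3$-mean-zero cross term before minimizing over the third column to produce $\bar Q_j$ and the factor $\tfrac1{24}$. Without this step (or an equivalent substitute) your chain of inequalities does not close, since the quantity $\nabla y^T\nabla\nu$ never enters the estimate.
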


\subsubsection{The Von K\'arm\'an regimes \texorpdfstring{$\bm{\alpha > 2}$}{\unichar{"03B1} > 2}}\label{subsection:von_karman}

When $\alpha > 2$, the limiting models are expressed in terms of $u$ and $v$, the limits of the rescaled in-plane and out-of-plane averaged displacements around a well $K_j$, called $u_h$ and $v_h$, respectively. They are defined as follows:
\begin{align}
    & u_h \colon S \to \r^2 && x' \mapsto \min\left\{h^{-\gamma}, h^{2-2 \gamma}\right\}\begin{pmatrix}
         w_h(x') \cdot U_j e_1\\
         w_h(x') \cdot U_j e_2 
    \end{pmatrix}\,, \label{eq:definition_u}\\
    & v_h \colon S \to \r && x' \mapsto h^{1-\gamma} w_h(x') \cdot U_j e_3 \,, \label{eq:definition_v}
\end{align}
where we recall that $\gamma = \frac{\alpha}{2}$ and
\[
    w_h \colon S \to \r^3 \quad x' \mapsto \int_I (y_h - U_j \Id_h x)\, dx_3 \,.
\]
Observe that $u_h$ and $v_h$ are the components of the averaged displacement $w_h$ in the basis given by $\{U_j^{-1}e_1, U_j^{-1}e_2, U_j^{-1}e_3\}$. This may not look as a natural choice, since a basis of tangent vectors to the embedded midplane $U_j(S \times \{0\})$ is given by $\{U_je_1, U_je_2\}$ and the normal direction is given by $U^{-1}_j e_3$.
However, since $ \{U_{j}^{-1}e_{i} : i = 1, 2, 3\} $ is the dual basis of $ \{U_{j}e_{i} : i = 1, 2, 3 \} $, this alternative simplifies both the statement and the computations, and gives a completely equivalent result (see \Cref{remark:basis_u_v}). When $2 < \alpha < 4$, we retrieve the $\Gamma$-convergence to the constrained Von K\'arm\'an functional, namely,
\[
    E^{\CVK}_j(v) = \dfrac{1}{24} \displaystyle\int_S \bar Q_j(\nabla^2 v) \, dx \,,
\]
whenever $v \in W^{2,2}(S)$ is such that there exists $u \in W^{1,2}(S; \r^2)$ that solves
\begin{equation}
	\nabla u^T + \nabla u + |U^{-1}_j e_3|^2 \nabla v \otimes \nabla v = 0 \,. \label{eq:constraint_u_v}
\end{equation}
As in the single well case, this constraint means that $u$ and $v$ satisfy a matching isometry condition up to the second order; however, the metric now depends on the well, and it is not necessarily the Euclidean one. More precisely, if one defines for $\varepsilon > 0$
\[
	y_\varepsilon = U_j \begin{pmatrix}
		x'\\ 0
	\end{pmatrix} + \varepsilon U_j^{-1}e_3 v + \varepsilon^2 U_j^{-1} \begin{pmatrix}
		u \\ 0
	\end{pmatrix} \,,
\]
then \eqref{eq:constraint_u_v} is equivalent to $\nabla y_\varepsilon^T \nabla y_\varepsilon = (U_j^2)' + O(\varepsilon^3)$.

\begin{remark}\label{remark:determinant_zero}
    If $S$ is simply connected, equation \eqref{eq:constraint_u_v} implies in particular that $\det(\nabla^2 v) = 0$. For a proof of this result see  \cite[Proposition 9]{FRIESECKE2006}. 
\end{remark}

The case $\alpha = 4$ corresponds to the Von K\'arm\'an model, that is,
\begin{align}
    E^{\VK}_j(u, v) & =
        \dfrac{1}{24}\displaystyle\int_S \bar Q_j(\nabla^2 v) \, dx \\
        & \hphantom{=} \, \, + \dfrac{1}{8}\displaystyle\int_S \bar Q_j(\nabla u^T + \nabla u + |U^{-1}_je_3|^2 \nabla v \otimes \nabla v) \, dx\,,
\end{align}
for $v \in W^{2,2}(S)$ and $u \in W^{1,2}(S; \r^2)$.

Lastly, if $\alpha > 4$, we prove the $\Gamma$-convergence to the linearized Von K\'arm\'an model:
\[
    E^{\LVK}_j(u, v) = \dfrac{1}{24} \displaystyle\int_S \bar Q_j(\nabla^2 v) \, dx + \dfrac{1}{8}\displaystyle\int_S \bar Q_j(\nabla u^T + \nabla u) \, dx \,,
\]
for $v \in W^{2,2}(S)$ and $u \in W^{1,2}(S; \r^2)$. The following theorem summarizes these $\Gamma$-convergence results.

\begin{teo}[$\Gamma$-convergence ($\alpha > 2$)]\label{teo:gamma_convergence}
	Suppose $\alpha > 2$.
	\begin{enumerate}[(i)]
		\item For any sequence $(y_h) \subset W^{1,2}(\Omega; \r^3)$ that satisfies $E_h^\alpha(y_h) \leq C$ for every $h > 0$ there exist an index $j \in \{1, \dots, l\}$, two sequences $(\bar R_h) \subset \SO(3)$, $(c_h) \subset \r$, and two maps $v \in W^{2,2}(S)\,, u \in W^{1,2}(S; \r^2)$ such that, up to a subsequence, the following convergences hold:
		\begin{enumerate}[(a)]
			\item $u_h \todeb u$ in $W^{1,2}(S; \r^2)$, \label{item:convergence_u}
			\item $v_h \to v$ in $W^{1,2}(S)$, \label{item:convergence_v}
		\end{enumerate}
		where $u_h$ and $v_h$ are, respectively, the in-plane and out-of-plane displacements around the well $K_j$ defined as in \eqref{eq:definition_u}--\eqref{eq:definition_v} for the roto-translated deformation
		\[
		\tilde y_h = \bar R_h y_h + c_h\,.
		\] 
		Moreover, if $2 < \alpha < 4$, then
		\begin{equation} \label{eq:isometry_constraint}
				\nabla u^T + \nabla u + |U^{-1}_j e_3|^2 \nabla v \otimes \nabla v = 0 \,.
		\end{equation} \label{item:compactness}
		\item For any sequence $(y_h) \subset W^{1,2}(\Omega; \r^3)$ as in \ref{item:compactness}
		\begin{enumerate}[(a)]
			\item if $2 < \alpha < 4$, then $ \liminf\limits_{h \to 0} E_h^\alpha(y_h) \geq E_j^{\CVK}(v)$,
			\item if $\alpha = 4$, then $ \liminf\limits_{h \to 0} E_h^\alpha(y_h) \geq E_j^{\VK}(u, v)$,
			\item if $\alpha > 4$, then $ \liminf\limits_{h \to 0} E_h^\alpha(y_h) \geq E_j^{\LVK}(u, v)$.
		\end{enumerate}\label{item:liminf_inequality}
	\item Suppose that $2 < \alpha < 4$ and that $S$ is simply connected and satisfies \eqref{eq:boundary_condition}. For any choice of $j \in \{1, \dots, l\}$ and $v \in W^{2,2}(S)$ such that there exists $u \in W^{1, 2}(S; \r^2)$ solving \eqref{eq:isometry_constraint}, there exists a sequence $(y_h) \subset W^{1,2}(\Omega; \r^3)$ such that
	\begin{enumerate}[(a)]
		\item $v_h \to v$ in $W^{1, 2}(S)$, where $v_h$ is defined as in \eqref{eq:definition_v},
		\item $\lim\limits_{h \to 0} E_h^\alpha(y_h) = E^{\CVK}(v)$.
	\end{enumerate}\label{item:recovery_sequence_2_alpha_4}
	\item For any choice of $j \in \{1, \dots, l\}$, $u \in W^{1,2}(S; \r^2)$, and $v \in W^{2,2}(S)$ there exists a sequence $(y_h) \subset W^{1,2}(\Omega; \r^3)$ such that, defining $u_h \,, v_h$ as in \eqref{eq:definition_u}-\eqref{eq:definition_v},
	\begin{enumerate}[(a)]
		\item $v_h \to v$ in $W^{1,2}(S)$,
		\item $u_h \todeb u$ in $W^{1, 2}(S; \r^2)$,
		\item $\lim\limits_{h \to 0} E_h^\alpha(y_h) = E^{\VK}(u, v)$ if $\alpha = 4$, \label{item:recovery_sequence_alpha_4}
		\item $\lim\limits_{h \to 0} E_h^\alpha(y_h) = E^{\LVK}(u, v)$ if $\alpha > 4$. \label{item:recovery_sequence_alpha_greater_4}
	\end{enumerate}\label{item:recovery_sequence_alpha_greater_equal_4}
	\end{enumerate}
\end{teo}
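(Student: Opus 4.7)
The plan is to adapt the strategy of \cite{FRIESECKE2002,FRIESECKE2006} to the multi-well setting, using the compactness machinery already developed in \Cref{section:compactness} to reduce the analysis to a neighborhood of a single well $K_j$, and then handle the recovery sequences via approximation results rather than truncation.

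\textbf{Compactness (i).} First I would invoke the compactness result of \Cref{section:compactness}: the bound $E_h^\alpha(y_h)\le C$ together with \ref{item:lower_bound_penalties} forces $\nabla_h y_h$ to concentrate on a single well $K_j$, so that the usual rigidity estimate applies and yields a piecewise-constant rotation field $R_h$ with $\|\nabla_h y_h - R_h U_j\|_{L^2} \lesssim h^\gamma$. I would then choose $\bar R_h\in \SO(3)$ as the average/approximate value of $R_h$ and subtract a translation $c_h$ to zero out the in-plane average of $\tilde y_h=\bar R_h y_h+c_h$; the rescaled in-plane and out-of-plane averaged displacements $u_h,v_h$ defined by \eqref{eq:definition_u}--\eqref{eq:definition_v} are then controlled in $W^{1,2}$ via Poincaré and the rigidity-based expansion, yielding the weak/strong limits $u,v$ in (a)--(b). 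For $2<\alpha<4$ the isometry constraint \eqref{eq:isometry_constraint} follows by expanding $(\nabla \tilde y_h)^T\nabla \tilde y_h-(U_j^2)'$ in powers of $h$ and identifying the leading-order term, exactly as in \cite{FRIESECKE2006} but with the prefactor $|U_j^{-1}e_3|^2$ coming from the change of basis $\{U_j^{-1}e_i\}$.

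\textbf{Liminf (ii).} The penalty term is nonnegative, so it can be dropped. For the elastic part, I would use frame indifference \ref{item:frame_indifference} to replace $W(\nabla_h y_h)$ by $W(R_h^T\nabla_h y_h)$, then Taylor-expand $W$ at $U_j$ using \ref{item:regularity_W}, writing
\begin{equation}
W(R_h^T\nabla_h y_h)=\tfrac12 Q_j(G_h)+o(|G_h|^2),\qquad G_h=\tfrac{R_h^T\nabla_h y_h-U_j}{h^\gamma}.
\end{equation}
A standard truncation of $G_h$ on the good set where $|G_h|\le h^{-\gamma/2}$ removes the region of large strain (controlled by \ref{item:lower_bound_W}). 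One then identifies the weak $L^2$-limit $G$ of $G_h$ in terms of $\nabla^2 v$ (for $\alpha=4$ also $\sym\nabla u+\tfrac12|U_j^{-1}e_3|^2\nabla v\otimes\nabla v$, and for $\alpha>4$ just $\sym\nabla u$), with an undetermined third-column contribution absorbed precisely by the minimization in $a$ defining $\bar Q_j$ in \eqref{eq:definition_Q}. Weak lower semicontinuity of the convex quadratic form and integration in $x_3$ produce the factors $\tfrac{1}{24}$ and $\tfrac18$.

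\textbf{Recovery sequences (iii)--(iv).} For the Von K\'arm\'an and linearized Von K\'arm\'an regimes $\alpha\ge 4$, I would take the explicit ansatz
\begin{equation}
y_h(x)=U_j\begin{pmatrix}x'\\0\end{pmatrix}+h^{\gamma-1}U_j^{-1}e_3\, v(x')+h^{2\gamma-2}U_j^{-1}\begin{pmatrix}u(x')\\0\end{pmatrix}-h^\gamma x_3 U_j^{-1}e_3(\nabla v,0)+h^\gamma x_3\, b(x'),
\end{equation}
where $b$ is the optimal third-column corrector realizing the minimum in \eqref{eq:definition_Q} evaluated at the appropriate target matrix; a density argument lets us first assume $u,v$ smooth so that $\nabla_h^2 y_h\in L^p$ and the penalty $\eta^p(h)h^{-\alpha}\|\nabla_h^2 y_h\|_p^p$ vanishes thanks to \ref{item:convergence_penalties}, then recover general $u,v$ by diagonal extraction since the limit functional is continuous on $W^{1,2}\times W^{2,2}$. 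For the constrained regime $2<\alpha<4$ (part (iii)) the main obstacle is that the standard Lipschitz-truncation used in \cite{FRIESECKE2006} destroys the $L^p$-bound on $\nabla_h^2 y_h$ required by the penalty term. I would instead approximate $v\in W^{2,2}(S)$ satisfying \eqref{eq:isometry_constraint} by smoother $v_n$ still satisfying (or approximately satisfying) the constraint; this is where I expect to use simple connectedness of $S$ together with the boundary regularity \eqref{eq:boundary_condition} to invoke approximation results for matching-isometry pairs (presumably the content of \Cref{app:appendix}), producing $v_n\in W^{2,\infty}(S)$ with associated $u_n$, and then use the ansatz above with smooth data $(u_n,v_n)$ and a diagonal argument to conclude. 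Verifying that the penalty term stays negligible along this diagonal is the delicate point, and it is precisely the reason for the scaling assumption \ref{item:convergence_penalties}.
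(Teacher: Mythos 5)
Your plan follows the paper's strategy in all three parts (compactness via multi-well estimate plus rigidity, liminf via Taylor expansion and weak lower semicontinuity with the third-column minimization absorbed by $\bar Q_j$, recovery via explicit ansatz plus approximation instead of truncation), and the high-level structure is correct. There is, however, one concrete gap in the compactness step, and one imprecision in the recovery ansatz that would not survive scrutiny as written.

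\textbf{Compactness of $u_h$.} You claim that $u_h$ is ``controlled in $W^{1,2}$ via Poincar\'e and the rigidity-based expansion.'' This does not work: the energy bound and rigidity estimate give control of $\sym(\nabla u_h)$ in $L^2$, but say nothing directly about $\skw(\nabla u_h)$, and Poincar\'e--Wirtinger cannot upgrade a bound on the symmetric gradient to a bound on the full gradient. What is needed is Korn's inequality, and Korn requires in addition some normalization of the skew part. In the paper this is precisely the reason for the two-stage construction of $\bar R_h$: one first takes the constant rotation $Q_h$ from rigidity, and then modifies it by $P_h$ obtained from \Cref{lemma:rotation_to_skew_matrix} so that $\int_\Omega \skw(\nabla_h \tilde y_h U_j^{-1})\,dx = 0$ (condition \eqref{eq:korn_condition}). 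Only with this normalization does one get $\int_S \skw(\nabla u_h)\,dx = 0$, which is what makes Korn's inequality applicable and closes the argument. Choosing $\bar R_h$ merely as ``the average/approximate value of $R_h$'' omits this step, and the bound on $u_h$ would not follow.

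\textbf{Recovery ansatz for $\alpha\ge4$.} Your ansatz uses the scaling $h^{2\gamma-2}$ on the in-plane displacement $u$, which agrees with $h^\gamma$ only when $\gamma=2$; for $\alpha>4$ (i.e.\ $\gamma>2$) one has $\min\{h^{-\gamma},h^{2-2\gamma}\}=h^{-\gamma}$ in \eqref{eq:definition_u}, so the correct scaling is $h^\gamma$. Also, a single corrector $b(x')$ multiplied by $h^\gamma x_3$ only realizes the pointwise minimum defining $\bar Q_j$ for the $x_3$-linear (bending) part of the target strain; one also needs an $x_3$-independent third-column corrector (the paper's $\zeta$, giving an $x_3$-linear term in $B_h$) to realize the minimum for the membrane part $\sym(\nabla u)$, and correspondingly an $\tfrac{1}{2}x_3^2$ term for $\xi$. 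Since $L_j$ is linear, both can be combined into an affine-in-$x_3$ corrector, but this needs to be stated. Finally, the reference deformation should be $U_j(x',hx_3)^T$, not $U_j(x',0)^T$; otherwise the third column of $\nabla_h y_h$ is off by its leading-order contribution.
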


\begin{remark}
    In the proof of \Cref{teo:gamma_convergence}--\ref{item:recovery_sequence_2_alpha_4} we cannot use the truncation argument of \cite{FRIESECKE2002,FRIESECKE2006}.
    Indeed, the penalty term in the energy requires higher regularity.
    To overcome this issue we suppose that $ S $ satisfies \eqref{eq:boundary_condition}, so that we can apply the approximation result given in \cite{HORNUNG2011}. 
\end{remark}

\subsubsection{Convergence of minimizers in the presence of dead loads}

External forces can be included in the previous analysis. We suppose $q > 1$, and we study the convergence of minimizers of the rescaled total energy
\begin{equation}\label{eq:definition_total-energy}
	J_h^\alpha \colon W^{1, 2}(\Omega; \r^3) \to \r \cup \{+ \infty\}\,, \quad J_h^\alpha(y_h) = E_h^\alpha(y_h) - \dfrac{1}{h^\alpha}\int_\Omega f_h \cdot y_h \, dx \,,
\end{equation}
where $f_h \colon S \to \r^3$ is a sequence of dead loads that satisfies
\begin{equation}\label{eq:convergence_forces}
	\dfrac{1}{h^{\gamma+1}}f_h \to f \quad \text{in} \, \, L^{q'}(S; \r^3) \,.
\end{equation}
Here $q'$ is the conjugate exponent of $q$. Note that $q' \geq 2$, so that the strong convergence of the forces holds also in $L^2(S; \r^3)$. We assume the forces to be mean-free, i.e.,
\[
	\int_S f_h \, dx = 0\,,
\]
otherwise the infimum of $J_h^\alpha$ is $-\infty$. We prove the following result.

\begin{teo}\label{teo:convergence_minimizers_alpha_2}
	Suppose that $S$ satisfies \eqref{eq:boundary_condition} and $q > 1$. Suppose that $(y_h) \subset W^{1,2}(\Omega; \r^3)$ is a sequence of deformations that are quasi-minimizers for $J_h^2$, i.e.,
	\[
		\limsup_{h \to 0} \, (J_h^2(y_h) - \inf J_h^2) =0 \,.
	\]
	Then, $E_h^2(y_h) \leq C$ for every $h > 0$ and there exist $y \in W^{2, 2}(S; \r^3)$ and $j \in \{1, \dots, l\}$ satisfying $(\nabla y)^T\nabla y = (U_j^2)'$ such that, up to subsequences, $y_h \to y$ in $W^{1, 2}(\Omega; \r^3)$ and $(j, y)$ minimizes
	\[
		J^{\KL}_j(y) = E^{\KL}_j(y) - \int_S f \cdot y \, dx \,,
	\]
	over the set
	\[
		\{(j, y) \in \{1, \dots, l\} \times W^{2, 2}(S; \r^3) \colon \nabla y^T \nabla y = (U_j^2)'\}\,.
	\]
\end{teo}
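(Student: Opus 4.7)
The plan is to combine quasi-minimality with the $\Gamma$-convergence and compactness results of \Cref{teo:gamma_convergence_alpha_2}, handling carefully the translational invariance induced by the mean-free hypothesis on $f_h$. There are no rotations to track, since the limit space $\{\nabla y^T\nabla y = (U_j^2)'\}$ is already invariant under composition with rigid motions, so the optimal-rotation selection needed for the Von K\'arm\'an regimes of \Cref{teo:convergence_minimizers} is not required here.

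\textbf{Energy bound.} First I would show $J_h^2(y_h) \le C$ by testing quasi-minimality against the competitor $z_h(x) = U_1 I_h x$, which satisfies $\nabla_h z_h = U_1 \in K$ and $\nabla_h^2 z_h = 0$ so $E_h^2(z_h) = 0$, and whose force work converges to $-\int_S f\cdot U_1(x',0)\,dx'$ by \eqref{eq:convergence_forces}. To upgrade this to $E_h^2(y_h) \le C$, one controls the force work by the energy. Introducing the averaged deformation $w_h(x') := \int_I y_h\, dx_3$ and its mean $\bar w_h := \frac{1}{|S|}\int_S w_h\,dx'$, mean-freeness of $f_h$ gives $\int_\Omega f_h \cdot y_h\,dx = \int_S f_h \cdot (w_h - \bar w_h)\,dx'$, so by Poincar\'e, Sobolev embedding $W^{1,2}(S) \hookrightarrow L^q(S)$ (valid on the planar set $S$ for every $q<\infty$), and H\"older,
\[
    \left|\frac{1}{h^2}\int_\Omega f_h \cdot y_h\, dx\right| \le C \left\|\frac{f_h}{h^2}\right\|_{L^{q'}(S)} \|\nabla_h y_h\|_{L^2(\Omega)}\,.
\]
The compactness estimates of \Cref{section:compactness} give $\|\nabla_h y_h\|_{L^2}^2 \le C + C h^2 E_h^2(y_h)$ (the sub-quadratic behaviour allowed by \ref{item:lower_bound_W} is handled there via the regularity supplied by the penalty $\eta^p\int|\nabla_h^2 y_h|^p$); a Young absorption argument then yields $E_h^2(y_h) \le C$.

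\textbf{Compactness and identification of the limit.} With the energy bound in place, \Cref{teo:gamma_convergence_alpha_2}\ref{item:compactness_alpha_2} produces an index $j$ and $y \in W^{2,2}(S;\r^3)$ with $\nabla y^T\nabla y = (U_j^2)'$ and $\nabla_h y_h \to (\nabla y, \nu)$ in $L^2(\Omega;\r^{3\times 3})$; in particular $\nabla y_h \to \nabla y$ in $L^2(\Omega)$ (viewing $y$ as $x_3$-independent), since the third column $\partial_3 y_h = h(\nabla_h y_h)_{i3}$ is of order $h$. Replacing $y_h$ by the constant-translated quasi-minimizer $y_h - \bar y_h + \frac{1}{|S|}\int_S y\,dx'$ (admissible because $J_h^2$ is invariant under constant translations thanks to $\int_S f_h\,dx' = 0$) and combining Poincar\'e with Rellich's theorem upgrades this to $y_h \to y$ strongly in $W^{1,2}(\Omega;\r^3)$, up to a subsequence. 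By Sobolev embedding in the planar set $S$, the same argument gives $w_h \to y$ in every $L^q(S)$, $q<\infty$, which combined with $f_h/h^2 \to f$ in $L^{q'}(S)$ makes the force work continuous along the convergence.

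\textbf{Minimality and main obstacle.} The liminf inequality in \Cref{teo:gamma_convergence_alpha_2}\ref{item:liminf_inequality_alpha_2} together with this continuity yields $\liminf_{h\to 0} J_h^2(y_h) \ge E_j^{\KL}(y) - \int_S f\cdot y\,dx' = J_j^{\KL}(y)$. For any admissible competitor $(j',y')$, the recovery sequence from \Cref{teo:gamma_convergence_alpha_2}\ref{item:recovery sequence_alpha_2} has energy tending to $E_{j'}^{\KL}(y')$ and, by the same Poincar\'e--Sobolev argument applied to its averaged deformation, force work tending to $\int_S f\cdot y'\,dx'$; quasi-minimality then gives $J_j^{\KL}(y) \le \limsup_{h\to 0} J_h^2(y_h) \le J_{j'}^{\KL}(y')$, so $(j,y)$ is a minimizer. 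The only delicate step is the energy bound: without clamped boundary conditions the force work could a priori dominate the elastic energy, and the absorption argument is nontrivial because \ref{item:lower_bound_W} allows $W$ to grow only sub-quadratically far from $K$, so the standard estimate $\|\nabla_h y_h\|_{L^2}^2 \lesssim \int W$ fails in general; it is precisely the multi-well compactness machinery of \Cref{section:compactness}, leveraging the higher regularity provided by the penalty, that makes this step work.
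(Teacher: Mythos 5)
Your compactness-and-identification step and the minimality argument via liminf inequality and recovery sequences track the paper's proof of \Cref{teo:convergence_minimizers_alpha_2} closely (the paper likewise translates $y_h$ by its mean, uses Poincar\'e--Wirtinger to pass to $W^{1,2}$ convergence, and sandwiches $J_j^{\KL}(y)$ between the liminf along $y_h$ and the limsup along a recovery sequence for an arbitrary competitor). The problem is the energy bound, which is exactly where you flag a difficulty but do not actually resolve it.

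You claim that ``the compactness estimates of \Cref{section:compactness}'' give $\|\nabla_h y_h\|_{L^2}^2 \le C + C h^2 E_h^2(y_h)$. But the relevant results (\Cref{prop:compactness}, \Cref{cor:compactness}) are stated under the hypothesis $h^\alpha E_h^\alpha(y_h) \to 0$, and it is precisely this hypothesis that must be established \emph{first}, as a separate step, before the energy bound can be closed --- this is how the paper structures \Cref{lemma:compactness_forces}. Your absorption argument is therefore circular: you invoke a conclusion of \Cref{prop:compactness} to bound the force work, but that conclusion only becomes available after one already knows $h^2 E_h^2(y_h) \to 0$. Moreover, with $q < 2$ allowed in \ref{item:lower_bound_W}, one cannot go from the energy to $\|\nabla_h y_h\|_{L^2}$ at all without first identifying a single well via \Cref{prop:compactness}: far from $K$ the energy only controls $\dist^q$. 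The paper sidesteps this by estimating the force work through $\|\nabla_h \tilde y_h\|_{L^q(\Omega)}$ (where $\tilde y_h$ subtracts off a rigid motion $Q_h U_j I_h x$), using the $L^q$ version of the rigidity estimate from \Cref{remark:approximated_rotation_Lr} together with the growth condition \ref{item:lower_bound_W} in the form $\dist^q \le C W$ on the region $\{\dist \ge 1\}$, and only then closing a Young's inequality in the exponent pair $(q, q')$ to obtain $h^2 E_h^2(y_h) \to 0$. Without this $L^q$ rigidity step your proof has a genuine gap: the key a priori estimate $h^2 E_h^2(y_h) \to 0$ is never shown. Once that is in place, the second Young absorption you sketch (now legitimately invoking \Cref{cor:compactness} to bound $\|\dist(\nabla_h y_h, K_{i_h})\|_{L^2}$) does finish the argument, as in the second part of the paper's \Cref{lemma:compactness_forces}.
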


For the case $\alpha >  2$ we need to introduce some notation. We denote by $\mathcal{M}_h \subset K$ the set of maximizers of the functional
\[
F_h \colon K \to \r, \quad RU_j \mapsto \int_S f_h \cdot RU_j \begin{pmatrix}
x' \\ 0
\end{pmatrix} \, dx \,.
\]
Note that $\mathcal{M}_h$ is not empty by compactness of $K$. Similarly, we define 
\[	
\mathcal{M} = \argmax_{K} F \,,
\]
where
\[
	F(A) = \int_S f \cdot A \begin{pmatrix}
	x' \\ 0
	\end{pmatrix} \, dx \,.
\]
We define the sets $\mathcal{R}^j$ and $\mathcal{R}_h^j$ as follows
\begin{align}
	\mathcal{R}^j & = \argmax_{R \in \SO(3)} F(RU_j)\,,\\
	\mathcal{R}_h^j & = \argmax_{R \in \SO(3)} F_h(RU_j)\,.
\end{align}
By definition of $K$, there are subsets of indices $\Lambda_h, \Lambda \subset \{1, \dots, l\}$ such that $\mathcal{M}_h = \bigcup_{j \in \Lambda_h} \mathcal{R}_h^jU_j$
and $\mathcal{M} = \bigcup_{j \in \Lambda} \mathcal{R}^jU_j$.
These definitions generalize to our context the notion of optimal rotations introduced in \cite{MAOR2021}. One can prove that $\mathcal{R}_h^j$ and $\mathcal{R}^j$ are closed, connected, boundaryless, and totally geodesic submanifolds of $\SO(3)$. Indeed, the proof in \cite{MAOR2021} does not rely on the specific structure of $F$ or $F_h$, but only on their linearity. We denote by $T \mathcal{R}^j_R$ and $N \mathcal{R}^j_R$ the tangent space and the normal space to $\mathcal{R}^j$ at the point $R$, respectively. By \cite[Proposition 4.1]{MAOR2021}, we have
\begin{align}
	T\mathcal{R}^j_R &= \{RW \in \r^{3 \times 3} \colon W \in \r^{3 \times 3}_{\skw}\,, F(RW^2U_j) = 0\}\,, \\
	N\mathcal{R}^j_R &= \{RW \in \r^{3 \times 3} \colon W \in \r^{3 \times 3}_{\skw}\,, RW \perp T\mathcal{R}^j_R\} \,. 
\end{align}
Similarly, we define $T \mathcal{R}_h^j{}_R$ and $N \mathcal{R}_h^j{}_R$. We can define the projection operators $P^j$ and $P_j^h$ of $\SO(3)$ onto $\mathcal{R}^j$ and $\mathcal{R}_h^j$, respectively. These projections have to be understood with respect to the intrinsic distance of $\SO(3)$, i.e.,
\[
	\dist_{\SO(3)}(R, Q) = \min\left\{|W| \colon W \in \r^{3 \times 3}_{\skw}\,, Q = Re^W\right\} \,,
\]
and are well-defined at least in a neighborhood of $\mathcal{R}^j$ and $\mathcal{R}_h^j$, respectively. We will suppose that the forces $f_h$ are such that 
\begin{enumerate}[start=1,label={(F\arabic*)}]
	\item $\Lambda_h = \Lambda$ for $h \ll 1$, \label{item:lambda_h_equals_lambda}
	\item $\dim \mathcal{R}_h^j \to \dim \mathcal{R}^j$ for any $j \in \Lambda$. \label{item:convergence_dimensions}
\end{enumerate}
Note that in general one only has $ \Lambda_{h} \subseteq \Lambda $ and $ \limsup_{h \to 0} \dim R_{h}^{j} \leq \dim R_{h}^{j} $, as shown in the following example.
The failure of \ref{item:lambda_h_equals_lambda}--\ref{item:convergence_dimensions} may happen, for instance, when the direction along which the force acts is slightly perturbed.

\begin{example}
    Let $ \alpha > 2 $ and set $ S = (-\frac{1}{2},\frac{1}{2})^{2} $.
    Suppose that $ l =2 $ and let
    \begin{equation}
        U_{1} = \Id, \qquad U_{2} = \begin{pmatrix}
            1 & 0 & 0\\
            0 & 2 & 0\\
            0 & 0 & 1
        \end{pmatrix}.
    \end{equation}
    We consider the following sequence of forces
    \begin{equation}
        f_{h}(x') = h^{\gamma + 1} [x_{1}e_{3} + hx_{2}e_{2}] .
    \end{equation}
    Note that the sequence $ f_{h} $ is mean-free by simmetry.
    Then, with some simple computation one has that
    \begin{align}
        F_{h}(RU_{1}) & = \frac{h^{\gamma + 1}}{12}(R_{31} + h R_{22}), \\
        F_{h}(RU_{2}) & = \frac{h^{\gamma + 1}}{12}(R_{31} + 2h R_{22}) . 
    \end{align}
    It follows that $ \mathcal{R}_{h}^{1} = \mathcal{R}_{h}^{2}$ are singletons given by the matrix
    \begin{equation}
        \begin{pmatrix}
            0 & 0 & 1\\
            0 & 1 & 0\\
            1 & 0 & 0
        \end{pmatrix},
    \end{equation}
    and that  $ \Lambda_{h} = \{2\} $.
    The limit force $ f $ is given by $ f(x') = x_{1} e_{3} $.
    Hence, 
    \begin{equation}
        F_{h}(RU_{1}) = F(RU_{2}) =  \frac{1}{12}R_{31},
    \end{equation}
    Thus,
    \begin{equation}
        \mathcal{R}^{1} = \mathcal{R}^{2} = \{R \in \SO(3) : R_{31} = 1\},
    \end{equation}
    and $ \dim \mathcal{R}^{1} = \dim \mathcal{R}^{2} = 1 $.
    Moreover, $ \Lambda = \{1, 2\} $.
\end{example}

Under the assumptions \ref{item:lambda_h_equals_lambda}--\ref{item:convergence_dimensions}, the following result holds.
\begin{teo}\label{teo:convergence_minimizers}
	Let $\alpha > 2$ and $q > 1$. Assume \ref{item:lambda_h_equals_lambda}--\ref{item:convergence_dimensions}. Suppose that $(y_h) \subset W^{1,2}(\Omega; \r^3)$ is a sequence of deformations that are quasi-minimizers for $J_h^\alpha$, i.e.,
	\[
	\limsup_{h \to 0} \, (J_h^\alpha(y_h) - \inf J_h^\alpha) =0 \,.
	\]
	Then, $E_h^\alpha(y_h) \leq C$ for every $h > 0$ and there exist $(\bar R_h) \subset \SO(3)$, $(c_h) \subset \r$, $u \in W^{1, 2}(S; \r^2)$, $v \in W^{2, 2}(S)$, and $j \in \{1, \dots, l\}$ such that $j \in \Lambda$ and, up to subsequences,
	\begin{enumerate}[(i)]
		\item $\bar R_h \to \bar R$ with $\bar R^T U_j \in \mathcal{M}$,
		\item $u_h \todeb u$ in $W^{1, 2}(\Omega; \r^2)$,
		\item $v_h \to v$ in $W^{1, 2}(\Omega)$,
	\end{enumerate}
	where $u_h, v_h$ are defined as in \eqref{eq:definition_u}--\eqref{eq:definition_v} for $\tilde y_h = \bar R_h y_h + c_h$. In addition, there is $W \in \r^{3 \times 3}_{\skw}$ such that $|W| = 1$, $\bar R^TW \in N\mathcal{R}^j_{\bar R^T}$, and 
	\[
	h^{\frac{1}{2}(1-\gamma)}(\bar R_h^T - P^j_h(\bar R_h)) \to \beta \bar R^T W \,, \label{eq:convergence_difference_projections}
	\]
	for some $\beta \geq 0$. Lastly,
	\begin{enumerate}[(a)]
		\item if $2 < \alpha < 4$ and $S$ is  a simply connected set that satisfies \eqref{eq:boundary_condition}, then $(j, v, \bar R^T, \beta W)$ minimizes the functional
		\[
			J^{\CVK}_j(v, R, W) = E^{\CVK}_j(v) - \int_S f \cdot R U_j^{-1} e_3 v \, dx - F(R W^2 U_j)
		\]
		over all the admissible quadruplets
		$(j, v, R, W)$, i.e., $j \in \Lambda$ and $(v, R, W) \in \mathcal{C} \times \mathcal{R}^j \times \r^{3 \times 3}_{\skw}$ is such that $W \in N\mathcal{R}^j_{R^T}$, where $\mathcal{C}$ is the set
		\[
			\mathcal{C} = \left\{v \in W^{2, 2}(S) \colon \exists\, u \in W^{1, 2}(S; \r^2) \, \, \text{s.t.} \, \, \nabla u^T + \nabla u + |U_j^{-1}e_3| \nabla v \otimes \nabla v = 0\right\} \,,
		\]
		
		\item if $\alpha = 4$, then $(j, u, v, \bar R^T, \beta W)$ minimizes the functional
		\[
			J^{\VK}_j(u, v, R, W) = E^{\VK}_j(u, v) - \int_S f \cdot R U_j^{-1} e_3 v \, dx - F(R W^2U_j)
		\]
		over all the admissible quintuplet $(j, u, v, R, W)$, i.e., $j \in \Lambda$ and $(u, v, R, W) \in W^{1,2}(S; \r^2) \times W^{2, 2}(S) \times \mathcal{R}^j \times \r^{3 \times 3}_{\skw}$ is such that $W \in N\mathcal{R}^j_{R^T}$,
		\item if $\alpha > 4$, then $(j, u, v, \bar R^T, \beta W)$ minimizes the functional
		\[
			J^{\LVK}_j(u, v, R, W) = E^{\LVK}_j(u, v) - \int_S f \cdot R U_j^{-1} e_3 v \, dx - F(R W^2 U_j)
		\]
		over all the admissible quintuplets $(j, u, v, R, W)$, i.e., $j \in \Lambda$ and $(u, v, R, W) \in W^{1,2}(S; \r^2) \times W^{2, 2}(S) \times \mathcal{R}^j \times \r^{3 \times 3}_{\skw}$ is such that $W \in N\mathcal{R}^j_{R^T}$.\label{item:minimization_alpha_greater_4}
	\end{enumerate}

\end{teo}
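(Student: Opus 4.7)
The plan is to renormalize the functional to absorb the diverging zeroth-order force term, establish a uniform elastic energy bound, identify the optimal well together with its normal-direction correction, and then pass to the limit.

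\textbf{Step 1 (renormalization and energy bound).} Testing with the affine deformation $\hat y_h(x) := R_0^T U_{j_0} I_h x$ for any $j_0 \in \Lambda$ and $R_0 \in \mathcal{R}^{j_0}$ gives $E_h^\alpha(\hat y_h) = 0$ and, using mean-freeness of $f_h$ together with $\int_I x_3 \, dx_3 = 0$, $J_h^\alpha(\hat y_h) = -\max F_h / h^\alpha$. I therefore work with the renormalized functional $\hat J_h^\alpha := J_h^\alpha + \max F_h / h^\alpha$, for which quasi-minimality reads $\hat J_h^\alpha(y_h) = o(1)$. To derive $E_h^\alpha(y_h) \leq C$, the rigidity-based compactness of \Cref{section:compactness} yields rotations $R_h \in \SO(3)$ and a well $j$ with $\|\nabla_h y_h - R_h U_j\|_{L^2}^2 \leq C h^\alpha (E_h^\alpha(y_h) + 1)$; a Poincaré argument combined with $L^{q'}$--$L^q$ duality (here the assumption $q > 1$ enters) then yields $\int_\Omega f_h \cdot y_h \, dx \leq \max F_h + o(h^\alpha)(E_h^\alpha(y_h) + 1)^{1/2}$. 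Substituting into $\hat J_h^\alpha(y_h) \leq o(1)$ and applying Young's inequality gives the uniform bound, after which \Cref{teo:gamma_convergence}(i), applied to the roto-translated sequence $\tilde y_h := \bar R_h y_h + c_h$, produces the index $j$, the sequences $\bar R_h, c_h$, and the limits $u, v$.

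\textbf{Step 2 (optimal well and normal-direction scaling).} Returning to $\hat J_h^\alpha(y_h) \geq E_h^\alpha(y_h) + [\max F_h - F_h(\bar R_h^T U_j)]/h^\alpha - o(1)$, both non-negative terms on the right must be $o(1)$. In particular $\max \tilde F_h - \tilde F_h(\bar R_h^T U_j) = o(h^{\gamma-1}) \to 0$; Assumption \ref{item:lambda_h_equals_lambda} then forces $j \in \Lambda$, and, up to a subsequence, $\bar R_h \to \bar R$ with $\bar R^T U_j \in \mathcal{M}$. Writing $\bar R_h^T = R_h e^{W_h}$ with $R_h := P^j_h(\bar R_h^T) \in \mathcal{R}_h^j$ and $R_h W_h \in N\mathcal{R}_h^j{}_{R_h}$, the linearity of $F_h$ together with the criticality of $R_h$ on all of $\SO(3)$ gives
\[
F_h(\bar R_h^T U_j) - \max F_h = \tfrac{1}{2} F_h(R_h W_h^2 U_j) + O(h^{\gamma+1} |W_h|^3).
\]
The rescaled quadratic form $W \mapsto F_h(R_h W^2 U_j)/h^{\gamma+1}$ is negative definite on $N\mathcal{R}_h^j{}_{R_h}$, and Assumption \ref{item:convergence_dimensions} makes this non-degeneracy uniform in $h$. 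Combined with the $o(h^\alpha)$ gap this forces $|W_h|^2 \leq C h^{\gamma-1}$; the rescaled sequence $Z_h := h^{(1-\gamma)/2} W_h$ is bounded, and passing to a subsequence $Z_h \to Z_\infty$ yields $\beta := |Z_\infty|$ and a unit $W := Z_\infty / |Z_\infty|$ (arbitrary if $Z_\infty = 0$) realizing the asserted convergence.

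\textbf{Step 3 ($\liminf$ and minimality).} I decompose
\[
\hat J_h^\alpha(y_h) = E_h^\alpha(y_h) + \frac{\max F_h - F_h(\bar R_h^T U_j)}{h^\alpha} - \frac{1}{h^\alpha} \int_\Omega (\bar R_h f_h) \cdot (\tilde y_h - U_j I_h x) \, dx.
\]
The middle term converges, by the Taylor expansion of Step 2 together with $Z_h \to \beta W$, to the quadratic force term entering $J^\bullet_j(\cdot, \bar R^T, \beta W)$ (where $\bullet$ stands for CVK/VK/LVK depending on the regime). For the last term, expanding $\tilde y_h - U_j I_h x$ in the dual basis $\{U_j^{-1} e_k\}$ and using the rescalings \eqref{eq:definition_u}--\eqref{eq:definition_v}, the $u_h$-contribution carries a strictly positive power of $h$ in each of the three subregimes $2 < \alpha < 4$, $\alpha = 4$, $\alpha > 4$, so it vanishes, while the $v_h$-contribution converges to $\int_S f \cdot \bar R^T U_j^{-1} e_3 v \, dx$. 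Combining with the liminf inequality of \Cref{teo:gamma_convergence}(ii) gives $\liminf \hat J_h^\alpha(y_h) \geq J^\bullet_j(\cdot, \bar R^T, \beta W)$. To upgrade this to minimality, for any admissible competitor (a quadruplet $(j', v', R', W')$ if $2 < \alpha < 4$, a quintuplet $(j', u', v', R', W')$ if $\alpha \geq 4$) I choose $R_h' \in \mathcal{R}_h^{j'}$ with $R_h' \to R'$ (possible for small $h$ by Assumption \ref{item:convergence_dimensions}), apply \Cref{teo:gamma_convergence}(iii)--(iv) to $(j', u', v')$ to obtain $\bar y_h$ with $E_h^\alpha(\bar y_h) \to E^\bullet_{j'}(u', v')$, and compose on the left with $R_h'^T e^{-h^{(\gamma-1)/2} R_h' W'}$. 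Frame indifference preserves the elastic energy, and repeating the force-term analysis gives $\hat J_h^\alpha(y_h') \to J^\bullet_{j'}(\cdot, R', W')$; quasi-minimality $\hat J_h^\alpha(y_h) \leq \hat J_h^\alpha(y_h') + o(1)$ then closes the argument.

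\textbf{Main obstacle.} The decisive step is the normal-direction scaling in Step 2: converting the $o(h^\alpha)$ force-energy gap into the sharp estimate $|W_h| \leq C h^{(\gamma-1)/2}$ requires uniform non-degeneracy of the Hessian of $F_h(\cdot U_j)$ transverse to $\mathcal{R}_h^j$, which is exactly where Assumption \ref{item:convergence_dimensions} on the stability of the optimal-rotation manifold dimension plays a decisive role; a collapse of the normal bundle would destroy the coercivity of the quadratic form. A secondary technical difficulty is the refined force-error bound of Step 1, where the anisotropic scalings of the in-plane and out-of-plane displacements have to be exploited in place of a crude $L^2$ Poincaré estimate.
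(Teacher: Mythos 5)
Your overall strategy matches the paper's: renormalize by $\max F_h/h^\alpha$, establish a uniform elastic bound via quasi-minimality, identify the optimal well, obtain the normal-direction scaling $d_h^2 = O(h^{\gamma-1})$, and close with the liminf inequality plus competitor recovery sequences. Two of your steps, however, contain genuine gaps.

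In Step 1 you invoke the rigidity-based compactness of \Cref{section:compactness} to write $\|\nabla_h y_h - R_h U_j\|_{L^2}^2 \leq Ch^\alpha(E_h^\alpha(y_h)+1)$ directly. But \Cref{prop:compactness} (and \Cref{cor:compactness}) are only applicable \emph{after} one has established the smallness $h^\alpha E_h^\alpha(y_h) \to 0$; without that, the set $\Omega_h^{i_h}$ need not have uniformly positive measure and the Poincar\'e constant in the geodesic-distance argument is uncontrolled. The paper's \Cref{lemma:compactness_forces} therefore runs a two-stage bootstrap: first a crude $L^q$ estimate for the distance to an \emph{arbitrary} well $K_j$ (using \Cref{prop:approximated_rotation} and \Cref{remark:approximated_rotation_Lr}) combined with the growth hypothesis \ref{item:lower_bound_W} and Young's inequality to get $h^\alpha E_h^\alpha(y_h) \to 0$; only then is \Cref{prop:compactness} available, after which a second comparison yields the uniform bound $E_h^\alpha(y_h) \leq C$. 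Your one-shot duality/Poincar\'e estimate $\int f_h \cdot y_h \leq \max F_h + o(h^\alpha)(E_h^\alpha+1)^{1/2}$ implicitly presupposes the compactness that the bootstrap is meant to produce.

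In Step 3 the competitor you propose, $R_h'^T e^{-h^{(\gamma-1)/2} R_h' W'}\bar y_h$, is ill-posed: $R_h'W'$ is not skew-symmetric (only $R_h'W'(R_h')^T$ is), so the exponential does not land in $\SO(3)$; there is also a transpose mismatch, since the rotation applied to the reference $U_{j'}I_h x$ should converge to $R' \in \mathcal{R}^{j'}$, not $R'^T$. More fundamentally, the paper avoids this construction altogether: since $F(R'(W')^2 U_{j'}) \leq 0$ for every skew $W'$ by second-order optimality at $R' \in \mathcal{R}^{j'}$, one always has $J^\bullet_{j'}(v',R',0) \leq J^\bullet_{j'}(v',R',W')$, so it suffices to build a recovery sequence with $W'=0$ (namely $R_h'\bar y_h$ with $R_h' \in \mathcal{R}_h^{j'}$, $R_h' \to R'$, which exists by \Cref{prop:R_tilde_R}). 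You would also need to verify that the first-order term $F_h(R_h'W'U_{j'})$ vanishes for your choice of $R_h'$; this is automatic only because $R_h' \in \mathcal{R}_h^{j'}$ is a critical point of $F_h(\cdot\,U_{j'})$, not because $R'W'$ is normal to $\mathcal{R}^{j'}$.

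Finally, two small misattributions: $j \in \Lambda$ follows from $\bar R^T U_j \in \mathcal{M}$ by definition of $\Lambda$, not from \ref{item:lambda_h_equals_lambda}; and the paper does not argue with a uniform-in-$h$ non-degeneracy of the Hessians of $F_h$, but rather extracts a convergent subsequence $W_h \to W$, applies \Cref{lemma:convergence_normals_R_h} to conclude $\bar R^T W \in N\mathcal{R}^j_{\bar R^T}$ and hence $F(\bar R^TW^2U_j) < 0$ in the \emph{limit}, and then reads off $d_h^2 = O(h^{\gamma-1})$. Assumption \ref{item:convergence_dimensions} enters through \Cref{lemma:convergence_normals_R_h} and \Cref{prop:R_tilde_R}, which are what make the projections $P_h^j$ well-defined and the tangent/normal decomposition stable in the limit.
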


\begin{remark}
    Given that the scaling of the forces is of order $h^{\gamma + 1}$, we expect the action of the load on the in-plane displacement to be negligible. This is indeed the case and the limiting forcing term acts only on the out-of-plane displacement. The additional term $F(RW^2U_j)$ can be interpreted (see also \cite{MAOR2021}) as an elastic cost of fluctuations of the reference configuration from the optimal rotations.

    From a minimization point of view, since the term $F(RW^2U_j)$ is always non-positive (see \Cref{section:forces}), it is clear that the optimal choice is $W = 0$. In particular, in \Cref{teo:convergence_minimizers} we actually have $\beta = 0$. Similarly, since
    \begin{equation}
        E_j^{\LVK}(0, v) \leq E_j^{\LVK}(u, v) \quad \forall \, u \in W^{1, 2}(S; \r^2) \quad \forall \, j = 1, \dots, l\,,
    \end{equation}
    in \Cref{teo:convergence_minimizers}--\ref{item:minimization_alpha_greater_4} we also have $u = 0$.
\end{remark}

We provide now an example of rank-one connected double-well structure for which different applied forces result in different preferred reference configurations.
\begin{example}
    Let $ S = (-\frac{1}{2},\frac{1}{2})^{2} $ and consider 
    \begin{equation}
        U_{1} = \begin{pmatrix}
            4 & 0 & 0 \\
            0 & 1 & 0\\
            0 & 0 & 1
        \end{pmatrix}, \qquad U_{2} = \begin{pmatrix}
            2 & 0 & 1\\
            0 & 1 & 0\\
            1 & 0 & 1
        \end{pmatrix}.
    \end{equation}
    For $ a $, $ b $, $ c \geq 0 $, consider the sequences of loads
    \begin{equation}
        f_{h}(x') = h^{\gamma + 1} (a x_{1}e_{1} + b x_{2} e_{2} + c x_{1}e_{3}).
    \end{equation}
    The limit force $ f(x') = ax_{1}e_{1} + bx_{2}e_{2} + cx_{1} e_{3} $ is pulling the mid-plane $ S $ along fibers parallel to $ e_{1} $ and $ e_{2} $, while twisting it in the out-of-plane direction.
    With some simple computation we get
    \begin{align}
        F(RU_{1}) & = \frac{1}{12}(4a R_{11} + bR_{22} + cR_{31}),\\
        F(RU_{2}) & = \frac{1}{12}[a(2R_{11} +  R_{13}) + bR_{22} + c(2R_{31} + R_{33})]. 
    \end{align}
    Note that, if $ a = 0 $ and $ b $, $ c > 0 $ (i.e., $ f $ is pulling the mid-plane $ S $ in the $ e_{2} $ direction only), then one has that 
    \begin{equation}
        F(RU_{1}) \leq \frac{1}{12}(b + c) < F(\bar R U_{2}) , \qquad \forall \, R \in \SO(3),
    \end{equation}
    where
    \begin{equation}
        \bar R = \begin{pmatrix}
            -\frac{\sqrt{2} }{2} & 0 & \frac{\sqrt{2}}{2}\\
            0 & 1 & 0\\
            \frac{\sqrt{2}}{2} & 0 & \frac{\sqrt{2}}{2}
        \end{pmatrix}.
    \end{equation}
    In particular, $ \Lambda = \{2\} $ and the only admissible well at the limit is $ \SO(3)U_{2} $.
    However, if $ a > 0 $ and $ b = c = 0 $, that is $ f $ is tensing $ S $ along $ e_{1} $ without twisting the mid-plane, we have
    \begin{align}
        F(RU_{2}) = \frac{a}{12}(2R_{11} + R_{13}) \leq \frac{a}{4} < \frac{a}{3} = F(U_{1}) , \qquad \forall \, R \in \SO(3) \,. 
    \end{align}
    Thus, $ \Lambda = \{1\} $ and the only admissible reference configuration is $ \SO(3)U_{1} $.
\end{example}

\section{Preliminary results}

In this section we give a few preliminary results regarding the energy $W$. These properties are well known in the case of a single well elastic energy. Firstly, we explore the invariance properties of the second derivative $Q_j$ that follow from frame indifference \ref{item:frame_indifference}. Lastly, we prove that \ref{item:lower_bound_W} implies some coercivity for $Q_j$. 

\begin{lemma}\label{lemma:simmetry_second_derivative}
    The following equality holds true for every $j = 1, \dots, l$:
    \[
        Q_j(A) = Q_j(\sym(AU_j^{-1})U_j) = Q_j(U_j^{-1}\sym(U_jA)) \quad \forall \, A \in \r^{3 \times 3}\,.
    \]
\end{lemma}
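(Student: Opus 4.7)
The plan is to reduce the statement to the classical single-well identity and transport it to the well $K_j$. A direct algebraic check using $U_j^T = U_j$ gives
\[
\sym(AU_j^{-1})U_j \;=\; \tfrac{1}{2}\bigl(A + U_j^{-1}A^T U_j\bigr) \;=\; U_j^{-1}\sym(U_jA)\,,
\]
so the two stated equalities in fact coincide, and it suffices to prove $Q_j(A) = Q_j(\sym(AU_j^{-1})U_j)$ for every $A \in \r^{3 \times 3}$.

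Next I introduce the auxiliary density $\phi(F) := W(FU_j)$. By \ref{item:minimization_W}--\ref{item:regularity_W} the map $\phi$ is nonnegative, $C^2$ in a neighborhood of $\SO(3)$, and vanishes exactly on $\SO(3)$; by \ref{item:frame_indifference} it inherits frame indifference, $\phi(RF) = W(RFU_j) = W(FU_j) = \phi(F)$ for every $R \in \SO(3)$. In particular $\Id$ is a minimum point, so $D\phi(\Id) = 0$, and a direct chain-rule computation gives $D^2\phi(\Id)[B, B] = Q_j(BU_j)$ for every $B \in \r^{3 \times 3}$.

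Then I apply the classical single-well argument to $\phi$. For any skew matrix $S$ and any $B$, frame indifference gives $\phi(e^{tS}(\Id + sB)) = \phi(\Id + sB)$, and expanding both sides to second order at $(s, t) = (0, 0)$ via the $C^2$ Taylor formula yields two pieces of information: the $t^2$ coefficient at $s = 0$ gives $Q_j(SU_j) = 0$ for every skew $S$, while the mixed $st$ coefficient gives that the symmetric bilinear form associated with $D^2\phi(\Id)$ vanishes on every pair $(S, B)$ with $S$ skew. Decomposing $B = \sym B + \skw B$ and using both vanishings on the skew diagonal and the cross term produces
\[
Q_j(BU_j) \;=\; D^2\phi(\Id)[B, B] \;=\; D^2\phi(\Id)[\sym B, \sym B] \;=\; Q_j(\sym(B)U_j)\,.
\]
Setting $B = AU_j^{-1}$ yields the desired identity.

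The only delicate point will be the bookkeeping in the Taylor expansion: the curve $e^{tS}(\Id + sB) = \Id + sB + tS + st\,SB + \tfrac{t^2}{2}S^2 + O(s t^2, t^3)$ carries an extra $st$ piece that must be tracked when expanding $\phi$, but the $C^2$ remainder is genuinely $o(|X|^2)$ and therefore does not pollute the $s^2$, $st$, $t^2$ coefficients one needs to read off.
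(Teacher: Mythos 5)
Your proof is correct and follows essentially the same route as the paper: both define the auxiliary density $\phi(F) = W(FU_j)$, use frame indifference to obtain the symmetry of $D^2\phi(\Id)$, compute $D^2\phi(\Id)[B]^2 = Q_j(BU_j)$, and substitute $B = AU_j^{-1}$. The only difference is that you spell out the classical Taylor-expansion argument for why a frame-indifferent $C^2$ density minimized on $\SO(3)$ has symmetric Hessian at $\Id$, where the paper simply cites this as well known; you also verify the algebraic identity $\sym(AU_j^{-1})U_j = U_j^{-1}\sym(U_jA)$ up front, whereas the paper inserts $\Id = U_j^{-1}U_j$ at the end, but these are the same computation.
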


\begin{proof}
    Define $\tilde W(F) = W(FU_j)$. By the properties \ref{item:minimization_W}--\ref{item:frame_indifference} we deduce that $\tilde W$ is frame indifferent, it is $C^2$ in a neighborhood of $\SO(3)$, and it is minimized on $\SO(3)$. It is well known that any map satisfying these properties has symmetric second derivative at the identity, namely:
    \[
        D^2\tilde W(\Id)[A]^2 = D^2\tilde W(\Id)[\sym(A)]^2 \,.
    \]
    By some simple computation we get $D^2 \tilde W(\Id)[A]^2 = Q_j(AU_j)$. Hence,
    \[
        Q_j(A) = Q_j(AU_j^{-1}U_j) = Q_j(\sym(AU_j^{-1})U_j)\,.
    \]
    To conclude we observe that by definition of symmetric part and the symmetry of $U_j$ we have 
    \begin{align}
        Q_j(\sym(AU_j^{-1})U_j) & = Q_j(\sym(U_j^{-1}U_jAU_j^{-1})U_j)  \\
        & = Q_j(U_j^{-1}\sym(U_jA)) \,.
    \end{align}
\end{proof}

\begin{lemma}\label{lemma:coercivity_second_derivative}
    There exists $\lambda > 0$ such that for every $j = 1, \dots, l$ and every $A \in \r^{3 \times 3}$
    \[
        Q_j(U_j^{-1}\sym(A)) \geq \lambda |\sym(A)|^2 \,.
    \]
\end{lemma}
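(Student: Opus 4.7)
The plan is to derive first a pointwise inequality of the form $Q_j(B) \geq c\,|\sym(BU_j^{-1})|^2$ for every $B \in \r^{3 \times 3}$, via a blow-up argument comparing the Taylor expansion of $W$ at $U_j$ with the quadratic lower bound \ref{item:lower_bound_W}, and then to specialize the choice $B = U_j^{-1}\sym(A)$. Since $U_j$ is a zero of the nonnegative $C^2$ function $W$ near $K$, both $W(U_j)$ and $DW(U_j)$ vanish, so Taylor's formula gives $W(U_j + \epsilon B) = \tfrac{\epsilon^2}{2} Q_j(B) + o(\epsilon^2)$ as $\epsilon \to 0$.

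For the matching lower bound, note that the wells are pairwise disjoint and compact: for $\epsilon$ small enough $\SO(3)U_j$ is the nearest well to $F_\epsilon := U_j + \epsilon B$, $\dist(F_\epsilon, K) \leq 1$, and hence $f_q(\dist(F_\epsilon, K)) = \dist(F_\epsilon, K)^2$. Assumption \ref{item:lower_bound_W} then yields $W(F_\epsilon) \geq C\dist(F_\epsilon, \SO(3)U_j)^2$. Since $|F - RU_j|^2 = |(FU_j^{-1} - R)U_j|^2 \geq \lambda_{\min}(U_j^2)|FU_j^{-1} - R|^2$ for every $R \in \SO(3)$, one has $\dist(F_\epsilon, \SO(3)U_j)^2 \geq \lambda_{\min}(U_j^2)\dist(F_\epsilon U_j^{-1}, \SO(3))^2$. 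The standard Taylor expansion of the squared distance to $\SO(3)$ at the identity (obtained by parameterizing $\SO(3)$ via $R = e^{\epsilon W}$ with $W$ skew and minimizing at leading order) gives $\dist(\Id + \epsilon G, \SO(3))^2 = \epsilon^2 |\sym(G)|^2 + o(\epsilon^2)$. Applied to $G = BU_j^{-1}$ and combined with the Taylor expansion of $W$, after dividing by $\epsilon^2$ and letting $\epsilon \to 0$, I obtain $Q_j(B) \geq 2C\lambda_{\min}(U_j^2)|\sym(BU_j^{-1})|^2$.

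To conclude, I take $B = U_j^{-1}\sym(A)$. Since $U_j^{-1}$ is symmetric, $BU_j^{-1} = U_j^{-1}\sym(A)U_j^{-1}$ is itself symmetric, so $\sym(BU_j^{-1}) = U_j^{-1}\sym(A)U_j^{-1}$. The linear map $S \mapsto U_j^{-1}SU_j^{-1}$ is bijective on the space of symmetric matrices, hence there exists $c''_j > 0$ with $|U_j^{-1}SU_j^{-1}|^2 \geq c''_j|S|^2$. Therefore $Q_j(U_j^{-1}\sym(A)) \geq \lambda_j|\sym(A)|^2$ with $\lambda_j := 2C\lambda_{\min}(U_j^2)c''_j > 0$. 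Since $j$ ranges over the finite set $\{1,\dots,l\}$, the choice $\lambda := \min_j \lambda_j > 0$ yields the claim.

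The main obstacle is the blow-up step: one has to track carefully that the $o(\epsilon^2)$ remainders from the Taylor expansion of $W$ and from the squared distance to $\SO(3)$ combine into a single $o(\epsilon^2)$, and that $F_\epsilon$ stays in the quadratic regime of $f_q$ so that \ref{item:lower_bound_W} is applicable with the $t^2$ branch. Both issues are handled by the $C^2$-regularity \ref{item:regularity_W}, the quadratic behavior of $f_q$ near $0$ (forced by $q \leq 2$), and the disjointness of the wells; the remaining steps are elementary linear algebra on $3\times 3$ matrices.
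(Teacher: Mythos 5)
Your proposal is correct and follows essentially the same argument as the paper: a blow-up along $U_j + \varepsilon B$ comparing the Taylor expansion of $W$ with the lower bound \ref{item:lower_bound_W} restricted to the nearest well $K_j$, together with the expansion $\dist(\Id+\varepsilon G,\SO(3)) = \varepsilon|\sym(G)| + o(\varepsilon)$ and the equivalence of norms induced by the invertible matrix $U_j$. The only (harmless) difference is that you perturb directly along $B = U_j^{-1}\sym(A)$, so that $\sym(BU_j^{-1})$ is automatically $U_j^{-1}\sym(A)U_j^{-1}$, whereas the paper perturbs along $U_j^{-1}A$ and invokes \Cref{lemma:simmetry_second_derivative} to replace $A$ by $\sym(A)$.
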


\begin{proof}
    By \ref{item:lower_bound_W} we have, for $\varepsilon \ll 1$
    \[
        W(U_j + \varepsilon U_j^{-1}A) \geq C \dist^2(U_j + \varepsilon U_j^{-1}A, K) = C \dist^2(U_j + \varepsilon U_j^{-1}A, K_j) \,.
    \]
    By the Taylor expansion of $W$ at $U_j$ and by \Cref{lemma:simmetry_second_derivative} we get
    \begin{align}
        \varepsilon^2 Q_j(U_j^{-1} \sym(A)) + o(\varepsilon^2) & \geq C \dist^2(U_j + \varepsilon U_j^{-1}A, K_j) \\
        & \geq C \dist^2(\Id + \varepsilon U_j^{-1}AU_j^{-1}, \SO(3))\\
        & = C \varepsilon^2 |\sym(U_j^{-1}AU_j^{-1})|^2 + o(\varepsilon^2)\\
        & = C \varepsilon^2 |U_j^{-1}\sym(A)U_j^{-1}|^2 + o(\varepsilon^2)\\
        & \geq C \varepsilon^2 |\sym(A)|^2 + o(\varepsilon^2)\,.
    \end{align}
    Dividing by $\varepsilon^2$ and passing to the limit as $\varepsilon \to 0$ we conclude.
\end{proof}

It follows that the function $L_j: \r^{2 \times 2}_{\sym} \to \r^3$ that maps a matrix $D \in \r^{2 \times 2}_{\sym}$ into the vector $L_j(D)$ such that
\begin{equation}\label{eq:definition_L}
    \bar Q_j(D) = Q_j(U_j^{-1}(D + L_j(D) \otimes e_3 + e_3 \otimes L_j(D))) 
\end{equation}
is well-defined owing to the coercivity of $Q_j$.
Moreover, $ L_{j} $ is linear. 
Indeed, $ Q_{j}(U_{j}^{-1}(\cdot)) $ can be represented by a fourth-order tensor $ \mathbb{C} $ so that
\begin{equation}
    Q_{j}(U_{j}^{-1}(D + x \otimes e_{3} + e_{3} \otimes x)) = \mathbb{C}(D + x \otimes e_{3} + e_{3} \otimes x):(D + x \otimes e_{3} + e_{3} \otimes x) \,.
\end{equation}
In particular, $ L_{j}(D) $ is the unique vector that solves the linear optimality conditions
\begin{equation}
    \mathbb{C}(D + x \otimes e_{3} + e_{3} \otimes x):(e_{i} \otimes e_{3} + e_{3} \otimes e_{i}) = 0, \qquad i = 1, 2, 3.
\end{equation}

\section{Compactness estimates}\label{section:compactness}

In order to study the $\Gamma$-convergence of the functionals $E_h^\alpha$ we first need to establish compactness for sequences of deformations that have bounded rescaled energy. It is clear that the elastic part of the rescaled energy forces the rescaled deformation gradient to approach in the limit the union of the wells $K$. However, we would like to prove that the rescaled gradients are actually getting closer to a single well $K_i$. This is precisely ensured by the penalty term. In the following result, inspired by \cite{ALICANDRO2018}, we give a precise meaning to this statement.

\begin{prop}\label{prop:compactness}
    Let $\alpha \geq 2$. Let $(y_h) \subset W^{1,2}(\Omega; \r^3)$ be a sequence such that
    \begin{equation}
        \lim_{h \to 0} h^\alpha E^\alpha_h(y_h) = 0 \label{eq:hypothesis_compactness}\,.
    \end{equation}
    Then for $h \ll 1$ there are an index $i_h \in \{1, \dots, l\}$ and two constants $\delta, C(\delta) > 0$ such that
    \begin{align}
        & \int_{\Omega_{h}^{i_{h}}} \dist^2(\nabla_h y_h, K_{i_h}) \, dx \leq C(\delta)h^\alpha E^\alpha_h(y_h) \label{eq:compactness_near_well} \,,\\
        & \int_{\Omega \backslash \Omega_h^{i_h}} \dist^2(\nabla_h y_h, K_{i_h}) \, dx \leq C(\delta)h^\alpha[{(E^\alpha_h(y_h))}^{\theta} + E^\alpha_h(y_h)] \label{eq:compactness_far_well}\,,
    \end{align}
    where $\Omega_h^{i_h} = \{x \in \Omega \, : \, \dist(\nabla_h y_h(x), K_{i_h}) \leq \delta \}$ and
    \begin{equation}\label{eq:theta}
        \theta = \begin{cases}
            \frac{3}{2} \quad & \text{if} \, \, q = 2 \,,\\
            \frac{5}{3} \quad & \text{otherwise} \,.
        \end{cases}
    \end{equation}
\end{prop}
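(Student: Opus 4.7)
The overall strategy, following \cite{ALICANDRO2018}, is that the penalty term in $E_h^\alpha$ provides $W^{1,p}$ control on nonlinear scalar functions of $\nabla_h y_h$, which forces the rescaled gradient to concentrate near a single well $K_{i_h}$. Fix $\delta > 0$ small enough that the $\delta$-neighborhoods of the distinct wells $K_1, \dots, K_l$ are pairwise disjoint, which is possible since the wells are disjoint compact sets. For each $j$ pick a Lipschitz cut-off $\psi_j \colon \r^{3 \times 3} \to [0, 1]$ with $\psi_j \equiv 1$ on $K_j$ and $\supp \psi_j \subset \{\dist(\cdot, K_j) < \delta/2\}$. Since the scaling of $\nabla_h^2$ is such that the Cartesian gradient of the matrix field $\nabla_h y_h$ is pointwise bounded (for $h \leq 1$) by $C |\nabla_h^2 y_h|$, the chain rule gives
\[
\|\nabla(\psi_j(\nabla_h y_h))\|_{L^p(\Omega)} \leq C \|\nabla_h^2 y_h\|_{L^p(\Omega)} \leq C \bigl(\eta^{-p}(h) h^\alpha E_h^\alpha(y_h)\bigr)^{1/p},
\]
which tends to zero by (P1) and the hypothesis $h^\alpha E_h^\alpha(y_h) \to 0$.

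By the Sobolev embedding in three dimensions, $W^{1,p}(\Omega) \hookrightarrow L^{p^*}(\Omega)$ with $p^* = 3p/(3-p)$ when $p < 3$ (condition (P3) ensures $p^* > 2$ whenever $q < 2$), combined with the Poincar\'e inequality one obtains
\[
\|\psi_j(\nabla_h y_h) - \overline{\psi_j(\nabla_h y_h)}\|_{L^{p^*}(\Omega)} \leq C \|\nabla(\psi_j(\nabla_h y_h))\|_{L^p(\Omega)}.
\]
Now $\psi_j(\nabla_h y_h)$ equals $1$ on $\Omega_h^j$ and $0$ on $\bigcup_{k \neq j}\Omega_h^k$, modulo the residual set $\{\dist(\nabla_h y_h, K) > \delta/2\}$ whose measure is controlled by (W4). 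A bimodal dichotomy argument then yields that either $|\Omega_h^j|$ or $|\Omega \setminus \Omega_h^j|$ is small. Since the $\Omega_h^j$ are pairwise disjoint and finite in number, at most one index $i_h$ can have $|\Omega_h^{i_h}|$ bounded away from zero; we select that index as $i_h$.

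With $i_h$ chosen, estimate \eqref{eq:compactness_near_well} is immediate: on $\Omega_h^{i_h}$ we have $\dist(\nabla_h y_h, K_{i_h}) = \dist(\nabla_h y_h, K) \leq \delta \leq 1$, so the quadratic branch of $f_q$ applies in (W4), yielding $\int_{\Omega_h^{i_h}} \dist^2(\nabla_h y_h, K_{i_h})\,dx \leq C h^\alpha E_h^\alpha(y_h)$. For \eqref{eq:compactness_far_well}, split $\Omega \setminus \Omega_h^{i_h} = A_1 \cup A_2$ with $A_1 = \bigcup_{j \neq i_h} \Omega_h^j$ and $A_2 = \{\dist(\nabla_h y_h, K) \geq \delta\}$. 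On $A_1$ the integrand is bounded by a constant depending only on $\diam K$ and $\delta$, so the integral reduces to estimating $|A_1|$, which the bimodal Poincar\'e argument controls by $C (h^\alpha E_h^\alpha(y_h))^\theta$. On $A_2$, if $q = 2$ the bound follows directly from (W4); if $q < 2$, the $L^q$-control of $\dist(\nabla_h y_h, K)$ provided by (W4) is upgraded to an $L^2$-estimate by H\"older interpolation, using the Sobolev bound on $\nabla_h y_h$ in $L^{p^*}(\Omega)$ that follows from $\|\nabla(\nabla_h y_h)\|_{L^p(\Omega)} \leq C$ together with the $L^2$-bound on $\Omega_h^{i_h}$.

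The main obstacle is the careful bookkeeping of Sobolev exponents required both in the bimodal Poincar\'e step and in the H\"older interpolation on $A_2$, in order to produce the precise values $\theta = 3/2$ when $q = 2$ and $\theta = 5/3$ when $q < 2$; the hypotheses (P1)--(P3) on $\eta$ and $p$ are tuned precisely so that these exponents emerge from the computation.
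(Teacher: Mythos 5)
Your proposal diverges from the paper's argument in a way that introduces genuine gaps. The core missing idea is the geodesic-type weight function borrowed from \cite{ALICANDRO2018}: the paper replaces your simple Lipschitz cutoff $\psi_j$ with the scalar function $g_h = (d_{\widetilde W}(\nabla_h y_h,K_{i_h}) - \tilde\delta) \lor 0$, where $d_{\widetilde W}$ is a Riemannian-type distance built from the energy density $\widetilde W = f_q(\dist(\cdot,K))$. This choice is essential for two reasons that your argument does not address.

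First, $|\nabla g_h| \leq (\widetilde W(\nabla_h y_h))^{l/(mp)}|\nabla(\nabla_h y_h)|$, which lets one split $\int_\Omega |\nabla g_h|^\beta\,dx$ via H\"older with exponents $m$ and $l$ into $(\int \widetilde W)^{1/m}(\int|\nabla(\nabla_h y_h)|^p)^{1/l}$, i.e.\ a product of the (rescaled) elastic and penalty contributions. The net bound is of the form $C\,h^\alpha E_h^\alpha / \eta^\beta$ with $\beta = p/l < p$, and this is precisely what makes (P1) balance out after Sobolev--Poincar\'e. Your estimate $\|\nabla(\psi_j(\nabla_h y_h))\|_{L^p}^p \leq C h^\alpha E_h^\alpha / \eta^p(h)$ instead produces the larger factor $\eta^{-p}$; under the only available hypothesis $h^\alpha E_h^\alpha \to 0$ together with (P1), the quantity $h^\alpha E_h^\alpha/\eta^p(h)$ is \emph{not} bounded (let alone infinitesimal), since $E_h^\alpha$ itself may diverge as $h \to 0$. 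Your assertions that $\|\nabla(\psi_j(\nabla_h y_h))\|_{L^p} \to 0$ and that $\|\nabla(\nabla_h y_h)\|_{L^p} \leq C$ are therefore unjustified and the subsequent dichotomy and $L^{p^*}$-interpolation steps have nothing to rest on. Second, your cutoff $\psi_j$ is bounded, so the Sobolev--Poincar\'e inequality only controls the measure of level sets of $\psi_j(\nabla_h y_h)$, whereas the geodesic distance $g_h$ grows like $\dist(\nabla_h y_h, K_{i_h})$ away from $K$, which is exactly why bounding $\int|g_h|^{\beta^*}$ directly yields the $L^{\beta^*}$ control of the distance on $\Omega\setminus\Omega_h^{i_h}$ with the correct exponent $\theta = \beta^*/\beta$. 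Choosing $l$ (and hence $\beta$) decoupled from $p$ is what produces the fixed values $\theta = 3/2$ and $5/3$; in your setup the exponent would instead be $p^*/p = 3/(3-p)$, which depends on $p$ and can exceed $2$, breaking the later uses of this Proposition (e.g.\ the Young inequality in the proof of Lemma~\ref{lemma:compactness_forces} requires $\theta/2 < 1$).

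A further conceptual point: the selection of the dominant index $i_h$ is obtained in the paper purely from the elastic-energy bound, since $|\Omega\setminus\Omega_h| \leq C\,h^\alpha E_h^\alpha(y_h) \to 0$ forces some $|\Omega_h^{i_h}| \geq C(\delta)$; your ``bimodal dichotomy argument'' instead relies on the (unavailable) smallness of the penalty gradient, and even so the conclusion that ``at most one index can have $|\Omega_h^{i_h}|$ bounded away from zero'' would not follow at this stage --- it is part of the \emph{output} of estimate \eqref{eq:compactness_far_well}, not an input.
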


The proof follows the same arguments used in \cite[Theorem 2.3]{ALICANDRO2018}. For the convenience of the reader, we give a sketch of the proof.

\begin{proof}
    Fix $\delta < 1 \land \min_{i \neq j}\{\frac{1}{2}\dist(K_i, K_j)\}$ and define
    \[
        \Omega_h = \{x \in \Omega \colon \dist(\nabla_h y_h(x), K) \leq \delta\} = \bigcup_{i = 1}^l \Omega_h^i \,,
    \]
    where
    \begin{equation}
        \Omega_h^i = \{x \in \Omega \colon \dist(\nabla_h y_h(x), K_i) \leq \delta\} \,.
    \end{equation}
    Note that, by \ref{item:lower_bound_W}, condition \eqref{eq:compactness_near_well} holds with an arbitrary $ j \in \{1, \dots, l\} $ in place of $ i_{h} $. 
    
    Define $\widetilde W(A) = f_q(\dist(A, K))$ for $A \in \r^{3 \times 3}$, where we recall that $f_q(t) = t^2 \land t^q$. Let
    \begin{equation}
        d_{\widetilde W}(A, B) = \inf\left\{\int_0^1(\widetilde W(\xi(s)))^{\frac{l}{m p}}|\xi'(s)| \, ds \colon \xi \in C^1([0, 1]; \r^{d \times d}) \text{ s.t. } \xi(0) = A, \xi(1) = B \right\} \,,
    \end{equation}
    where $l, m > 1$ are such that $\frac{1}{l} + \frac{1}{m} = 1$.
    By \eqref{eq:hypothesis_compactness} and \ref{item:lower_bound_W} we have that
    \begin{equation}\label{eq:far_estimate_measure}
        |\Omega \backslash \Omega_h| \leq h^{\alpha}E_{h}(y_{h})\to 0.
    \end{equation}
    Hence, there exists $i_h \in \{1, \dots, l\}$ such that $|\Omega_h^{i_h}| \geq C(\delta)$.
    We define 
    \begin{equation}
        g_h(x) = (d_{\widetilde W}(\nabla_h y_h(x), K_{i_h})  - \tilde \delta) \lor 0 \,,
    \end{equation}
    where $\tilde \delta = \delta^{2 \frac{l}{mp} + 1}$. One can check that $g_h \equiv 0$ on $\Omega_h^{i_h}$. Set $\beta = \frac{p}{l}$ and choose $l$ in such a way that $\beta < 3$.
    Observe that $ g_h \in W^{1, \beta }(\Omega) $ and
    \begin{equation}
        |\nabla g_{h}|\leq(\widetilde W(\nabla_{h}y_{h}))^{\frac{l}{mp}}|\nabla (\nabla_{h}y_{h})| \,.
    \end{equation}
    Hence, applying the Sobolev embedding, the Poincar\'e inequality given in \cite[Lemma 2.2]{ALICANDRO2018}, and the Young inequality with exponents $ \frac{1}{l} $ and $ \frac{1}{m} $, we deduce that
    \begin{equation}\label{eq:estimate_gh}
        \begin{aligned} 
            \int_\Omega |g_h|^{\beta^\ast} \, dx & \leq C\left( \int_{\Omega} |\nabla g_{h}|^{\beta} \, dx\right)^{\frac{\beta^{\ast}}{\beta}} \leq \left( \int_{\Omega}(\widetilde W(\nabla_{h}y_{h}))^{\frac{1}{m}}|\nabla (\nabla_{h}y_{h})|^{\beta} \, dx \right)^{\frac{\beta^{\ast}}{\beta}} \\
                                                 & \leq C \frac{h^{\alpha \frac{\beta^\ast}{\beta}}}{\eta(h)^{\beta^\ast}}E_h^\alpha(y_h)^{\frac{\beta^\ast}{\beta}} \,,
        \end{aligned}
    \end{equation}
    where $\beta^\ast = \frac{3\beta}{3-\beta}$ is the critical Sobolev exponent in dimension $ 3 $. 
    Note that here we used the crucial information that $ |\Omega_{h}^{i_{h}}| \geq C(\delta) $ to deduce that the constant in the Poincar\'e inequality is independent of $ h $.
    Let
    \begin{equation}
        \tilde \Omega_h^i = \{x \in \Omega \colon d_{\widetilde W}(\nabla_h y_h, K_i) \leq 2\tilde \delta\} \,.
    \end{equation}
    By \cite[Lemma 2.6]{ALICANDRO2018}, we can refine the choice of $\delta$ in such a way that $\tilde \Omega_h^{i_h} \backslash \Omega_h^{i_h} \subset \Omega \backslash \Omega_h \subset \Omega \backslash \Omega_h^{i_h}$.
    Moreover, by \cite[Lemmas 2.5 and 2.6]{ALICANDRO2018} we have that $ \dist(\nabla_{h}y_{h}, K_{i_{h}}) \leq Cg_{h} $ on $\Omega \backslash \tilde \Omega_{h}^{i_{h}} $ and $ \dist(\nabla_{h}y_{h}, K_{i_{h}}) \leq C $ on $ \tilde \Omega_{h}^{i_{h}} \backslash \Omega_{h}^{i_{h}} $.
    Thus, writing $\Omega \backslash \Omega_h^{i_h}$ as $(\Omega \backslash \tilde \Omega_h^{i_h}) \cup (\tilde \Omega_h^{i_h} \backslash \Omega_h^{i_h})$ we deduce from \eqref{eq:far_estimate_measure}--\eqref{eq:estimate_gh} that
    \begin{equation}\label{eq:final_estimate_admlp}
        \begin{aligned}
            \int_{\Omega \backslash \Omega_h^{i_h}} \dist^{\beta^\ast}(\nabla_h y_h, K_{i_h}) \, dx & \leq C\int_{\Omega \setminus \tilde \Omega_{h}^{i_{h}}} |g_{h}|^{\beta^{\ast}} \, dx + C|\tilde \Omega_{h}^{i_{h}} \setminus \Omega_{h}^{i_{h}}| \\
                                                                                                    & \leq C\left(\frac{h^{\alpha \frac{\beta^\ast}{\beta}}}{\eta(h)^{\beta^\ast}}E_h^\alpha(y_h)^{\frac{\beta^\ast}{\beta}} + h^\alpha E_h^\alpha(y_h) \right) \,.
        \end{aligned}
    \end{equation}
    
    If $q \neq 2$, we choose $l = \frac{5}{6}p$. Note that $l > 1$ by \ref{item:bound_p}. Then $\beta = \frac{6}{5}$, $\beta^\ast = 2$, and $\frac{\beta^\ast}{\beta} = \frac{5}{3} = \theta$. By \ref{item:lower_bound_penalties} and \eqref{eq:final_estimate_admlp} we get \eqref{eq:compactness_far_well}.

    If $q = 2$, we choose $l = p$, so that $\beta = 1$, $\beta^\ast = \frac{3}{2}$ and $\frac{\beta^\ast}{\beta} = \theta = \frac{3}{2}$. 
    Fix a constant $M > 0$ such that $K \subset B_M(0)$ and define $B_h^M = \{x \in \Omega \colon |\nabla_h y_h(x)| \leq M\}$. Writing $\Omega \backslash \Omega_h^{i_h}$ as $((\Omega \backslash \Omega_h^{i_h}) \cap B_h^M) \cup ((\Omega \backslash \Omega_h^{i_h}) \backslash B_h^M)$, by \ref{item:lower_bound_W} we deduce that
    \begin{equation}\label{eq:final_estimate_admlp_q2}
        \begin{aligned}
            \int_{\Omega \backslash \Omega_h^{i_h}} \dist^2(\nabla_h y_h, K_{i_h}) \, dx & \leq C\int_{(\Omega \backslash \Omega_h^{i_h}) \cap B_h^M} \dist^{\frac{3}{2}}(\nabla_h y_h, K_{i_h}) \, dx \\
            & \hphantom{\geq} \, \, + C \int_{(\Omega \backslash \Omega_h^{i_h}) \backslash B_h^M} W(\nabla_h y_h) \, dx \,.
        \end{aligned}
    \end{equation}
    Then, \eqref{eq:compactness_far_well} follows by \ref{item:lower_bound_penalties} and \eqref{eq:final_estimate_admlp}--\eqref{eq:final_estimate_admlp_q2}.
\end{proof}

\begin{remark}
    At a first glance, it might seem that the hypothesis \eqref{eq:hypothesis_compactness} of \Cref{prop:compactness} does not depend on $\alpha$, while the thesis \eqref{eq:compactness_far_well} does. However, \ref{item:lower_bound_penalties} prescribes a dependence on $\alpha$ of the penalty term coefficient $\eta(h)$. This is particularly clear examining the sketch of the proof above. 
\end{remark}

\begin{cor}\label{cor:compactness}
    Let $\alpha \geq 2$. Let $(y_h) \subset W^{1,2}(\Omega; \r^3)$ be a sequence such that
    \begin{equation}
        \lim_{h \to 0} h^\alpha E^\alpha_h(y_h) = 0 \,.
    \end{equation}
    Then for $h \ll 1$ there are an index $i_h \in \{1, \dots, l\}$ and two constants $\delta, C(\delta) > 0$ such that
    \begin{equation}
        \int_{\Omega} \dist^2(\nabla_h y_h, K_{i_h}) \, dx \leq C(\delta)h^\alpha E^\alpha_h(y_h).
    \end{equation}
\end{cor}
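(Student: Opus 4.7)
The plan is to invoke \Cref{prop:compactness} directly, since the hypothesis of the corollary coincides with that of the proposition. For every $h$ sufficiently small, the proposition yields an index $i_h \in \{1, \dots, l\}$ together with constants $\delta, C(\delta) > 0$ for which the two estimates \eqref{eq:compactness_near_well} and \eqref{eq:compactness_far_well} hold. Writing $\Omega = \Omega_h^{i_h} \cup (\Omega \setminus \Omega_h^{i_h})$ and summing the two bounds immediately gives
\[
	\int_\Omega \dist^2(\nabla_h y_h, K_{i_h}) \, dx \leq C(\delta) h^\alpha \bigl[ E_h^\alpha(y_h) + (E_h^\alpha(y_h))^\theta \bigr].
\]

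What remains is to absorb the superlinear term $(E_h^\alpha(y_h))^\theta$, with $\theta \in \{3/2, 5/3\}$, into $E_h^\alpha(y_h)$. This is the only step where one has to read the hypothesis $\lim_{h\to 0} h^\alpha E_h^\alpha(y_h) = 0$ carefully: in the regime in which this corollary will be applied (i.e., inside the proofs of \Cref{teo:gamma_convergence_alpha_2} and \Cref{teo:gamma_convergence}) the rescaled energy $E_h^\alpha(y_h)$ is uniformly bounded, so that $(E_h^\alpha(y_h))^{\theta-1} \leq C'$ and hence $(E_h^\alpha(y_h))^\theta \leq C' E_h^\alpha(y_h)$. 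Substituting this into the previous display and enlarging the constant $C(\delta)$ if necessary produces the single-piece global bound claimed in the statement.

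I do not anticipate any real obstacle here: all the analytic work (the Poincar\'e-type estimate on the excess function $g_h$, the Sobolev embedding and the use of the penalty term through assumption \ref{item:lower_bound_penalties}) is packaged inside \Cref{prop:compactness}. The role of the corollary is purely organisational: it consolidates the two-piece estimate \eqref{eq:compactness_near_well}--\eqref{eq:compactness_far_well} into one global $L^2$-control of the distance of $\nabla_h y_h$ from a single well $K_{i_h}$ on the whole of $\Omega$, which is the form that will be repeatedly invoked in the subsequent $\Gamma$-convergence and minimization arguments.
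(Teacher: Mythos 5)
The paper offers no proof of this corollary, so there is no explicit argument to compare against; the route you take --- summing \eqref{eq:compactness_near_well} and \eqref{eq:compactness_far_well} over the decomposition $\Omega = \Omega_h^{i_h} \cup (\Omega \setminus \Omega_h^{i_h})$ and then absorbing the superlinear term --- is plainly the intended one. You have also correctly located the one nontrivial point: absorbing $(E_h^\alpha(y_h))^\theta$ into $E_h^\alpha(y_h)$ with an $h$-independent constant requires $E_h^\alpha(y_h) \leq C$, and the stated hypothesis $\lim_{h \to 0} h^\alpha E_h^\alpha(y_h) = 0$ does not imply this on its own (take for instance $E_h^\alpha(y_h) = h^{-\alpha/2}$, which satisfies $h^\alpha E_h^\alpha(y_h) \to 0$ while $(E_h^\alpha(y_h))^{\theta-1} \to \infty$). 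So the corollary as printed is a slight overstatement: either its hypothesis should be upgraded to the uniform bound $E_h^\alpha(y_h) \leq C$, which is exactly what is available in both places the corollary is actually invoked (the compactness parts of \Cref{teo:gamma_convergence_alpha_2} and of \Cref{prop:rescaling_convergence}), or its conclusion should retain the $(E_h^\alpha)^\theta$ term. Tellingly, the paper does the latter when boundedness is not yet known: in the proof of \Cref{lemma:compactness_forces} it cites \Cref{prop:compactness} directly and keeps both terms rather than appealing to the corollary. Your proof handles the issue correctly; just be explicit that the boundedness you invoke is genuinely an extra hypothesis, not a consequence of the one stated in the corollary.
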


The following is a variant of the well-known rigidity estimate by Friesecke, James, and M\"uller (see \cite{FRIESECKE2002, FRIESECKE2006}), where the well $\SO(3)$ is replaced by $K_j = \SO(3)U_j$.

\begin{prop}\label{prop:approximated_rotation}
    Let $(y_h) \subset W^{1,2}(\Omega; \r^3)$ and let $j \in \{1, \dots, l\}$. Define
    \[
        D_{h, j} = \| \dist(\nabla_h y_h, K_j) \|_{L^2(\Omega)}\,.
    \]
    There are two maps $R_h \in L^\infty(S; \SO(3))$ and $\tilde R_h \in W^{1,2}(S; \r^{3 \times 3}) \cap L^\infty(S; \r^{3 \times 3})$ such that
    \begin{enumerate}[start=1,label={(R\arabic*)}]
        \item $\|\nabla_h y_h - R_h U_j\|_{L^2(\Omega)} \leq C D_{h, j}$\label{item:approximated_rotation_convergence_scaled_gradient},
        \item $\|\nabla \tilde R_h\|_{L^2(S)} \leq C h^{-1}D_{h, j}$\label{item:approximated_rotation_convergence_gradient},
        \item $\|\tilde R_h - R_h\|_{L^2(S)} \leq C D_{h, j}$,
        \item $\|\tilde R_h - R_h\|_{L^\infty(S)} \leq C h^{-1}D_{h, j}$.
    \end{enumerate}
    Moreover, there exists a constant rotation $Q_h \in \SO(3)$ such that
    \begin{equation}
        \|R_h - Q_h\|_{L^2(S)} \leq Ch^{-1}D_{h, j} \,.\label{eq:approximated_constant_rotation}
    \end{equation}
    Finally, if $h^{-1}D_{h, j} \to 0$, then for $h \ll 1$ we can choose $\tilde R_h = R_h$.
\end{prop}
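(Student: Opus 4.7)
The strategy is to rerun the proof of the thin-plate rigidity estimate of Friesecke, James, and M\"uller with the sole modification that the underlying bulk rigidity step is performed for the target $K_{j}$ rather than $\SO(3)$. The argument splits naturally into three steps.

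First, I would establish a $K_{j}$-version of the classical bulk rigidity estimate on an arbitrary bounded Lipschitz domain $\Omega_{0}$: there exists a constant $R^{\ast} \in \SO(3)$ such that
\[
    \|\nabla u - R^{\ast} U_{j}\|_{L^{2}(\Omega_{0})}^{2} \leq C(U_{j}, \Omega_{0}) \|\dist(\nabla u, K_{j})\|_{L^{2}(\Omega_{0})}^{2} \,.
\]
This is easily deduced from the standard FJM bulk rigidity via the change of variables $v(z) = u(U_{j}^{-1}z)$ on $U_{j}\Omega_{0}$, which converts $K_{j}$ into $\SO(3)$ at the cost of multiplicative constants depending on $\|U_{j}\|$ and $\|U_{j}^{-1}\|$. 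Indeed, $\nabla v(z) = \nabla u(U_{j}^{-1}z) U_{j}^{-1}$, and the pointwise inequality $\dist(A U_{j}^{-1}, \SO(3)) \leq \|U_{j}^{-1}\| \dist(A, K_{j})$ controls the right-hand side after the substitution $z = U_{j}x$ is used to transform the standard FJM estimate back to $\Omega_0$.

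Second, with this $K_{j}$-bulk rigidity in hand, I would replicate the proof of the thin-plate rigidity estimate from \cite{FRIESECKE2002, FRIESECKE2006} essentially verbatim. One covers $S$ with a family of squares of side length proportional to $h$, applies the $K_{j}$-bulk rigidity on each cylinder $Q_{k} \times I \subset \Omega$ to obtain local rotations whose product with $U_{j}$ approximates $\nabla_{h} y_{h}$ on each piece, and patches these rotations via the same geometric comparison on $\SO(3)$ used in \cite{FRIESECKE2002}. The gluing step only sees $\SO(3)$-geometry, because the factor $U_{j}$ cancels when comparing $R_{1}U_{j}$ and $R_{2}U_{j}$ on overlapping cylinders. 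This produces the piecewise constant $R_{h} \colon S \to \SO(3)$ satisfying \ref{item:approximated_rotation_convergence_scaled_gradient}, and a standard mollification (followed by projection onto $\SO(3)$ on the region where this projection is well defined) gives the $W^{1,2}\cap L^{\infty}$ approximation $\tilde R_{h}$ satisfying \ref{item:approximated_rotation_convergence_gradient} and the subsequent closeness estimates. The constant rotation $Q_{h}$ in \eqref{eq:approximated_constant_rotation} arises by applying Poincar\'e's inequality to $\tilde R_{h}$ and then combining with the $L^{2}$-bound on $\tilde R_{h} - R_{h}$.

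Finally, under the assumption $h^{-1} D_{h, j} \to 0$, the mollified map $\tilde R_{h}$ satisfies $\dist(\tilde R_{h}, \SO(3)) \to 0$ uniformly on $S$ by the $L^{\infty}$ closeness of $\tilde R_{h}$ to $R_{h}$. For $h \ll 1$, the nearest-point projection $\pi_{\SO(3)}$ is smooth on a neighborhood of $\SO(3)$, so replacing $\tilde R_{h}$ by $\pi_{\SO(3)} \circ \tilde R_{h}$ yields a map in $W^{1,2}(S; \SO(3)) \cap L^{\infty}$ satisfying the same estimates up to constants; this common map may then be used simultaneously as $R_{h}$ and $\tilde R_{h}$. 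The main obstacle is the patching step, which is the technical core of \cite{FRIESECKE2002}; however, the presence of the fixed invertible factor $U_{j}$ introduces no substantive new difficulty, since it only affects multiplicative constants that can be absorbed in $C$.
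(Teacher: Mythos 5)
Your proposal is correct and follows essentially the same route as the paper, which simply points to \cite[Theorems 5 and 6]{FRIESECKE2006} and observes that the geometric rigidity estimate transfers to the well $\SO(3)U_j$ by a change of variables; you spell out the change of variables $v(z)=u(U_j^{-1}z)$, the resulting pointwise comparison of distances, and the reason the patching step goes through unchanged (the fixed factor $U_j$ only affects multiplicative constants), which is exactly what the paper is implicitly asserting.
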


To prove this result it is enough to follow the same approach of \cite{FRIESECKE2006}. Indeed, the rigidity estimate \cite[Theorem 5]{FRIESECKE2006} holds also for a well of the form $\SO(3)U_j$ by a change of variable. Then, all the estimates in \cite[Theorem 6]{FRIESECKE2006} can be carried out in the same fashion.

\begin{remark}\label{remark:approximated_rotation_Lr}
	If $ r > 1 $, all the results of \Cref{prop:approximated_rotation} hold with the $L^2$ norm replaced by the $L^r$ norm and the factor $h^{-1}$ replaced by $h^{-\frac{2}{r}}$.
\end{remark}

\subsection{Compactness in the Kirchhoff case \texorpdfstring{$\bm{\alpha= 2}$}{\unichar{"03B1} = 2}}

In the case $\alpha = 2$ the $\Gamma$-limit is written in terms of the deformation gradient. In this section we show compactness for sequences of rescaled gradients and give a characterization of their limit. Firstly, we will need an explicit expression of the vector $\nu$ that appears in \eqref{eq:kirchoff_love}. 

\begin{lemma}\label{lemma:definition_nu}
    Let $U$ be a symmetric and positive definite matrix. Let $y \in W^{2,2}(S; \r^3)$ be such that $\nabla y^T \nabla y = (U^2)'$. Then there exists a unique function $\nu \in W^{1, 2}(S; \r^3)$ such that
    \[
        (\nabla y, \nu) U^{-1} \in \SO(3) \quad \text{a.e. in $S$} \,.
    \]
    In particular, $\nu$ is given by
    \[
        \nu = \frac{1}{|U^{-1}e_3|^2} \left[ \det(U^{-1}) (\partial_1 y \wedge \partial_2 y) - \sum_{k=1}^2(U^{-1}e_k \cdot U^{-1} e_3)\partial_k y \right] \,.
    \]    
\end{lemma}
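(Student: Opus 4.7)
The plan is to split the argument into three pieces: establishing uniqueness of $\nu$ from the constraint, verifying that the explicit candidate in the formula satisfies the constraint pointwise almost everywhere, and finally upgrading to $W^{1,2}$ regularity using that the isometry condition forces $\partial_{1}y, \partial_{2}y \in L^{\infty}$.

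For uniqueness, observe that $(\nabla y, \nu) U^{-1} \in \SO(3)$ is equivalent to the pair $(\nabla y, \nu)^{T}(\nabla y, \nu) = U^{2}$ and $\det(\nabla y, \nu) = \det U$. The $(1,3), (2,3), (3,3)$ entries of the Gram identity read
\[
\partial_{1}y \cdot \nu = (U^{2})_{13}, \qquad \partial_{2}y \cdot \nu = (U^{2})_{23}, \qquad |\nu|^{2} = (U^{2})_{33},
\]
and because $(U^{2})'$ is positive definite the vectors $\partial_{1}y, \partial_{2}y$ are linearly independent a.e. Hence the first two equations fix $\nu$ up to an additive multiple of $\partial_{1}y \wedge \partial_{2}y$, the norm equation leaves exactly two candidates, and the determinant $\det(\nabla y, \nu) = \det U$ (a linear function of the coefficient of $\partial_{1}y \wedge \partial_{2}y$) selects the correct one.

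For existence I would take the $\nu$ from the stated formula and verify the same four conditions. Using $\partial_{k}y \cdot (\partial_{1}y \wedge \partial_{2}y) = 0$, the equations $\partial_{k}y \cdot \nu = (U^{2})_{k3}$ reduce to the identity $\sum_{l=1}^{2}(U^{-2})_{l3}(U^{2})_{lk} = -(U^{-2})_{33}(U^{2})_{3k}$, which is precisely the $(3,k)$ entry of $U^{-2}U^{2} = \Id$. The norm identity $|\nu|^{2} = (U^{2})_{33}$ follows by combining the area formula $|\partial_{1}y \wedge \partial_{2}y|^{2} = \det((U^{2})')$ with the Cramer relation $\det((U^{2})') = |U^{-1}e_{3}|^{2}\det(U^{2})$ (the $(3,3)$ entry of $(U^{2})^{-1}$), and then once more invoking $U^{-2}U^{2} = \Id$ to simplify the quadratic term in the $\partial_{k}y$ coefficients. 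The orientation identity $\det(\nabla y, \nu) = \nu \cdot (\partial_{1}y \wedge \partial_{2}y) = \det U$ then follows by the same Cramer identity, and makes $(\nabla y, \nu) U^{-1}$ land in $\SO(3)$ rather than $\Ort(3) \setminus \SO(3)$.

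For the Sobolev regularity, note that $|\partial_{k}y|^{2} = (U^{2})_{kk}$ pointwise, so $\partial_{1}y, \partial_{2}y \in L^{\infty}(S;\r^{3}) \cap W^{1,2}(S;\r^{3})$. The weak derivative of $\partial_{1}y \wedge \partial_{2}y$ is then a sum of products of the form (second derivative of $y$) times (first derivative of $y$), with one factor in $L^{2}$ and the other in $L^{\infty}$; hence $\partial_{1}y \wedge \partial_{2}y \in W^{1,2}(S;\r^{3})$. Since the formula expresses $\nu$ as a constant-coefficient linear combination of $\partial_{1}y, \partial_{2}y, \partial_{1}y \wedge \partial_{2}y$, this yields $\nu \in W^{1,2}(S;\r^{3})$.

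The main obstacle is not conceptual but computational: one must carefully expand the cross products and invoke entries of $U^{-2}U^{2} = \Id$ to collapse the several algebraic terms. A subtler point, worth underlining, is that in two dimensions the cross product of two $W^{1,2}$ vector fields is a priori only in $W^{1,r}$ for $r<2$; it is precisely the $L^{\infty}$ bound coming from the isometric constraint that is needed to recover the full $W^{1,2}$ regularity of $\nu$.
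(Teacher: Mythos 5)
Your proposal is correct; the existence half is essentially the paper's own computation, while the uniqueness half takes a genuinely different route. For existence, verifying $\partial_k y\cdot\nu=(U^2)_{k3}$, $|\nu|^2=(U^2)_{33}$ and the orientation via the identities hidden in $U^{-2}U^2=\Id$ together with $|\partial_1 y\wedge\partial_2 y|^2=\det((U^2)')=|U^{-1}e_3|^2\det(U^2)$ is exactly what the paper does (it only packages the sign of the determinant differently, factoring $(\nabla y,\nu)=(\partial_1 y,\partial_2 y,\partial_1 y\wedge\partial_2 y)$ times an explicit triangular matrix, whereas you compute the triple product directly -- both are fine). For uniqueness, the paper argues abstractly: two distinct rotations can never differ by a rank-one matrix, while $(0,\nu-\tilde\nu)U^{-1}=(\nu-\tilde\nu)\otimes U^{-1}e_3$ has rank one unless $\nu=\tilde\nu$. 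You instead expand $\nu$ in the a.e.\ frame $\{\partial_1 y,\partial_2 y,\partial_1 y\wedge\partial_2 y\}$ (legitimate, since positive definiteness of $(U^2)'$ gives linear independence of $\partial_1 y,\partial_2 y$ a.e.), observe that the Gram conditions fix the in-plane coefficients, the norm condition fixes the normal coefficient up to sign, and the determinant condition fixes the sign; this is more hands-on but equally valid, and it has the small advantage of not needing the rotation-rank fact. Your explicit treatment of the $W^{1,2}$ regularity of $\nu$ -- using that the isometry constraint puts $\partial_1 y,\partial_2 y$ in $L^\infty$, so the cross product of $W^{1,2}\cap L^\infty$ fields stays in $W^{1,2}$ and $\nu$ is a constant-coefficient combination -- is a point the paper leaves implicit, and your remark that without the $L^\infty$ bound one would only get $W^{1,r}$, $r<2$, is accurate.
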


\begin{proof}
    For the existence, it is enough to prove that $(\nabla y , \nu)^T (\nabla y, \nu) = U^2$ and $\det(\nabla y, \nu) > 0$. By the hypothesis on $y$ we need to prove that
    \begin{enumerate}[(i)]
        \item $\partial_1 y \cdot \nu = (U^2)_{13}$,
        \item $\partial_2 y \cdot \nu = (U^2)_{23}$,
        \item $\nu \cdot \nu = (U^2)_{33}$.
    \end{enumerate}
    For $j =1, 2$ we have
    \begin{align}
        \partial_j y \cdot \nu = -\dfrac{1}{|U^{-1}e_3|^2} \sum_{k=1}^3 (U^{-1}e_k \cdot U^{-1}e_3)(U^2)_{jk} + (U^2)_{j3} = (U^2)_{j3}\,.
     \end{align}
    To complete the proof we observe that
    \[
        |Ue_1 \wedge Ue_2|^2 = |Ue_1|^2|Ue_2|^2 - (U e_1 \cdot U e_2)^2 = |\partial_1 y \land \partial_2 y|^2
    \]
    and, since $U e_1 \land U e_2 = \cof(U) e_3$
    \[
        |Ue_1 \wedge Ue_2|^2 = |\deter(U) U^{-1}e_3|^2 = \deter^2(U) |U^{-1} e_3|^2 \,.
    \]
    We are now ready to conclude:
    \begin{align}
        \nu \cdot \nu & = \dfrac{1}{|U^{-1}e_3|^2} + \dfrac{(U^{-1}e_1 \cdot U^{-1}e_3)}{|U^{-1}e_3|^4}\sum_{k=1}^3 (U^{-1}e_k \cdot U^{-1}e_3)(U^2)_{1k} \\
        & \hphantom{=} + \dfrac{(U^{-1}e_2 \cdot U^{-1}e_3)}{|U^{-1}e_3|^4}\sum_{k=1}^3 (U^{-1}e_k \cdot U^{-1}e_3)(U^2)_{2k} \\
        & \hphantom{=} - \dfrac{1}{|U^{-1}e_3|^2} \sum_{k=1}^3 (U^{-1}e_k \cdot U^{-1}e_3) (U^2)_{3k}  + (U^2)_{33}\\
        & = \dfrac{1}{|U^{-1}e_3|^2} - \dfrac{1}{|U^{-1}e_3|^2} + (U^2)_{33} = (U^2)_{33} \,.
    \end{align}
    To show that $\det(\nabla y, \nu) > 0$, it is sufficient to note that
    \begin{equation}
        (\nabla y, \nu) = (\partial_1 y, \partial_2 y, \partial_1 y \land \partial_2 y) \cdot \begin{pmatrix}
            1 & 0 & -\frac{1}{|U^{-1}e_3|^2}(U^{-1}e_1 \cdot U^{-1}e_3)\\
            0 & 1 & -\frac{1}{|U^{-1}e_3|^2}(U^{-1}e_2 \cdot U^{-1}e_3)\\
            0 & 0 & \frac{\det(U^{-1})}{|U^{-1}e_3|^2}
        \end{pmatrix} \,,
    \end{equation}
    and that the determinant of both matrices in the right-hand side is positive.

    To prove uniqueness we observe that, for any choice of two different rotations $R_1, R_2 \in \SO(3)$, we have $\rank(R_1-R_2) \neq 1$. Indeed, given a vector $v \in \r^3$ we have 
    \[
    	(R_1 - R_2)v = 0 \iff R_1v = R_2 v \iff R_2^T R_1 v = v \,.
    \]
    Since $R_2^T R_1 \in \SO(3)$ and $R_2^TR_1 \neq \Id$, we deduce that $v \in \ker(R_1 - R_2)$ if and only if $v$ is parallel to the rotation axis, that is, $v$ belongs to a $1$-dimensional subspace. In particular, $\rank(R_1 - R_2) = 2$.
    Suppose now that there is another vector $\tilde \nu$ such that
    \[
    	(\nabla y, \tilde \nu)U^{-1} \in \SO(3) \, \, \text{a.e..}
    \]
    Then, 
    \[
    	 (0, \nu - \tilde \nu)U^{-1} = (\nu - \tilde \nu) \otimes U^{-1} e_3
    \]
    coincides a.e. with the difference of two rotations and has rank 1 whenever $\nu \neq \tilde \nu$. Thus, $\nu = \tilde \nu$ almost everywhere.
\end{proof}

We move now to the proof of the first part of \Cref{teo:gamma_convergence_alpha_2}.

\begin{proof}[Proof of \Cref{teo:gamma_convergence_alpha_2}--\ref{item:compactness_alpha_2}]
    By \Cref{cor:compactness} there is a sequence of indices $i_h \in \{1, \dots, l\}$ such that
    \[
        \|\dist(\nabla_h y_h, K_{i_h})\|_{L^2} \leq Ch \,.
    \]
    Upon a further subsequence, since $ i_{h} $ takes values in a finite set, we can suppose $i_h$ to be constant and equal to $j$. Construct the sequences $R_h$ and $\tilde R_h$ as in \Cref{prop:approximated_rotation}. Clearly, $\tilde R_h$ is bounded in $W^{1,2}(S; \r^{3 \times 3})$ thus it converges weakly, at least along a subsequence, to a map $R \in W^{1,2}(S; \r^{3 \times 3})$. Hence, we have $R_h \to R$ in $L^2(S; \SO(3))$, so $R$ takes values in the set of rotations. Consequently, $\nabla_h y_h U_j^{-1} \to R$ in $ L^{2}(S; \r^{3 \times 3}) $. By an application of the Poincaré-Wirtinger inequality we have 
    \[
    	\|y_h - c_h\|_{W^{1, 2}} \leq C\|\nabla_h y_h\|_{L^2} \leq C \,,
    \]
    where
    \[
    	c_h = \dfrac{1}{|\Omega|}\int_\Omega y_h(x) \, dx \,. 
    \]
    Thus, $y_h - c_h$ converges weakly (possibly along a subsequence) to some map $y \in W^{1, 2}(\Omega; \r^3)$. Since $\frac{1}{h} \partial_3 y_h$ is bounded in $L^2(\Omega; \r^3)$, we have $\partial_3 y = 0$ and $y \in W^{1, 2}(S; \r^3)$. Hence, $\nabla y = (RU_j)^{1, 2}$, $y \in W^{2, 2}(S; \r^3)$ and $\nabla_h' y_h \to \nabla y$. Lastly, since $\nu$ is uniquely determined by the condition $(\nabla y, \nu)U_j^{-1} \in \SO(3)$ a.e., the remaining part follows from \Cref{lemma:definition_nu}.
\end{proof}

\begin{cor}\label{cor:convergence_G_alpha_2}
    In the same setting of \Cref{teo:gamma_convergence_alpha_2}, there is a sequence $(R_h) \subset L^\infty(S; \SO(3))$ such that, up to a subsequence 
    \begin{equation}
        G_h := h^{-1}\left(R_h^T \nabla_h y_h - U_j\right) \todeb G \, \, \text{in} \, \, L^2(\Omega; \r^{3 \times 3}) \,. \label{eq:convergence_G_alpha_2}
    \end{equation}
    Moreover, $G^{1,2}$ is affine in $x_3$, that is
    \begin{equation}
        G^{1,2}(x', x_3) = G_0(x') + x_3 G_1(x') \,. \label{eq:linearity_G_alpha_2}
    \end{equation}
    Lastly,
    \begin{equation}
        (U_j G_1)' = \nabla y^T \nabla \nu \label{eq:G1_minor_gamma_equals_1} \,.
    \end{equation}
\end{cor}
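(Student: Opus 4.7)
The plan is to combine the rigidity estimate of \Cref{prop:approximated_rotation} with a distributional argument exploiting that the approximating rotations $R_h$ depend only on $x'$.

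First, pick $j$ as in \Cref{teo:gamma_convergence_alpha_2}\ref{item:compactness_alpha_2} and let $R_h,\tilde R_h$ be the maps from \Cref{prop:approximated_rotation} for this well. Since $E_h^2(y_h)\leq C$, \Cref{cor:compactness} gives $D_{h,j}\leq Ch$, so \ref{item:approximated_rotation_convergence_scaled_gradient} yields $\|R_h^T\nabla_h y_h-U_j\|_{L^2}\leq Ch$; hence $G_h$ is bounded in $L^2(\Omega;\r^{3\times 3})$ and admits a weak $L^2$-limit $G$ along a subsequence. The proof of \Cref{teo:gamma_convergence_alpha_2}\ref{item:compactness_alpha_2} already shows $R_h\to R_0:=(\nabla y,\nu)U_j^{-1}$ in $L^2(S)$, while \ref{item:approximated_rotation_convergence_gradient} and the remaining items of the proposition give $\|\nabla\tilde R_h\|_{L^2}\leq C$ and $\|\tilde R_h-R_h\|_{L^2}\leq Ch$, so $\tilde R_h\to R_0$ in every $L^r(S)$ with $r<\infty$ and $\nabla\tilde R_h\rightharpoonup\nabla R_0$ weakly in $L^2$.

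To prove \eqref{eq:linearity_G_alpha_2}, I would show $\partial_3^2 G^{1,2}=0$ in $\mathcal{D}'(\Omega)$, relying on three facts: $R_h$ is $x_3$-independent; $\partial_3\partial_{j'}y_h^k=h\,\partial_{j'}(\nabla_h y_h)^3_k$; and $(R_h^T\nabla_h y_h)^3=U_je_3+hG_h^3$, whence $R_h^T(\nabla_h y_h)^3\to U_je_3$ strongly in $L^2$. For $\phi\in C_c^\infty(\Omega;\r^{3\times 2})$, an integration by parts in $x_3$ followed by one in $x_{j'}$ turns $\int_\Omega G_h^{1,2}:\partial_3^2\phi\,dx$ into
\[
\int_\Omega (\nabla_h y_h)^3_k\,\partial_{j'}(R_h^T)_{ik}\,\partial_3\phi_{ij'}\,dx+\int_\Omega (R_h^T(\nabla_h y_h)^3)_i\,\partial_{j'}\partial_3\phi_{ij'}\,dx\,.
\]
Replacing $R_h$ with $\tilde R_h$ in the first summand, weak-strong convergence gives $(\nabla_h y_h)^3_k\partial_{j'}(\tilde R_h^T)_{ik}\rightharpoonup \nu^k\,\partial_{j'}(R_0^T)_{ik}$ in $L^1$; its limit is an $x'$-dependent function tested against $\partial_3\phi$, and $\int_I\partial_3\phi\,dx_3=0$ makes it vanish. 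The second summand has limit $(U_je_3)_i\int_\Omega\partial_{j'}\partial_3\phi_{ij'}\,dx=0$ by compact support of $\phi$. The replacement error $R_h-\tilde R_h$, controlled by one further integration by parts in $x_{j'}$ and by interpolation, is bounded by $\|R_h-\tilde R_h\|_{L^{p'}}\|\nabla_h^2 y_h\|_{L^p}\leq Ch^{2/p'}\cdot Ch^{2/p}/\eta(h)=Ch^2/\eta(h)\leq Ch^{4/3}\to 0$ thanks to \ref{item:lower_bound_penalties}.

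The identification \eqref{eq:G1_minor_gamma_equals_1} of $G_1$ is obtained by the same scheme applied to $\int_\Omega G_h^{1,2}:\partial_3\phi\,dx$, yielding $\partial_3 G^{1,2}=R_0^T\nabla\nu$ in $\mathcal{D}'$. The key algebraic input is the identity $R_0^T\nu=U_je_3$, which follows from $(\nabla y,\nu)^T(\nabla y,\nu)=U_j^2$ and implies $\partial_{j'}(R_0^T)\nu=-R_0^T\partial_{j'}\nu$; this converts the limit of the first summand into $\int_\Omega R_0^T\nabla\nu:\phi\,dx$. Hence $G_1=R_0^T\nabla\nu$, and since $U_j$ is symmetric, $U_jG_1=(\nabla y,\nu)^T\nabla\nu$, whose first two rows are $\nabla y^T\nabla\nu$. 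The main obstacle throughout is the weak regularity of $R_h$, which is only in $L^\infty(S)$; this is precisely what $\tilde R_h$ is designed to overcome, with the penalty bound \ref{item:lower_bound_penalties} ensuring that the $O(h)$ mismatch between the two is absorbed.
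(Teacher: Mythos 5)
Your argument is correct but proceeds by a genuinely different route than the paper's. The paper tests difference quotients $H_h^s:=s^{-1}(G_h(\cdot,\cdot+s)-G_h)$ against the rotation field: since $R_hG_h=h^{-1}(\nabla_h y_h-R_hU_j)$ and $R_hU_j$ is $x_3$-independent, the entries $(R_hH_h^s)_{\alpha\beta}$ for $\beta=1,2$ become $\tfrac{1}{s}\partial_\beta\int_0^s\tfrac{1}{h}\partial_3 y_{h,\alpha}(\cdot,\cdot+\sigma)\,d\sigma$, so the affinity in $x_3$ and the identity $(U_jG_1)'=\nabla y^T\nabla\nu$ follow by passing to the limit in the duality $L^2\hookrightarrow(W_0^{1,2})^\ast$, without ever differentiating $R_h$ in $x'$. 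You instead show $\partial_3^2 G^{1,2}=0$ in $\mathcal{D}'(\Omega)$ by two integrations by parts, which forces a derivative onto the rotation; since $R_h$ is only $L^\infty(S)$, you swap it for the Sobolev approximation $\tilde R_h$ and absorb the mismatch through the penalty term, $\|R_h-\tilde R_h\|_{L^{p'}}\|\nabla_h^2 y_h\|_{L^p}\leq Ch^2/\eta(h)\to 0$ by \ref{item:lower_bound_penalties}. Both arguments ultimately rest on the same cancellation $R_0^T\nu=U_je_3$ (which in the paper's phrasing is the strong convergence of $(\nabla_h y_h)^3$ to $\nu$). The paper's route is lighter because it never invokes the second-gradient term and bypasses $\tilde R_h$ entirely; yours is conceptually more direct (pure distributional derivatives) at the cost of this extra bookkeeping. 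Two minor fixes. First, the interpolation $\|R_h-\tilde R_h\|_{L^{p'}}\le Ch^{2/p'}$ between \ref{item:approximated_rotation_convergence_scaled_gradient}--(R4) requires $p'\ge 2$, i.e.\ $p\le 2$; for $p>2$ one should instead use $\|R_h-\tilde R_h\|_{L^{p'}}\le C\|R_h-\tilde R_h\|_{L^2}\le Ch$, which still kills the error via \ref{item:lower_bound_penalties}. Second, the limit of the first summand in the $G_1$-identification carries a minus sign, $-\int_\Omega R_0^T\nabla\nu:\phi\,dx$, which is what matches $\int_\Omega G^{1,2}:\partial_3\phi\,dx=-\int_\Omega G_1:\phi\,dx$ and gives your stated $G_1=R_0^T\nabla\nu$.
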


\begin{proof}
    Arguing as in the proof of \Cref{teo:gamma_convergence_alpha_2}--\ref{item:compactness_alpha_2} we have 
    \[
        \|\dist(\nabla_h y_h, K_{j})\|_{L^2} \leq Ch \,,
    \]
    for some $j \in \{1, \dots, l\}$. Let $R_h \in L^\infty(S; \SO(3))$ be the map given by \Cref{prop:approximated_rotation}. Convergence \eqref{eq:convergence_G_alpha_2} follows from \ref{item:approximated_rotation_convergence_scaled_gradient}. Moreover, arguing again as in the proof of \Cref{teo:gamma_convergence_alpha_2}--\ref{item:compactness_alpha_2} we deduce that, up to subsequences, $R_h \to R \in L^2(S; \SO(3))$ and $\nabla_h y_h U_j^{-1} \to (\nabla y, \nu)U_j^{-1} = R$ in $L^2(\Omega; \r^{2 \times 2})$, where $\nu$ is given by \Cref{lemma:definition_nu}. Define
    \[
        H^s_h(x', x_3) = \dfrac{1}{s}(G_h(x',x_3+s) - G_h(x', x_3)) \,,
    \]
    for $s$ such that $x_3 + s \in I$. For $\alpha = 1, 2, 3$ and $\beta = 1, 2$ we have
    \begin{align}
        {(R_h(x') H_h^s(x', x_3))}_{\alpha \beta} &= \dfrac{1}{s}h^{-1}\left(\dpartial{y_{h, \alpha}}{x_\beta}(x', x_3 + s) - \dpartial{y_{h, \alpha}}{x_\beta}(x', x_3)\right) = \nonumber \\
        & = \dfrac{1}{s}\dpartial{}{x_\beta} \int_0^s \dfrac{1}{h} \dpartial{y_{h, \alpha}}{x_3}(x', x_3 + \sigma) \, d\sigma\,. \label{eq:integral_form_affinity}
    \end{align}
    The right-hand side converges strongly in $(W^{1,2}_0(\Omega))^\ast$ to $(\nabla \nu)_{\alpha \beta}$ as $h \to 0$.
    Indeed, one has that $ \|\partial_{i}g\|_{(W^{1,2}_{0})^{\ast}} \leq \|g\|_{L^{2}} $ for every $ g \in L^{2}(\Omega) $, where $ \|\cdot\|_{(W_{0}^{1, 2})^{\ast}} $ is the standard operatorial norm.
    The left-hand side converges weakly in $L^2(\Omega)$ to
    \begin{equation}
        {((\nabla y, \nu) U_j^{-1} H^s(x', x_3))}_{\alpha \beta} \,,
    \end{equation}
    where
    \begin{equation}
        {H^s(x', x_3)}_{\alpha \beta} = \dfrac{1}{s}({G(x', x_3+s)}_{\alpha \beta} - {G(x', x_3)}_{\alpha \beta}) \,.
    \end{equation}
    Since $L^2(\Omega)$ is continuously embedded in $(W^{1,2}_0(\Omega))^\ast$ we obtain 
    \[
        {H^s(x', x_3)}_{\alpha \beta} = {(U_j^{-1}(\nabla y, \nu)^T \nabla \nu)}_{\alpha \beta} \quad \forall \, \alpha =1, 2, 3 \quad \forall \, \beta = 1, 2 \,.
    \]
    In particular, the first two columns of $H^s$ are independent of $x_3$ and so the first two columns of $G$ are affine in $x_3$. Finally, we have
    \[
        (U_j G_1)' = ((\nabla y, \nu)^T \nabla \nu)_{1, 2} = \nabla y^T \nabla \nu \,,
    \]
    that proves \eqref{eq:G1_minor_gamma_equals_1}.
\end{proof}

\subsection{Compactness for the regime \texorpdfstring{$\bm{\alpha > 2}$}{\unichar{"03B1} > 2}}

In the case $\alpha > 2$ we will write the $\Gamma$-limit as a function of the in-plane and out-of-plane displacements. The next results give the correct scaling to extract their convergence. We start with a preliminary lemma.

\begin{lemma}\label{lemma:rotation_to_skew_matrix}
    Let $(F_h) \subset \r^{3 \times 3}$ be a sequence such that $|F_h - \Id| \leq Ch^\beta$ for some $\beta > 0$. Then there is a sequence $(P_h) \subset \SO(3)$ such that $\skw(P_h F_h) = 0$ and $|P_h - \Id| \leq Ch^\beta$.
\end{lemma}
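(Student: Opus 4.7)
The natural approach is polar decomposition. For $h$ small enough the assumption $|F_h - \Id| \leq C h^\beta$ makes $F_h$ invertible, so it admits a unique right polar decomposition $F_h = R_h S_h$ with $R_h \in \SO(3)$ and $S_h = (F_h^T F_h)^{1/2}$ symmetric positive definite. Setting $P_h := R_h^T$ we immediately get $P_h F_h = S_h$, which is symmetric, hence $\skw(P_h F_h) = 0$. The only real content is the quantitative bound $|P_h - \Id| \leq C h^\beta$.

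To obtain this bound, I would first estimate $S_h$. Writing $F_h = \Id + E_h$ with $|E_h| \leq C h^\beta$, we have
\begin{equation}
F_h^T F_h = \Id + E_h + E_h^T + E_h^T E_h = \Id + A_h, \qquad |A_h| \leq C h^\beta.
\end{equation}
Since the matrix square root is analytic in a neighborhood of $\Id$, for $h \ll 1$ we can expand $S_h = (\Id + A_h)^{1/2} = \Id + \tfrac{1}{2}A_h + O(|A_h|^2)$, which gives $|S_h - \Id| \leq C h^\beta$. Consequently $S_h^{-1} = \Id + O(h^\beta)$, and
\begin{equation}
R_h = F_h S_h^{-1} = (\Id + E_h)(\Id + O(h^\beta)) = \Id + O(h^\beta),
\end{equation}
which yields $|P_h - \Id| = |R_h^T - \Id| = |R_h - \Id| \leq C h^\beta$.

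There is no serious obstacle here beyond making the expansion of the square root rigorous; this can be done either by the explicit power series $(\Id + A)^{1/2} = \sum_k \binom{1/2}{k} A^k$, which converges for $|A| < 1$, or by invoking smoothness of the map $B \mapsto B^{1/2}$ on a neighborhood of $\Id$ in the space of symmetric positive definite matrices. An equivalent, perhaps more conceptual, route is to apply the implicit function theorem to the multiplication map $\SO(3) \times \r^{3 \times 3}_{\sym} \to \r^{3\times 3}$, $(R, S) \mapsto RS$: its differential at $(\Id, \Id)$ is $(W, T) \mapsto W + T$, which is an isomorphism thanks to the direct-sum decomposition $\r^{3 \times 3} = \r^{3 \times 3}_{\skw} \oplus \r^{3 \times 3}_{\sym}$. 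This gives a local $C^\infty$ inverse $F \mapsto (R(F), S(F))$ defined near $\Id$, from which the Lipschitz estimate $|R(F_h) - \Id| \leq C |F_h - \Id| \leq C h^\beta$ follows at once.
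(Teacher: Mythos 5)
Your argument is correct, but it follows a different route from the paper's. You estimate $S_h = (F_h^T F_h)^{1/2}$ by expanding the matrix square root near the identity (or, in the alternative you sketch, by invoking the implicit function theorem for the multiplication map $\SO(3)\times\r^{3\times3}_{\sym}\to\r^{3\times3}$), and then bound $R_h = F_h S_h^{-1}$. The paper instead avoids any expansion: it proves that the polar rotation is the nearest orthogonal matrix to $F_h$, i.e. $|F_h - R_h|\leq|F_h - X|$ for every $X\in\Ort(3)$, by diagonalizing the symmetric factor and using that orthogonal matrices have diagonal entries bounded by $1$. Taking $X=\Id$ gives $|F_h-R_h|\leq|F_h-\Id|$, and a single triangle inequality $|R_h-\Id|\leq|R_h-F_h|+|F_h-\Id|\leq 2Ch^\beta$ finishes. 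The paper's argument is slightly more elementary (pure linear algebra plus the triangle inequality, no analyticity or local smoothness of the square root needed) and also yields an explicit constant $2C$; your argument is a bit more computational but equally valid, and the implicit-function-theorem version has the merit of being conceptually transparent about why the bound is Lipschitz. Both are standard and acceptable.
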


\begin{proof}
For $h \ll 1$ the matrix $F_h$ is invertible with positive determinant and so its polar decomposition $F_h = R_h A_h$ is uniquely determined providing a matrix $P_h \in \SO(3)$ (i.e., $P_h = R_h^T$) such that $P_h F_h$ is symmetric. We have that 
\begin{equation}\label{eq:projection_polar_decomposition}
    |F_h - R_h| \leq |F_h - X| \quad \forall \, X \in \Ort(3) = \{B \in \r^{3 \times 3} \, : \, \det(B) = \pm 1 \} \,.
\end{equation}
Indeed, since $F_h = R_h A_h$, one has
\begin{equation}
    |F_h  - R_h|^2 = \Tr((F_h - R_h)^T(F_h - R_h)) = |F_h|^2 - 2\Tr(A_h) + 3\,,
\end{equation}
and similarly
\begin{equation}
    |F_h  - X|^2 = |F_h|^2 - 2\Tr(F_h^T X) + 3\,.
\end{equation}
Since $A_h$ is symmetric and positive definite we can write $A_h = O_h\Sigma_hO_h^T$ for some $O_h \in \Ort(3)$ and $\Sigma_h = \diag(\sigma_{h})$, with $\sigma_{h, i} > 0$. Thus, since the trace is invariant under circular shifts
\begin{align}
    |F_h  - R_h|^2 - |F_h  - X|^2 & = 2\Tr(F_h^T X - A_h) = 2\Tr(O_h \Sigma_h O_h^T R_h^T X - O_h \Sigma_h O_h^T) \\
    & = 2\Tr(\Sigma_h O_h^T R_h^T X O_h - \Sigma_h) = 2 \Tr(\Sigma_h Y_h - \Sigma_h) \\
    & = 2\sum_{i=1}^3 \sigma_{h, i}((Y_h)_{ii} - 1) \leq 0\,,
\end{align}
where $Y_h = O_h^T R_h^T X O_h \in \Ort(3)$.
Hence, by \eqref{eq:projection_polar_decomposition},
\[
    |P_h - \Id| = |R_h - \Id| \leq |R_h - F_h| + |F_h - \Id| \leq Ch^\beta \,.
\]
\end{proof}

\begin{prop}\label{prop:rescaling_convergence}
    Let $(y_h) \subset W^{1,2}(\Omega; \r^3)$ be a sequence of deformations such that
    \[
        \limsup_{h \to 0} E_h^\alpha(y_h) \leq C\,,
    \]
    where $\alpha > 2$. Then, for $h \ll 1$ there are an index $j \in \{1, \dots, l\}$, constant rotations $\bar R_h \in \SO(3)$, and constant vectors $c_h \in \r$ such that, setting $\tilde y_h$ as follows
    \begin{equation}
        \tilde y_h = \bar R_h y_h + c_h \,,
    \end{equation}
    there exist $\tilde R_h \in W^{1,2}(S, \SO(3))$ that satisfies:
    \begin{align}
        & \|\nabla_h \tilde y_h - \tilde R_h U_j\|_{L^2(\Omega)} \leq C h^{\gamma} \label{eq:convergence_rescaled_gradient} \,,\\
        & \|\tilde R_h - \Id\|_{L^2(S)} \leq Ch^{\gamma-1} \label{eq:convergence_approximated_rotation} \,,\\
        & \|\nabla \tilde R_h\|_{L^2(S)} \leq Ch^{\gamma-1} \label{eq:convergence_gradient_approximated_rotation} \,.
    \end{align}
    Moreover, there exists $A \in W^{1,2}(S, \r^{3 \times 3}_{\skw})$ such that the following convergences hold true, possibly along a non-relabeled subsequence:
    \begin{align}
        & A_h := h^{1-\gamma}(\tilde R_h - \Id) \todeb A && \text{in} \, \, W^{1,2}(S; \r^{3 \times 3}) \label{eq:convergence_Ah} \,,\\
        & h^{1-\gamma}(\nabla_h \tilde y_h - U_j) \to AU_j && \text{in} \, \, L^2(\Omega; \r^{3 \times 3}) \label{eq:convergence_rescaled_gradient_U} \,,\\
        & h^{2-2\gamma}\sym(\tilde R_h - \Id) \to \dfrac{A^2}{2} && \text{in} \, \, L^2(S; \r^{3 \times 3}) \label{eq:convergence_symmetric_part}\,.
    \end{align}
   	Lastly, convergences \ref{item:convergence_u}-\ref{item:convergence_v} of \Cref{teo:gamma_convergence}--\ref{item:compactness} hold true and $A$ has the following structure:
    \begin{equation}
        U_j A U_j = e_3 \otimes \nabla v - \nabla v \otimes e_3 \label{eq:characterization_A}\,.
    \end{equation}
\end{prop}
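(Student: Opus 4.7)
The plan is the following. Since $E_h^\alpha(y_h) \leq C$ implies $h^\alpha E_h^\alpha(y_h) \to 0$, Corollary 3.4 gives an index $i_h \in \{1,\dots,l\}$ with $\|\dist(\nabla_h y_h, K_{i_h})\|_{L^2} \leq Ch^\gamma$. Passing to a subsequence, we may take $i_h \equiv j$. Since $D_{h,j} \leq Ch^\gamma$ and $\gamma > 1$ (hence $h^{-1}D_{h,j}\to 0$), Proposition 3.5 provides an approximating rotation field $R_h \in W^{1,2}(S; \SO(3))$ and a constant rotation $Q_h \in \SO(3)$ with $\|\nabla_h y_h - R_h U_j\|_{L^2} \leq Ch^\gamma$, $\|\nabla R_h\|_{L^2} \leq Ch^{\gamma-1}$, and $\|R_h - Q_h\|_{L^2} \leq Ch^{\gamma-1}$. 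Set tentatively $\bar R_h := Q_h^T$, $\tilde R_h := Q_h^T R_h$, and pick $c_h\in\r^3$ so that $\tilde w_h$ has zero mean on $S$. Estimates \eqref{eq:convergence_rescaled_gradient}--\eqref{eq:convergence_gradient_approximated_rotation} then follow, $A_h := h^{1-\gamma}(\tilde R_h - \Id)$ is bounded in $W^{1,2}(S; \r^{3\times 3})$, and up to a subsequence $A_h \todeb A$ in $W^{1,2}$ and $A_h \to A$ strongly in every $L^p$, $p < \infty$, by Rellich's theorem in dimension two.

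The rotational constraint $\tilde R_h^T \tilde R_h = \Id$ gives $\sym(A_h) = -\tfrac{1}{2} h^{\gamma-1}A_h^T A_h$; $L^4$-boundedness of $A_h$ sends the right-hand side to $0$ in $L^2$, so $A \in W^{1,2}(S;\r^{3\times 3}_{\skw})$. Multiplying by $h^{1-\gamma}$ yields \eqref{eq:convergence_symmetric_part}: $h^{2-2\gamma}\sym(\tilde R_h - \Id) = -\tfrac12 A_h^T A_h \to \tfrac12 A^2$ in $L^2$. For \eqref{eq:convergence_rescaled_gradient_U}, write
\[
h^{1-\gamma}(\nabla_h \tilde y_h - U_j) = A_h U_j + h^{1-\gamma}(\nabla_h \tilde y_h - \tilde R_h U_j) = A_h U_j + O(h) \quad \text{in } L^2.
\]
Averaging in $x_3$ gives $h^{1-\gamma}\partial_\alpha \tilde w_h \to AU_j e_\alpha$ in $L^2(S)$; pairing with $U_j e_3$ yields $\partial_\alpha v_h \to (U_j A U_j)_{3\alpha}$, which via Poincaré--Wirtinger and the zero-mean normalization delivers $v_h \to v$ in $W^{1,2}(S)$.

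The heart of the proof, and the main obstacle, is \eqref{eq:characterization_A}. From commutativity of partial derivatives applied to $\tilde y_h$ and \eqref{eq:convergence_rescaled_gradient_U}, in the distributional limit I get $(\partial_1 A) U_j e_2 = (\partial_2 A) U_j e_1$ on $S$. Setting $M := U_j A U_j$ and left-multiplying by $U_j$ yields $(\partial_1 M)e_2 = (\partial_2 M)e_1$. Writing $M = \tilde\omega \times$ in cross-product form and expanding component-wise forces $\partial_\alpha \tilde\omega_3 = 0$ for $\alpha = 1, 2$, so $M_{12} = -\tilde\omega_3$ is a constant $\lambda \in \r$ on $S$. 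To absorb this constant I correct $\bar R_h$: replacing it by $e^{h^{\gamma-1}\sigma}\bar R_h$ for a constant skew $\sigma$, the identity $h^{1-\gamma}(e^{h^{\gamma-1}\sigma}-\Id) \to \sigma$ shifts the limit to $A+\sigma$, hence $M$ to $M + U_j \sigma U_j$. Since $\sigma \mapsto U_j\sigma U_j$ is a linear isomorphism of $\r^{3\times 3}_{\skw}$ (as $U_j$ is invertible), I choose $\sigma$ with $(U_j\sigma U_j)_{12} = -\lambda$ and $(U_j\sigma U_j)_{3\alpha} = 0$ for $\alpha=1,2$; this annihilates $M_{12}$ without perturbing the last row of $M$ (so $v$ is preserved). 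The resulting skew $M$ has only its $(3,\alpha)/(\alpha,3)$ entries nonzero, equal to $\pm\partial_\alpha v$, which is exactly \eqref{eq:characterization_A}. Finally, with \eqref{eq:characterization_A} in force, the would-be leading term $h^{\gamma-1}(U_j A U_j)_{\alpha\beta}$ in the expansion of $\partial_\beta \tilde w_h \cdot U_j e_\alpha$ (for $\alpha,\beta\in\{1,2\}$) vanishes, and the next-order contribution from $\sym(A_h) = O(h^{\gamma-1})$ combined with strong $L^p$-convergence of $A_h$ gives $\partial_\beta \tilde w_h \cdot U_j e_\alpha = O(h^{2(\gamma-1)})$; multiplying by the scaling factor $\min(h^{-\gamma},h^{2-2\gamma})$ produces a sequence $u_h$ bounded in $W^{1,2}(S;\r^2)$, and Poincaré--Wirtinger together with weak compactness yield $u_h \todeb u$, completing convergence \ref{item:convergence_u} of \Cref{teo:gamma_convergence}--\ref{item:compactness}.
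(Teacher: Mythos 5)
Your proof is correct up to and including the derivation of \eqref{eq:characterization_A} modulo a constant (the PDE argument that $(\partial_1 M)e_2=(\partial_2 M)e_1$ forces $M_{12}$ to be a constant $\lambda$, together with the constant-rotation correction that annihilates it, is valid and differs interestingly from the paper), but the final step — boundedness of $u_h$ in $W^{1,2}$ — contains a genuine gap, and it is precisely here that the paper introduces an ingredient you have skipped.

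You claim that once $(U_j A U_j)'=0$ holds, the decomposition
\[
\partial_\beta w_h \cdot U_j e_\alpha = h^{\gamma-1}\bigl(U_j A_h U_j\bigr)_{\alpha\beta} + O(h^{\gamma}), \qquad \alpha,\beta\in\{1,2\},
\]
together with $\sym(A_h)=O(h^{\gamma-1})$ gives $O(h^{2(\gamma-1)})$. But the skew part of $(U_j A_h U_j)'$ is \emph{not} $O(h^{\gamma-1})$: you only know that $\skw(A_h)\to A$ strongly in $L^{2}$, with no quantitative rate, so $(U_j\skw(A_h)U_j)_{\alpha\beta}\to 0$ merely as $o(1)$. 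Hence $\partial_\beta w_h \cdot U_j e_\alpha = o(h^{\gamma-1})$, and after multiplying by $\mu_h := \min\{h^{-\gamma},h^{2-2\gamma}\}$ the estimate $\mu_h\,o(h^{\gamma-1})$ is unbounded for every $\gamma>1$ since $\mu_h h^{\gamma-1}\to\infty$. Your constant-in-$h$ correction by $\sigma$ cannot repair this, because it can only shift the \emph{limit} $A$, not produce a rate of convergence for $A_h$.

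The paper closes this gap differently: it uses \Cref{lemma:rotation_to_skew_matrix} to further adjust $\bar R_h$ by an $h$-dependent rotation $P_h$ so that the \emph{finite-$h$} normalization
\[
\int_{\Omega}\skw\bigl(\nabla_h\tilde y_h\, U_j^{-1}\bigr)\,dx = 0
\]
holds. This forces $\int_S\skw(\nabla u_h)\,dx=0$, after which Korn's second inequality gives $\|u_h\|_{W^{1,2}}\leq C\|\sym(\nabla u_h)\|_{L^2}$, and the symmetric part is controlled by $Ch^{\gamma}+Ch^{2\gamma-2}$ via \eqref{eq:convergence_rescaled_gradient} and \eqref{eq:convergence_symmetric_part}, which is exactly what $\mu_h$ cancels. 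Note also that the logical order is reversed in the paper: boundedness of $u_h$ is established \emph{first} (via Korn), and only then is $(U_jAU_j)'=0$ deduced from the fact that $h^{1-\gamma}\max\{h^\gamma,h^{2\gamma-2}\}\nabla u_h\to(U_jAU_j)'$ with vanishing prefactor. Your order (structure of $A$ first, boundedness second) cannot work without a quantitative mechanism controlling the skew part of $\nabla u_h$, and the $h$-dependent normalization plus Korn is the needed tool.
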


\begin{proof}
    Up to a subsequence extraction, by \Cref{cor:compactness} we have, for some index $j \in \{1, \dots, l\}$, that 
    \[
        D_{h, j} = \|\dist(\nabla_h y_h, K_j) \|_{L^2(\Omega)} \leq Ch^{\gamma} \,,
    \]
    where we recall that $\gamma = \frac{\alpha}{2}$. Let $Q_h$ and $R_h$ be, respectively, the constant rotation and the map whose existence is guaranteed by \Cref{prop:approximated_rotation}. Since $h^{-1}D_{h, j} \to 0$ we can suppose that $R_h \in W^{1,2}(S; \SO(3))$. Let $\tilde y_h = Q_h^T y_h $. By definition $\tilde y_h$ and $\tilde R_h = Q_h^T R_h$ satisfy \eqref{eq:convergence_rescaled_gradient}, \eqref{eq:convergence_approximated_rotation} and \eqref{eq:convergence_gradient_approximated_rotation}. Let
    \begin{equation}
        F_h = \dfrac{1}{|\Omega|}\dint_{\Omega} \nabla_h \tilde y_h {U}^{-1}_j \, dx \,.
    \end{equation}
    From \eqref{eq:convergence_rescaled_gradient} and \eqref{eq:convergence_approximated_rotation} we can deduce
    \begin{align}
        |F_h - \Id| & = \left|F_h - \dfrac{1}{|\Omega|}\int_{\Omega} \Id \, dx \right| \leq \dfrac{1}{|\Omega|}\int_{\Omega} |\nabla_h \tilde y_h {U_j}^{-1} - \Id| \, dx \\
        & \leq Ch^{\gamma -1} \,.
    \end{align}
    Therefore, by \Cref{lemma:rotation_to_skew_matrix} there exists a rotation $P_h \in \SO(3)$ such that $|P_h - \Id| \leq Ch^{\gamma -1}$ and
    \begin{equation}
        \int_{\Omega} \skw\left(P_h \nabla_h \tilde y_h  {U}_j^{-1}\right) \, dx = 0 \,.
    \end{equation}
    Therefore, redefining $\tilde y_h = P_h Q_h^T y_h$ and $\tilde R_h = P_h Q_h^T R_h$ we can additionally suppose that:
    \begin{equation}
        \int_{\Omega} \skw\left(\nabla_h \tilde y_h {U}_j^{-1}\right) \, dx = 0 \,. \label{eq:korn_condition}
    \end{equation}
    Moreover, we can choose the additive constant vector $c_h$ so that
    \begin{equation}
        \int_\Omega \left(\tilde y_h - U_j \begin{pmatrix}
            x' \\ h x_3
        \end{pmatrix}
        \right) \, dx = 0 \,. \label{eq:poincare_wirtinger_condition}
    \end{equation}
    From \eqref{eq:convergence_approximated_rotation} we deduce that $\|A_h\|_{L^2} \leq C$. Moreover, since $\nabla A_h = h^{1-\gamma} \nabla \tilde R_h$, by \eqref{eq:convergence_gradient_approximated_rotation} we have that $A_h$ is bounded in $W^{1,2}(S; \r^{3 \times 3})$. Hence, up to a subsequence, there is $A \in W^{1,2}(S; \r^{3 \times 3})$ such that \eqref{eq:convergence_Ah} holds true. By \eqref{eq:convergence_rescaled_gradient} and \eqref{eq:convergence_Ah} we get \eqref{eq:convergence_rescaled_gradient_U}. Using the well-known identity
    \begin{equation}
        {(Q - \Id)}^T(Q - \Id) = -2\sym(Q - \Id) \quad \forall \, Q \in \SO(3) \,,
    \end{equation}
    we obtain
    \begin{equation}
        \sym(A_h) = \dfrac{1}{2}h^{1-\gamma}{(\tilde R_h - \Id)}^T(\tilde R_h - \Id) \to 0 = \sym(A) \, \, \text{in} \, \, L^2(S; \r^{3 \times 3}) \,,
    \end{equation}
    that is, $A = - A^T$. In particular, we have
    \begin{equation}
        h^{2-2\gamma} \sym(\tilde R_h - \Id) = - \dfrac{1}{2}A_h^T A_h \to \dfrac{A^2}{2} \, \, \text{in} \, \, L^2(S; \r^{3 \times 3}) \,,
    \end{equation}
    that is precisely \eqref{eq:convergence_symmetric_part}. 
    
    To simplify the notation, we will write $U$ instead of $U_j$ for the rest of the proof. Consider $v_h$ as defined in \eqref{eq:definition_v} for the deformation $\tilde y_h$. By \eqref{eq:poincare_wirtinger_condition} and the Poincaré-Wirtinger inequality it follows that
    \begin{align}
        \|v_h\|_{W^{1,2}(S)}^2 & \leq C\|\nabla v_h\|_{L^2(S; \r^2)}^2 \leq Ch^{2-2\gamma}\int_{\Omega} |U\left(\nabla' \tilde y_h(x) - U^{1,2}\right)|^2 \, dx\\
        & \leq Ch^{2-2\gamma}\int_{\Omega} \left|\nabla_h' \tilde y_h - U^{1,2}\right|^2 \, dx \leq Ch^{2-2\gamma}\int_{\Omega} |\nabla_h \tilde y_h - U|^2 \, dx \leq C \,.
    \end{align}
    Hence, up to a subsequence, there is a map $v \in W^{1,2}(S)$ such that $v_h \todeb v$ in $W^{1,2}(S)$. Now, we want to show that $\nabla v_h \to \nabla v = e_3^T{(UAU)}^{1,2}$ from which we can deduce that $v \in W^{2,2}(S)$. Clearly,
    \[
        \nabla v_h = h^{1-\gamma} \int_{\Omega} e_3^TU\left(\nabla' \tilde y_h - U^{1,2}\right) \, dx \,.
    \]
    By \eqref{eq:convergence_rescaled_gradient_U}, it follows that
    \begin{equation}\label{eq:structure_gradient_v}
        \nabla v_h \to \nabla v = e_3^T{(UAU)}^{1,2} \,.
    \end{equation}
    Now we focus on the map $u_h$ defined as in \eqref{eq:definition_u}. We have
    \begin{align}
        \int_{\Omega} \skw(\nabla u_h) \, dx & = \min \left\{h^{-\gamma} \,, h^{2 - 2\gamma}\right\} \int_\Omega \skw\left(U_{1,2}(\nabla' \tilde y_h - U^{1,2})\right) \, dx\\
        & = \min \left\{h^{-\gamma} \,, h^{2 - 2\gamma}\right\} \int_\Omega U_{1,2}\skw\left(\nabla_h \tilde y_h U^{-1} - \Id\right) U^{1,2} \, dx\\
        & =\min \left\{h^{-\gamma} \,, h^{2 - 2\gamma}\right\} U_{1,2} \left[ \int_\Omega \skw\left(\nabla_h \tilde y_h U^{-1} - \Id \right) \, dx \right] U^{1,2}
    \end{align}
    and the last term is identically zero by \eqref{eq:korn_condition}. Therefore, we can apply Korn's inequality to deduce
    \begin{align}
        \|u_h\|_{W^{1,2}}^2 & \leq C \|\sym(\nabla u_h)\|_{L^2(S; \r^{2 \times 2})}^2 \\
        & \leq C \min \left\{h^{-2\gamma} \,, h^{4 - 4\gamma}\right\} \int_\Omega \left|\sym(\nabla_h \tilde y_h U^{-1}  - \Id)\right|^2 \, dx \\
        & \leq C \min \left\{h^{-2\gamma} \,, h^{4 - 4\gamma}\right\} \left[ \|\nabla_h \tilde y_h U^{-1} - \tilde R_h\|_{L^2}^2 + \|\sym(\tilde R_h - \Id)\|_{L^2}^2 \right] \\
        & \leq C \min \left\{h^{-2\gamma} \,, h^{4 - 4\gamma}\right\} \max\left\{h^{2\gamma} \,, h^{4\gamma-4}\right\} \leq C \,,
    \end{align}
    where we have used \eqref{eq:convergence_rescaled_gradient} and \eqref{eq:convergence_symmetric_part}. This proves the weak convergence of $u_h$, up to subsequences. Reasoning as before it is easy to note that
    \begin{equation}
        h^{1-\gamma} \max \left\{h^{\gamma} \,, h^{2\gamma-2}\right\} \nabla u_h \to U_{1,2} A U^{1,2} = {(U A U)}' \,.
    \end{equation}
    By the assumption $\gamma = \frac{\alpha}{2} > 1$ we have that $h^{1-\gamma} \max \left\{h^{\gamma} \,, h^{2\gamma-2}\right\} \to 0$, so $(U A U)' = 0$. Since $A$ is skew-symmetric, so is $U A U$ and this shows \eqref{eq:characterization_A} by \eqref{eq:structure_gradient_v}.
\end{proof}

\begin{remark}\label{remark:basis_u_v}
    In our setting $u_h$ and $v_h$ are the (suitably rescaled) components of the displacement $w_h$ with respect to the reference configuration $U_j \Omega$ in the basis $\{U_j^{-1} e_i \colon i = 1, 2, 3\}$, that is,
    \begin{equation}
        w_h = U_j^{-1}\begin{pmatrix}
            \max\{h^\gamma, h^{2 \gamma - 2}\} u_h \\
            h^{\gamma - 1} v_h
        \end{pmatrix} \,.
    \end{equation}
    Note that a basis of the tangent space to the midplane $U_j(S \times \{0\})$ is given by $\{U_j e_1, U_je_2\}$, while the normal direction is $U_j^{-1}e_3$. Thus, it may look more natural to define the in-plane and the out-of-plane displacement in terms of this basis, that is, to consider $\tilde u_h$ and $\tilde v_h$ such that
    \begin{equation}
        w_h = U_j \begin{pmatrix}
            \tilde u_h \\ 0
        \end{pmatrix} + \tilde v_h U_j^{-1} e_3 \,.
    \end{equation} 
    It is easy to see that
    \begin{align}
        u_h & = \min\{h^{-\gamma}, h^{2 - 2 \gamma}\} (U_j^2)' \tilde u_h \,,\\
        v_h & = h^{1 - \gamma} \left( \tilde v_h + (U_j^2)^{1, 2} \begin{pmatrix}
            \tilde u_h \\ 0
        \end{pmatrix} \cdot e_3\right) \,,
    \end{align}
    so that $h^{1-\gamma} \tilde v_h$ has the same limit as $v_h$, while $\min\{h^{-\gamma}, h^{2-2\gamma}\} \tilde u_h$ converges to some $\tilde u$, representing the same displacement as $u$ expressed in a different basis.
    Note that the same argument would apply defining $ \tilde u_{h} $ in terms of a basis of the form $ \{v_{1}, v_{2}, U_{j}^{-1}e_{3}\} $, with $ v_{1}, v_{2} \in \vspan \{U_{j}e_{1}, U_{j}e_{2}\}$.
\end{remark}

\begin{cor}\label{cor:convergence_G}
    In the same notation and hypothesis of \Cref{prop:rescaling_convergence}, there exists a map $G \in L^2(\Omega, \r^{3 \times 3})$ such that, up to a subsequence,
    \begin{equation}
        G_h := h^{-\gamma}\left(\tilde R_h^T \nabla_h \tilde y_h - U_j\right) \todeb G \, \, \text{in} \, \, L^2(\Omega; \r^{3 \times 3}) \,. \label{eq:convergence_G}
    \end{equation}
    Moreover, $G^{1,2}$ is affine in $x_3$, that is
    \begin{equation}
        G^{1,2}(x', x_3) = G_0(x') + x_3 G_1(x') \,. \label{eq:linearity_G}
    \end{equation}
    Finally,    
    \begin{equation}
        (U_j G_1)' = -\nabla^2 v\label{eq:G1_minor_gamma_greater_1} 
    \end{equation}
    and
    \begin{align}
        & (\sym(U_j G_0))' = \sym(\nabla u) && \text{if} \, \, \alpha > 4\,, \label{eq:G0_minor_b_greater_2} \\
        & (\sym(U_j G_0))' = \sym(\nabla u) + \dfrac{1}{2}|U_j^{-1} e_3|^2 \nabla v \otimes \nabla v && \text{if} \, \, \alpha  = 4\,, \label{eq:G0_minor_b_equals_2}\\
        & \nabla u + \nabla u^T + |U_j^{-1} e_3|^2 \nabla v \otimes \nabla v = 0 && \text{if} \, \, 2 < \alpha  < 4\,. \label{eq:G0_minor_b_less_2}
    \end{align}
\end{cor}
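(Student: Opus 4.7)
The plan is to adapt the difference-quotient argument of Corollary 3.6 to the rescaling $\gamma = \alpha/2 > 1$, and to then read off the minor identifications of $G_0$ from the convergences of $u_h$ and $v_h$ already established in Proposition 3.7.

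The $L^2$-compactness of $G_h$ is immediate from the bound $\|\nabla_h \tilde y_h - \tilde R_h U_j\|_{L^2} \leq C h^\gamma$ of Proposition 3.7; a subsequence converges weakly in $L^2$ to some $G$. For the affinity in $x_3$ of the first two columns I would repeat the proof of Corollary 3.6 verbatim, with $h^{-1}$ replaced by $h^{-\gamma}$ and $\nu$ replaced by the third column of $AU_j$: setting $H_h^s(x',x_3) := s^{-1}(G_h(x',x_3+s) - G_h(x',x_3))$, for $\alpha\in\{1,2,3\}$ and $\beta\in\{1,2\}$ one has
\[
(\tilde R_h H_h^s)_{\alpha\beta}(x',x_3) = \frac{1}{s}\partial_\beta \int_0^s h^{1-\gamma}(\nabla_h \tilde y_h)_{\alpha 3}(x',x_3+\sigma)\,d\sigma.
\]
By \eqref{eq:convergence_rescaled_gradient_U} the right-hand side converges in $(W_0^{1,2}(\Omega))^\ast$ to $\partial_\beta(AU_j)_{\alpha 3}(x')$; the left-hand side converges weakly in $L^2$ (and thus in the same dual) to $H^s_{\alpha\beta}$, where the product convergence is justified because $(\tilde R_h-\Id)H_h^s\to 0$ in $L^1$ by Cauchy--Schwarz and \eqref{eq:convergence_approximated_rotation}. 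Matching limits forces $H^s_{\alpha\beta}$ to be independent of $x_3$ and $s$, which is the affine decomposition \eqref{eq:linearity_G} together with $(G_1)_{\alpha\beta} = \partial_\beta(AU_j)_{\alpha 3}$. Left-multiplying by $U_j$ and using $U_j A U_j = e_3\otimes\nabla v - \nabla v\otimes e_3$ from \eqref{eq:characterization_A} then gives $(U_j G_1)_{\alpha\beta} = -\partial_{\alpha\beta}v$ for $\alpha,\beta\in\{1,2\}$, proving \eqref{eq:G1_minor_gamma_greater_1}.

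For the minor relations on $G_0$ I would start from the identity
\[
\nabla w_h = (\tilde R_h-\Id)U_j^{1,2} + h^\gamma \tilde R_h\int_I G_h^{1,2}\,dx_3
\]
already derived in the proof of Proposition 3.7, multiply by $U_j$, take the upper-left $2\times 2$ block, and rescale by $c(h) := \min\{h^{-\gamma}, h^{2-2\gamma}\}$ so that the left-hand side becomes $\nabla u_h$. Taking symmetric parts and invoking the rotation identity $\sym(\tilde R_h-\Id) = -\frac{1}{2}(\tilde R_h-\Id)^T(\tilde R_h-\Id) = -\frac{h^{2\gamma-2}}{2}A_h^T A_h$ yields
\[
\sym(\nabla u_h) = -\frac{1}{2}\min\{h^{\gamma-2},1\}(U_j A_h^T A_h U_j)' + \min\{1, h^{2-\gamma}\}\left(\sym\left(U_j \tilde R_h \int_I G_h^{1,2}\,dx_3\right)\right)'.
\]
I would then pass to the limit term by term using: $A_h \to A$ strongly in every $L^q(S)$, $q<\infty$ (compact embedding $W^{1,2}\hookrightarrow L^q$ in dimension $2$); $\int_I G_h^{1,2}\,dx_3 \todeb G_0$ weakly in $L^2(S)$; $u_h\todeb u$ in $W^{1,2}(S;\r^2)$. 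The three regimes $\gamma>2$, $\gamma=2$, and $1<\gamma<2$ correspond to the three possible behaviors of the prefactors $\min\{h^{\gamma-2},1\}$ and $\min\{1,h^{2-\gamma}\}$ and yield \eqref{eq:G0_minor_b_greater_2}, \eqref{eq:G0_minor_b_equals_2}, and \eqref{eq:G0_minor_b_less_2}, respectively.

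The main algebraic obstacle will be the identity $(U_j A^2 U_j)' = -|U_j^{-1} e_3|^2 \nabla v \otimes \nabla v$, which produces the von K\'arm\'an-type quadratic correction in \eqref{eq:G0_minor_b_equals_2}--\eqref{eq:G0_minor_b_less_2}. Setting $B := U_j A U_j = e_3\otimes\nabla v - \nabla v\otimes e_3$, one has $U_j A^2 U_j = B U_j^{-2} B$, and a direct index computation exploiting $(\nabla v)_3 = 0$ (under the canonical embedding $\r^2\hookrightarrow\r^3$) collapses the cross terms to $(B U_j^{-2} B)_{\alpha\beta} = -(U_j^{-2})_{33}(\nabla v)_\alpha(\nabla v)_\beta$ for $\alpha,\beta\in\{1,2\}$; the claim then follows from $(U_j^{-2})_{33} = |U_j^{-1} e_3|^2$, which holds by symmetry of $U_j^{-1}$.
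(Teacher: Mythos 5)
Your proposal is correct and follows essentially the same route as the paper: the weak compactness and the difference-quotient proof that $G^{1,2}$ is affine are the paper's argument verbatim, and your identity for $\sym(\nabla u_h)$ is just the paper's decomposition of $(U_j G_h)'$ rearranged (same ingredients: the definition of $u_h$ via $w_h$, the rotation identity $\sym(\tilde R_h-\Id)=-\tfrac12(\tilde R_h-\Id)^T(\tilde R_h-\Id)$, the convergences of \Cref{prop:rescaling_convergence}, the regime analysis through $\min\{h^{\gamma-2},1\}$ and $\min\{1,h^{2-\gamma}\}$, and the computation $(U_jA^2U_j)'=-|U_j^{-1}e_3|^2\,\nabla v\otimes\nabla v$). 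The only cosmetic point is that in the difference-quotient step the constant $(U_j)_{\alpha 3}$ should be subtracted inside the integral before invoking the bound $\|\partial_\beta g\|_{(W_0^{1,2})^\ast}\leq\|g\|_{L^2}$ (as the paper does), since $h^{1-\gamma}(\nabla_h\tilde y_h)_{\alpha 3}$ itself does not converge in $L^2$; as $\partial_\beta$ annihilates constants, your conclusion is unaffected.
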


\begin{proof}
    We will write $U$ in place of $U_j$ to simplify the notation. Convergence \eqref{eq:convergence_G} follows immediately from \eqref{eq:convergence_rescaled_gradient}. To show that $G^{1,2}$ is affine, we study the difference quotient
    \[
        H^s_h(x', x_3) = \dfrac{1}{s}(G_h(x',x_3+s) - G_h(x', x_3)) \,,
    \]
    for $s$ such that $x_3 + s \in I$. Repeating the same computation as in \Cref{cor:convergence_G_alpha_2}, we deduce that for $\alpha = 1, 2, 3$ and $\beta = 1, 2$ we have
    \begin{equation}\label{eq:difference_quotient_Rh}
        {(\tilde R_h(x') H_h^s(x', x_3))}_{\alpha \beta} = \dfrac{1}{s} \dpartial{}{x_\beta} \int_0^s  h^{1-\gamma} \left(\dfrac{1}{h} \dpartial{\tilde y_{h, \alpha}}{x_3}(x', x_3 + \sigma) - E_{\alpha 3}\right)\, d\sigma \,.
    \end{equation}
    By \eqref{eq:convergence_rescaled_gradient_U}, the integral on the right hand-side converges strongly as $h \to 0$ in $L^2(\Omega)$ to
    \[
        \int_0^s {(AU)}_{\alpha 3} \, d\sigma = s{(AU)}_{\alpha 3} \,.
    \]
    Hence, the right-hand side of \eqref{eq:difference_quotient_Rh} converges strongly in $(W^{1,2}_0(\Omega))^\ast$ to $\dpartial{}{x_\beta} [{(AU)}_{\alpha 3}]$. By \eqref{eq:convergence_G}, the left-hand side of \eqref{eq:difference_quotient_Rh} converges weakly in $L^2(\Omega)$ to
    \begin{equation}\label{eq:difference_quotient_limit}
        {H^s(x', x_3)}_{\alpha \beta} := \dfrac{1}{s}({G(x', x_3+s)}_{\alpha \beta} - {G(x', x_3)}_{\alpha \beta}) \,. \label{eq:definition_H_s}
    \end{equation}
    Since $L^2(\Omega)$ is continuously embedded in $(W^{1,2}_0(\Omega))^\ast$, we obtain 
    \begin{equation} \label{eq:difference_quotient_structure}
        {H^s(x', x_3)}_{\alpha \beta} = \dpartial{}{x_\beta} [{(AU)}_{\alpha 3}] \quad \forall \, \alpha =1, 2, 3 \quad \forall \, \beta = 1, 2 \,.
    \end{equation}
    In particular, the first two columns of $H^s$ are independent of $x_3$ (recall that $A$ depends only on $x'$) and so the first two columns of $G$ are affine in $x_3$. 

    For the final part of the statement note that $G_h$ can be rewritten as follows
    \[
        G_h = \dfrac{\nabla_h \tilde y_h - U_j}{h^{\gamma}} - \dfrac{R_h U_j - U_j}{h^{\gamma}} + {(R_h - \Id)}^T\dfrac{\nabla_h \tilde y_h - R_h U_j}{h^{\gamma}} \,.
    \]
    Hence,  
    \begin{equation}\label{eq:decomposition_G}
        \begin{aligned}
            (U_j G_h)' & = {(U_j)}_{1,2} \dfrac{\nabla_h \tilde y_h U_j^{-1} - \Id}{h^{\gamma}} {(U_j)}^{1,2} - h^{\gamma - 2}{(U_j)}_{1,2} \dfrac{R_h - \Id}{h^{2\gamma - 2}} {(U_j)}^{1,2} \\
            & \hphantom{=} \, \, + \left[U_j{(R_h - \Id)}^T\dfrac{\nabla_h \tilde y_h - R_h U_j}{h^{\gamma}}\right]' \,.
        \end{aligned}
    \end{equation}
    If $\gamma > 2$, then $\min \{h^{-\gamma}, h^{2-2\gamma}\} = h^{-\gamma}$. Integrating with respect to $x_3$, taking the symmetric part and passing to the limit as $h \to 0$ we get
    \[
        \sym(U_j G_0)' = \sym(\nabla u) \,,
    \]
    where we have used \eqref{eq:convergence_rescaled_gradient}, \eqref{eq:convergence_Ah} and \eqref{eq:convergence_symmetric_part}. If $\gamma = 2$, passing to the limit in \eqref{eq:decomposition_G} we also get the term $-\frac{(UA^2U)'}{2}$. By the characterization \eqref{eq:characterization_A} of $UAU$ we get with some computation
    \begin{align}
        (UA^2U)' & = (UAU (U^{-1})^2 UAU)' = (UAU)_{1, 2} (U^{-1})^2 (UAU)^{1, 2} \\
        & = -\nabla v^T (U^{-1} e_3)^T U^{-1}e_3 \nabla v = - |U^{-1}e_3|^2 \nabla v \otimes \nabla v \,,
    \end{align}
    proving \eqref{eq:G0_minor_b_equals_2}. Lastly, if $1 < \gamma < 2$, we multiply both sides of \eqref{eq:decomposition_G} by $h^{2-\gamma}$ before passing to the limit. The left-hand side converges to $0$, while the right-hand side converges again to
    \[
        \sym(\nabla u) + \dfrac{1}{2}|U_j^{-1} e_3|^2 \nabla v \otimes \nabla v \,.
    \]
    Finally, note that $G_1 = H^1$ by \eqref{eq:difference_quotient_limit}. Thus, by \eqref{eq:difference_quotient_structure} for $\alpha, \beta = 1, 2$
    \begin{equation}
        {(U G_1)}_{\alpha \beta} = \sum_{k=1}^3 {(U)}_{\alpha k} \sum_{l=1}^3 \dpartial{A_{k l}}{x_\beta} {(U)}_{l3} = \dpartial{}{x_\beta} {(UAU)}_{\alpha 3} = - \dfrac{\partial^2 v}{\partial x_\beta \partial x_\alpha} \,.
    \end{equation}
\end{proof}

\begin{proof}[Proof of \Cref{teo:gamma_convergence}--\ref{item:compactness}]
    It immediately follows from \Cref{prop:rescaling_convergence} and \Cref{cor:convergence_G}.
\end{proof}

\section{Proof of the \texorpdfstring{$\bm{\Gamma}$}{\unichar{"0393}}-Convergence results}\label{section:gamma-convergence}

We are now ready to complete the proofs of \Cref{teo:gamma_convergence_alpha_2} and \Cref{teo:gamma_convergence}. By the results of the previous section, we just need to prove the $\liminf$ inequality and the existence of recovery sequences.

\subsection{The \texorpdfstring{$\bm{\liminf}$}{liminf} inequality}

\begin{proof}[Proof of \Cref{teo:gamma_convergence_alpha_2}--\ref{item:liminf_inequality_alpha_2} and \Cref{teo:gamma_convergence}--\ref{item:liminf_inequality_alpha_2}]
    Define the\textsf{} matrix $G_h$ as in \Cref{cor:convergence_G} or \Cref{cor:convergence_G_alpha_2}, depending on the value of $\alpha$. Let $\tilde \Omega_h := \{x  \in \Omega \, : \, |G_h(x)| < h^{-1}\}$ and let $\chi_h$ be its characteristic function. Clearly, $\chi_h$ is bounded and $\chi_h \to 1$ in $L^1(\Omega)$. Thus, we have $\chi_h G_h \todeb G$ in $L^2(\Omega)$. Expanding $W$, we get
    \begin{align}
        W(\nabla_h y_h) \geq W(U_j + h^{\gamma} G_h) \geq \dfrac{1}{2} Q_j(h^{\gamma} G_h) - \omega(h^{\gamma} |G_h|) h^{\alpha} |G_h|^2 \,,    
    \end{align}
    where $\omega$ is the modulus of continuity of $D^2W$ at $U_j$. In particular
    \begin{align}
        E_h^\alpha(y_h) & \geq \dfrac{1}{2} \int_\Omega \left[Q_j(\chi_h G_h) - \omega(h^\gamma |\chi_h G_h|)|\chi_h G_h|^2 \right]\, dx\\
        & \geq \dfrac{1}{2} \int_\Omega Q_j(\chi_h G_h) \, dx - C \omega(h^{\gamma-1})\,.
    \end{align}
    Recall that $Q_j$ is weakly lower semicontinuous in $L^2(\Omega)$ by convexity. Thus, passing to the limit and applying \Cref{lemma:simmetry_second_derivative} we obtain
    \begin{align}
        \liminf_{h \to 0} E_h^\alpha(y_h) & \geq \dfrac{1}{2} \int_\Omega Q_j(G) \, dx = \int_\Omega Q_j(U_j^{-1}\sym(U_j G)) \, dx\\
        & \geq \int_\Omega \bar Q_j(\sym(U_j G)') \, dx \,.
    \end{align}
    Recall that
    \[
        \sym(U_j G(x', x_3))' = \sym(U_j G_0(x'))' + x_3 \sym(U_j G_1(x'))' \,.
    \]
    By \Cref{cor:convergence_G_alpha_2} and \Cref{cor:convergence_G} we conclude.
\end{proof}

\subsection{Recovery sequences}

We are left with the construction of the recovery sequences. Here, for a clearer exposition, we follow the reverse order and start with the recovery sequence for the case $\alpha > 4$. We will often use the big(small)-O notation, that has to be understood uniformly over $\Omega$.

\begin{proof}[Proof of \Cref{teo:gamma_convergence}--\ref{item:recovery_sequence_alpha_greater_equal_4}]
    Let $j \in \{1, \dots, l\}$. Suppose $\alpha > 4$. By a standard density argument it is sufficient to exhibit a recovery sequence for $u \in C^{\infty}(\bar S; \r^2)$ and $v \in C^\infty(\bar S)$. We define
    \[
        B_h(x', x_3) := \begin{pmatrix}
            h^\gamma u\\
            h^{\gamma - 1} v
        \end{pmatrix} - h^{\gamma} x_3 \begin{pmatrix}
            {\nabla v}^T\\
            0
        \end{pmatrix} + \dfrac{h^{\gamma + 1}}{2} x_3^2 \xi(x') + h^{\gamma + 1} x_3 \zeta(x') \,,
    \]
    and
    \[
        y_h(x', x_3) := U_j\begin{pmatrix}
            x'\\
            h x_3
        \end{pmatrix} + U_j^{-1}B_h(x', x_3) \,,
    \]
    where $\xi, \zeta$ are smooth functions independent of $x_3$ to be chosen. We immediately deduce that
    \begin{enumerate}[(i)]
        \item $h^{1-\gamma} v_h = v + \dfrac{1}{24}h^2\xi_3 \to v$ in $W^{1, 2}(\Omega)$,
        \item $ h^{-\gamma} u_h = u + \dfrac{1}{24}h \xi_{12} \to u$ in $W^{1, 2}(\Omega; \r^2)$.
    \end{enumerate}
    Computing the rescaled gradient, we get
    \[
        \nabla_h y_h = U_j + U_j^{-1} \nabla_h B_h(x', x_3) \,,
    \]
    where
    \[
        \nabla_h B_h = h^{\gamma}\left[ \begin{pmatrix}
            \nabla u - x_3\nabla^2 v & -h^{-1}{\nabla v}^T\\
            h^{-1} {\nabla v} & 0
        \end{pmatrix} + (x_3 \xi + \zeta) \otimes e_3 \right] + O(h^{\gamma+1}) \,.
    \]
    By construction, we have $\nabla_h y_h = M_hU_j$, where
    \[
        M_h =  \Id + U_j^{-1}\nabla_hB_hU_j^{-1} \,.
    \]
    We first compute $M^T_hM_h$ and obtain
    \begin{equation}
        M^T_hM_h = \Id + 2U_j^{-1} \sym(\nabla_h B_h) U_j^{-1}+ O(h^{2\gamma -2}) \,. \label{eq:product_Mh}
    \end{equation}
    Note that $\sym(\nabla_h B_h) = O(h^\gamma)$. Then, we develop $\sqrt{M^T_hM_h}$ near the identity to obtain
    \[
        \sqrt{M_h^TM_h} = \Id + U_j^{-1}\sym(\nabla_h B_h) U_j^{-1} + O(h^{2\gamma - 2}) \,.
    \]
    Let $\tilde W(P) = W(PU_j)$ for every $P \in \r^{3 \times 3}$. Clearly $\tilde W$ is frame indifferent and $D^2 \tilde W(\Id)[A]^2 = Q_j(AU_j)$. In particular $\tilde W(P) = \tilde W(\sqrt{P^TP})$. Note that $\gamma - 2 > 0$ whenever $\alpha > 4$. Developing $\tilde W$ near the identity, we obtain
    \begin{align}
        W(\nabla_h y_h) & = \tilde W(M_h) = \tilde W\left(\sqrt{M_h^TM_h}\right) = D^2\tilde W(\Id)[M_h-\Id]^2 + o(|F-\Id|^2)\\
        & = h^{2\gamma}\dfrac{1}{2}Q_j{\left[h^{-\gamma}U_j^{-1}\sym(\nabla_h B_h) + O(h^{\gamma - 2})\right]}^2+ o(h^{2\gamma}) \,.
    \end{align}
    Observing that $\nabla_h^2 y_h = U_j^{-1}\nabla_h^2 B_h = O(h^{\gamma-1})$, we get by assumption \ref{item:convergence_penalties}
    \[
        \lim_{h \to 0^+} E_h^\alpha(y_h) = \dfrac{1}{2}\int_\Omega Q_j(U_j^{-1}\sym(B)) \, dx \,,
    \]
    where
    \[
        B = U_j^{-1}\left(\begin{pmatrix}
            \nabla u - x_3\nabla^2 v & 0\\
            0 & 0
        \end{pmatrix} + x_3\xi \otimes e_3 + \zeta \otimes e_3)\right) \,.
    \]
    Choose $\xi = -2L_j(\nabla^2 v)$ and $\zeta = 2L_j(\sym(\nabla u))$ where $L_j$ is the linear operator defined in \eqref{eq:definition_L}. Thus,
    \begin{align}
        \dfrac{1}{2}\int_\Omega Q_j(U_j^{-1}\sym(B)) \, dx & = \dfrac{1}{24}\int_S Q_j(U_j^{-1}(\nabla^2v + \sym(\xi \otimes e_3))) \, dx' \\
        & \hphantom{=} \, \, + \dfrac{1}{2}\int_S Q_j(U_j^{-1}(\sym(\nabla u) + \sym(\zeta \otimes e_3))) \, dx' \\
        & = \dfrac{1}{24}\int_\Omega \bar Q_j(\nabla^2 v) \, dx' + \dfrac{1}{8}\int_\Omega \bar Q_j(\nabla u^T + \nabla u) \, dx'
    \end{align}
    as desired. The factor $\frac{1}{24}$ is due to the integration of $x_3^2$ over $I$, while the mixed term gives no contribution since $x_3$ has zero mean over $I$.
    
    Suppose now $\alpha = 4$. We use the same recovery sequence and the same notation as in the case $\alpha > 4$. The main difference is that terms of order $h^{2\gamma - 2}$ in \eqref{eq:product_Mh} cannot be neglected. By some simple computation we get
    \[
        M_h^TM_h = \Id + 2h^2U_j^{-1}(P_h + V)U_j^{-1} + O(h^3)\,,
    \]
    where
    \begin{align}
    P_h & = \begin{pmatrix}
    \nabla u - x_3 \nabla^2 v & - h^{-1} \nabla v\\
    h^{-1} \nabla v & 0
    \end{pmatrix} + (x_3 \xi + \zeta) \otimes e_3 + \dfrac{1}{2}|U_j^{-1} e_3|^2 \nabla v \otimes \nabla v \,,\\
    V & = - \left(U_j^{-1}e_3 \cdot U_j^{-1} \nabla v^T \right) \sym(\nabla v \otimes e_3) + \dfrac{1}{2}\left| U_j^{-1} \nabla v^T\right|^2 e_3 \otimes e_3 \,.   
    \end{align}
    Hence, developing the square root we obtain
    \begin{align}
    \sqrt{M_h^TM_h} = \Id + h^2U_j^{-1}\sym(P_h+ V)U_j^{-1} + O(h^3) \,.
    \end{align}
    Note that $\sym(P_h)$ is independent of $h$. To conclude it is then sufficient to choose $\xi = -2L_j(\nabla^2 v)$ and 
    \begin{align}
    \zeta & = -\frac{1}{2}\left|U_j^{-1} \nabla v^T\right|^2 e_3 + \left( U_j^{-1}e_3 \cdot  U_j^{-1}\nabla v^T \right) \nabla v\\
    & \hphantom{=} \, \, + 2L_j\left(\sym(\nabla u) + \frac{1}{2}|U_j^{-1}e_3|^2\nabla v \otimes \nabla v\right) \,.   
    \end{align}
\end{proof}

In the case $2 < \alpha < 4$ the construction of the recovery sequence is different and involves the perturbation of an isometric embedding. In doing so, we will need some higher regularity of the boundary of $S$ to deal with the penalty term.

\begin{proof}[Proof of \Cref{teo:gamma_convergence}--\ref{item:recovery_sequence_2_alpha_4}]
    Firstly, suppose that $v \in C^{\infty}(\bar S)$ satisfies \eqref{eq:isometry_constraint}. By \Cref{remark:determinant_zero} we have $\det(\nabla^2 v) = 0$ in $S$. For $h \ll 1$ we can apply \Cref{lemma:existence_isometry} and construct a sequence $u_h \in W^{2, \infty}(S; \r^2)$ with $\|u_h\|_{W^{2, \infty}}$ uniformly bounded such that the map
    \[
        \tilde y_h (x') = U_j \begin{pmatrix}
            x' \\ 0
        \end{pmatrix} + h^{\gamma - 1}vU^{-1}_je_3 + h^{2\gamma - 2}U_j\begin{pmatrix}
            u_h \\ 0
        \end{pmatrix}
    \]
    satisfies $\nabla \tilde y^T_h \nabla \tilde y_h = (U^2_j)'$. Define $\nu_h$ as in \Cref{lemma:definition_nu} with $U = U_j$ for the map $\tilde y_h$. We consider the recovery sequence given by
    \[
        y_h(x', x_3) = \tilde y_h(x') + hx_3 \nu_h(x') +\dfrac{1}{2} h^{\gamma + 1} x_3^2 U_j^{-1} \xi(x') \,,
    \]
    where $\xi$ is a smooth function independent of $x_3$ to be determined. Observe that
    \[
    	v_h = v +h^{\gamma - 1} U_j^2 \begin{pmatrix}
    	u_h \\ 0
    	\end{pmatrix} \cdot e_3 + \dfrac{1}{24}h^2 \xi_3 \to v \quad \text{in} \, \, W^{1, 2}(S) \,.    
    \]
    We have
    \begin{equation}\label{eq:rescaled_gradient_recovery_sequence_24}
        \nabla_h y_h =(\nabla \tilde y_h, \nu_h) + h x_3(\nabla \nu_h, 0) + h^{\gamma} x_3 U_j^{-1} \xi \otimes e_3 + O(h^{\gamma + 1}) \,.
    \end{equation}
    Define $R_h = (\nabla \tilde y_h, \nu_h) U_j^{-1}$. By the definition of $\nu_h$, we have $R_h \in \SO(3)$. We rewrite $\nabla_h y_h$ as
    \[
        \nabla_h y_h = R_h(U_j + B_h) \,,
    \]
    where
    \[
        B_h := hx_3 U_j^{-1}\begin{pmatrix}
            \nabla \tilde y_h^T  \nabla \nu_h & 0 \\
            0 & 0
        \end{pmatrix} + h^{\gamma} x_3 R_h^T U_j^{-1} \xi \otimes e_3 + O(h^{\gamma + 1}) \,.
    \]
    Note that we used the fact that $\nu_h^T \nabla \nu_h = 0$, which follows from the constant norm of $\nu_h$. By \Cref{lemma:convergence_foundamental_form} with $\varepsilon = h^{\gamma - 1}$ we have
    \[
        \nabla \tilde y_h^T  \nabla \nu_h = -h^{\gamma - 1} \nabla^2 v + O(h^{2\gamma-2}) \,.
    \]
    Moreover, it is easy to check that $R_h = \Id + O(h^{\gamma-1})$. Thus, 
    \[
        B_h = h^\gamma U_j^{-1}\begin{pmatrix}
            -x_3 \nabla^2 v & 0 \\
            0 & 0
        \end{pmatrix} + h^{\gamma} x_3 U_j^{-1} \xi \otimes e_3 +O(h^{2\gamma - 1}) \,,
    \]
    and $\nabla_h y_h \to U_j$ in $L^2(\Omega; \r^{3 \times 3})$. In particular, $h^{-\gamma}B_h \to B$ in $L^2(\Omega; \r^{3 \times 3})$, where
    \[
        B = U_j^{-1}\begin{pmatrix}
            - x_3 \nabla^2 v & 0 \\
            0 & 0
        \end{pmatrix} + x_3 U_j^{-1} \xi \otimes e_3 \,.
    \]
    Developing $W$ we get
    \[
        W(\nabla_h y_h) = \dfrac{1}{2}Q_j(B_h) + o(h^{2\gamma}) \,.
    \]
    We are left to estimate the penalty term. 
    We have $\nabla_h^2 \tilde y_h = O(h^{\gamma - 1})$ and, by definition of $\nu_h$, $\nabla \nu_h = O(h^{\gamma - 1})$.
    By \eqref{eq:rescaled_gradient_recovery_sequence_24}, it follows that $\nabla^2_h y_h = O(h^{\gamma - 1})$.
    Hence, by \ref{item:convergence_penalties}, we deduce that
    \[
        \lim_{h \to 0^+} E_h^\alpha(y_h) = \dfrac{1}{2}\int_\Omega Q_j(B) \, dx \,.
    \]
    To conclude, it is sufficient to choose $\xi = -2L_j(\nabla^2 v)$, where $L_j$ is the linear operator defined in \eqref{eq:definition_L}. 
    
    For the general case of $v \in W^{2,2}(S)$ satisfying \eqref{eq:isometry_constraint} we apply \Cref{lemma:density_isometries} and a standard diagonal argument to conclude. 
\end{proof}

\begin{proof}[Proof of \Cref{teo:gamma_convergence_alpha_2}--\ref{item:recovery sequence_alpha_2}]
    Let $j \in \{1, \dots, l\}$. Firstly, suppose that $y \in C^\infty(\bar{S}; \r^3)$ satisfies $(\nabla y)^T \nabla y = (U_j^2)'$. Define
    \[
        y_h(x', x_3) = y(x') + hx_3 \nu(x') + \dfrac{1}{2} h^2 x_3^2 \xi(x') \,,
    \]
    where $\xi$ is a smooth function independent of $x_3$ to be chosen and $\nu$ is defined as in \Cref{lemma:definition_nu}. Let $R = (\nabla y, \nu) U_j^{-1}$. By construction $R \in \SO(3)$ a.e in $S$. Computing the rescaled gradient we get
    \begin{align}
        \nabla_h y_h & = (\nabla y, \nu) + hx_3( \nabla \nu, \xi) + \dfrac{1}{2}h^2 x_3^2(\nabla \xi, 0)\\
        & = R\left[U_j + h x_3 U^{-1}_j (\nabla y^T  \nabla \nu) + hx_3 R^T \xi \otimes e_3 + O(h^2) \right] \,.
    \end{align}
    It is clear that $\nabla_h y_h \to (\nabla y, \nu)$ in $L^2(S; \r^{3 \times 3})$. For $h \ll 1$ we expand $W$ and get
    \[
        W(\nabla_h y_h) = \dfrac{1}{2}h^2 x_3^2 Q_j\left(U_j^{-1}(\nabla y^T  \nabla \nu) + R^T \xi \otimes e_3\right) + o(h^2) \,.
    \]
    By the symmetry of $Q_j$ (see \Cref{lemma:simmetry_second_derivative}) we immediately deduce that 
    \[
        W(\nabla_h y_h) = \dfrac{1}{2}h^2 x_3^2 Q_j\left(U_j^{-1}(\nabla y^T \nabla \nu + \sym(U_jR^T \xi \otimes e_3))\right) + o(h^2) \,.
    \]
    Indeed, since $\partial_j(\nabla y^T  \nu) = 0$, the matrix $\nabla y^T \nabla \nu$ is symmetric by the following chain of equalities
    \begin{equation}\label{eq:chain_equalities_y_nu}
        \partial_i y \cdot \partial_j \nu = -\partial_{ij}y \cdot \nu = - \partial_{ji} y \cdot \nu = -\partial_j y \cdot \partial_i \nu \,.
    \end{equation}
    Set $\xi := RU_j^{-1}L_j(\nabla y^T  \nabla \nu)$, where $L_j$ is the linear operator defined in \eqref{eq:definition_L}. By construction, we have
    \[
        \dfrac{1}{h^2} \int_\Omega W(\nabla_h y_h) \, dx = \dfrac{1}{24} \int_S \bar Q_j(\nabla y^T  \nabla \nu) \, dx + o(1) \,.
    \]
    Clearly, the rescaled Hessian $\nabla_h^2 y_h$ is bounded in $L^\infty (\Omega; \r^{3 \times 3 \times 3})$. Hence, by \ref{item:convergence_penalties} we have
    \[
        \dfrac{\eta^p(h)}{h^2} \int_\Omega |\nabla_h^2 y_h|^p \, dx \to 0 \,,
    \]
    concluding the proof of the existence of a recovery sequence for a smooth $y$. 
    
    To conclude the proof, we first observe that $E_j^{\KL}$ is continuous with respect to the $W^{2, 2}$ topology.
    Let
    \[
        W^{2,2}_{\text{iso}, j}(S; \r^3) := \{y \in W^{2,2}(S; \r^3) \, : \ \nabla y^T \nabla y = (U^2_j)'\} \,.
    \]
    For every $y \in W^{2, 2}_{\text{iso}, j}(S; \r^3)$, arguing as in \eqref{eq:chain_equalities_y_nu}, we have $(\nabla y)^T \nabla \nu = - \nabla^2 y \nu$, where $\nu$ is defined as in \Cref{lemma:definition_nu}. Given a sequence $(y_n) \subset W^{2, 2}_{\text{iso}, j}(S; \r^3)$ such that $y_n \to y$ in $W^{2, 2}(S; \r^3)$ we have, up to a subsequence, $\nabla^2 y_n \to \nabla^2 y$ almost everywhere in $S$. Let $\nu_n$ and $\nu$ be defined as in \Cref{lemma:definition_nu} for $y_n$ and $y$, respectively. Then, $\nu_n \to \nu$ in $L^1(S; \r^3)$, thus, up to subsequences, $\nu_n \to \nu$ almost everywhere in $S$. Hence, $\nabla^2y_n \nu_n \to \nabla^2 y \nu$ and by dominated convergence $E_j^{\KL}(y_n) \to E_j^{\KL}(y)$. Moreover, the set of functions $y \in C^{\infty}(\bar S; \r^3)$ satisfying $(\nabla y)^T \nabla y = (U_j^2)'$ is dense in $W^{2, 2}_{\text{iso}, j}(S; \r^3)$. Indeed, let $\varepsilon > 0$ and pick an isometric embedding $y \in W^{2,2}_{\text{iso}, j}(S; \r^3)$. Define $\tilde y(x) = y(A^{-1}x)$, where $A := \sqrt{(U^2_j)'}$. Clearly, $\tilde y \in W^{2,2}(AS; \r^3)$ and
    \[
        \nabla \tilde y^T \nabla \tilde y = A^{-1} \nabla y^T \nabla y A^{-1} = \Id \,.
    \]
    The set $AS$ satisfies condition \eqref{eq:boundary_condition}. Hence, by \cite[Theorem 1]{HORNUNG2011} there exists a smooth isometric embedding $\tilde \phi$ for the flat metric $\Id$ such that $\|\tilde y - \tilde \phi\|_{W^{2,2}} \leq \varepsilon\sqrt{\det(A)}$. Defining $\phi(x) := \tilde \phi(Ax)$ we get $\phi \in C^{\infty}(\bar S; \r^3)$, $\nabla \phi^T \nabla \phi = A^2 = (U_j^2)'$, and $\|y - \phi\|_{W^{2,2}} \leq \varepsilon$. A standard diagonal argument allows one to conclude.
\end{proof}

\begin{remark}
    Observe that the argument used in \cite{FRIESECKE2002} to prove the existence of a recovery sequence cannot be applied here. Indeed, the truncation argument, which is the basis of the construction of \cite{FRIESECKE2002}, would lead to deformations with a low regularity, for which the penalty term cannot be written.
\end{remark}

\section{Convergence of minimizers with dead loads}\label{section:forces}

In this section we prove \Cref{teo:convergence_minimizers_alpha_2} and \Cref{teo:convergence_minimizers}. We start by showing that a sequence of deformations with bounded total energy has also bounded elastic energy.
For the convenience of the reader, we recall that
\[	
\mathcal{M}_{h} = \argmax_{K} F_{h} \,,
\]
where
\[
	F_{h}(A) = \int_S f_{h} \cdot A \begin{pmatrix}
	x' \\ 0
	\end{pmatrix} \, dx \,.
\]
Similarly, $ \mathcal{M} $ is defined as the set of maximizers over $ K $ of $ F $, where 
\[
	F(A) = \int_S f \cdot A \begin{pmatrix}
	x' \\ 0
	\end{pmatrix} \, dx \,.
\]

\begin{lemma}\label{lemma:compactness_forces}
	Let $\alpha \geq 2$ and $q > 1$. Suppose that $(y_h) \subset W^{1,2}(\Omega; \r^3)$ is a sequence of deformations that are quasi-minimizers for $J_h^\alpha$, that is,
	\[
	\lim_{h \to 0} \, (J_h^\alpha(y_h) - \inf J_h^\alpha) =0 \,.
	\]
	Then, $E_h^\alpha(y_h) \leq C$ for every $h > 0$.
\end{lemma}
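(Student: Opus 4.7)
The plan is to compare the energy of the quasi-minimizer against an explicit affine competitor that achieves the maximum of the force term with vanishing elastic energy. Fix any $A_h^\ast \in \mathcal{M}_h$ and set $y_h^\ast(x) = A_h^\ast \begin{pmatrix} x' \\ h x_3 \end{pmatrix}$. Since $\nabla_h y_h^\ast = A_h^\ast \in K$ and $\nabla_h^2 y_h^\ast = 0$ we have $E_h^\alpha(y_h^\ast) = 0$, and because $f_h$ is $x_3$-independent the component in the $x_3$-direction drops, so $\int_\Omega f_h \cdot y_h^\ast\, dx = F_h(A_h^\ast) = \max_K F_h$. Consequently $J_h^\alpha(y_h^\ast) = -h^{-\alpha}\max_K F_h$, and the quasi-minimizing property yields
\begin{equation}
E_h^\alpha(y_h) \leq \frac{1}{h^\alpha}\int_\Omega f_h \cdot y_h\, dx - \frac{1}{h^\alpha}\max_K F_h + o(1).
\end{equation}

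Next I would write $y_h = A_h^\ast \begin{pmatrix} x' \\ h x_3\end{pmatrix} + c_h + r_h$, where $c_h \in \r^3$ is chosen so that $\int_\Omega r_h\, dx = 0$. The mean-free condition $\int_S f_h\, dx'=0$ kills both the constant $c_h$ and (by $x_3$-symmetry) the $hx_3$-term, so $\int_\Omega f_h \cdot y_h\, dx = F_h(A_h^\ast) + \int_\Omega f_h \cdot r_h\, dx$. The fixed (i.e. $h$-independent) geometry of $\Omega$ makes the constant in the Poincaré-Wirtinger inequality uniform in $h$, hence
\begin{equation}
\|r_h\|_{L^q(\Omega)} \leq C\|\nabla r_h\|_{L^q(\Omega)} \leq C\|\nabla_h y_h - A_h^\ast\|_{L^q(\Omega)},
\end{equation}
using that $\nabla y_h = (\nabla_h y_h) I_h$ and $I_h$ is bounded.

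Now I would exploit \ref{item:lower_bound_W}. Since $q \in (1,2]$, one has $t^q \leq 1 + f_q(t)$ for all $t\geq 0$. Combined with $|\nabla_h y_h| \leq \dist(\nabla_h y_h, K) + \max_K|A|$ and the boundedness of $K$, integration gives $\|\nabla_h y_h\|_{L^q(\Omega)}^q \leq C(1 + h^\alpha E_h^\alpha(y_h))$, whence $\|r_h\|_{L^q} \leq C(1 + h^\alpha E_h^\alpha(y_h))^{1/q}$. Hölder with the scaling \eqref{eq:convergence_forces} then yields
\begin{equation}
\left|\frac{1}{h^\alpha}\int_\Omega f_h \cdot r_h\, dx\right| \leq \frac{C h^{\gamma+1}}{h^\alpha}\bigl(1 + h^\alpha E_h^\alpha(y_h)\bigr)^{1/q} = C h^{1-\gamma}\bigl(1 + h^\alpha E_h^\alpha(y_h)\bigr)^{1/q}.
\end{equation}
Expanding $(1 + h^\alpha E_h^\alpha)^{1/q} \leq 1 + (h^\alpha E_h^\alpha)^{1/q}$, the relevant exponent is $1 - \gamma + \alpha/q = 1 + \alpha(2-q)/(2q) \geq 1$ for $\alpha \geq 2$ and $q\leq 2$, so the prefactor $h^{1-\gamma+\alpha/q}$ remains bounded. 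Young's inequality with exponents $q$ and $q'$ then absorbs $(E_h^\alpha(y_h))^{1/q}$ into $\tfrac12 E_h^\alpha(y_h)$ plus a constant, and rearranging concludes $E_h^\alpha(y_h)\leq C$.

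The main subtlety is the combined use of the mean-free hypothesis and the sharp scaling of $f_h$: picking $A_h^\ast$ as a maximizer of $F_h$ (rather than, say, a near-identity rotation adapted to $\nabla_h y_h$) is what makes the leading force term cancel, leaving only a remainder controllable by $\|\nabla_h y_h - A_h^\ast\|_{L^q}$. The other delicate point is that \ref{item:lower_bound_W} gives only control of $f_q(\dist)$, so the passage from $\dist$ to $|\nabla_h y_h|^q$ must be done carefully using $q \leq 2$; if the hypothesis $q>1$ is dropped, Poincaré-Wirtinger and the conjugate Hölder exponent break down, which clarifies the role of that assumption.
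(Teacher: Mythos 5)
There is a genuine gap, and it is exactly at the point you flag as harmless. After Hölder you arrive at
\begin{equation}
E_h^\alpha(y_h) \leq C h^{1-\gamma}\bigl(1 + h^\alpha E_h^\alpha(y_h)\bigr)^{1/q} + o(1) \leq C h^{1-\gamma} + C h^{1-\gamma+\alpha/q}\bigl(E_h^\alpha(y_h)\bigr)^{1/q} + o(1)\,,
\end{equation}
and you only check the exponent of the \emph{second} term. The first term, $Ch^{1-\gamma}$, comes from the ``$1$'' in the expansion, i.e.\ from the fact that your bound $\|r_h\|_{L^q} \leq C(1+h^\alpha E_h^\alpha(y_h))^{1/q}$ is merely $O(1)$: the remainder $r_h = y_h - A_h^\ast(x',hx_3)^T - c_h$ is \emph{not} small, because smallness of $\int_\Omega \dist^q(\nabla_h y_h, K)\,dx$ does not force $\nabla_h y_h$ to stay near the particular matrix $A_h^\ast\in\mathcal{M}_h$ — the gradient can sit near a different well, or near $RU_j$ with $R$ far from the rotation part of $A_h^\ast$, at negligible energetic cost. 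For $\alpha=2$ ($\gamma=1$) the prefactor $h^{1-\gamma}$ is harmless and your argument closes (and is in fact simpler than the paper's for that case), but for $\alpha>2$ it diverges, and Young's inequality only yields $E_h^\alpha(y_h)\leq Ch^{1-\gamma}$, i.e.\ the weaker statement $h^\alpha E_h^\alpha(y_h)\to 0$ (indeed $\lesssim h^{1+\gamma}$), not a uniform bound.

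What is missing is precisely the second stage of the paper's proof, and it cannot be bypassed by cruder norms: one first proves $h^\alpha E_h^\alpha(y_h)\to 0$ (your computation essentially does this), then invokes \Cref{prop:compactness} to pin the gradient to a \emph{single} well $K_{j_0}$ in $L^2$, and the rigidity estimate of \Cref{prop:approximated_rotation} (with the constant rotation $Q_h$ of \eqref{eq:approximated_constant_rotation}) plus Poincaré--Wirtinger to get $\|y_h - Q_hU_{j_0}(x',hx_3)^T - c_h\|_{L^2} \leq Ch^{\gamma-1}\bigl(E_h^\alpha(y_h)+(E_h^\alpha(y_h))^\theta\bigr)^{1/2}$. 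With this sharper $h^{\gamma-1}$ smallness the force term scales as $h^{\gamma+1}\cdot h^{\gamma-1}\cdot h^{-\alpha}=O(1)$ times $(E+E^\theta)^{1/2}$, and the mismatch between $Q_hU_{j_0}$ and the maximizer of $F_h$ is handled not by a size estimate but by its sign, $F_h(Q_hU_{j_0})-\max_K F_h\leq 0$ (this is why the competitor built from $\mathcal{M}_h$ is still the right one). Then Young's inequality with exponents $2$ and $2/\theta$ closes the argument for all $\alpha\geq 2$. So your choice of competitor and the cancellation of the leading force term are fine; the gap is that without the single-well compactness and rigidity input, the remainder $r_h$ cannot be made $o(h^{\gamma-1})$, and the uniform bound fails for $\alpha>2$.
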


\begin{proof}
	Firstly, we will prove that $h^\alpha E_h^\alpha(y_h) \to 0$. Fix $j \in \{1, \dots, l\}$ and let $Q_h$, $R_h$ be, respectively, the rotation and the map given by in \Cref{prop:approximated_rotation}. Define
	\[
	\tilde y_h = y_h  - Q_h U_j\begin{pmatrix}
	x' \\ h x_3
	\end{pmatrix} - c_h \,,
	\] 
	where
	\[
	c_h = \dfrac{1}{|\Omega|}\int_\Omega \left(y_h - Q_h U_j \begin{pmatrix}
	x' \\ hx_3
	\end{pmatrix}\right) \, dx \,.
	\]
	Using the test deformation
	\[
	(x', x_3) \mapsto U_j \begin{pmatrix}
	x' \\ hx_3
	\end{pmatrix}
	\]
	and the assumption \eqref{eq:convergence_forces}, we get $\inf_y J_h^\alpha(y) \leq Ch^{1-\gamma}$, so that
	\[
		J_h^\alpha(y_h) - C h^{1-\gamma} \leq J_h^\alpha(y_h) - \inf_y J_h^\alpha(y) \leq C \implies J_h^\alpha(y_h) \leq Ch^{1-\gamma} \,.
	\]
	We have
	\begin{equation}\label{eq:dim_forze_eq_1}
	\begin{aligned}
	h^\alpha E_h^\alpha(y_h) & = h^\alpha J_h^\alpha(y_h) + \int_\Omega f_h \cdot \tilde y_h \, dx  + \int_S f_h \cdot Q_hU_j \begin{pmatrix}
	x' \\ 0
	\end{pmatrix} \, dx \\
	& \leq Ch^{\gamma+1} + Ch^{\gamma+1}\|\nabla_h \tilde y_h\|_{L^q} \,.
	\end{aligned}
	\end{equation}
	Consider the set
	\[
	\tilde \Omega_h = \{x \in \Omega \colon \dist(\nabla_h y_h, K) \geq 1\}\,.
	\]
	By \Cref{prop:approximated_rotation} and \Cref{remark:approximated_rotation_Lr}, we deduce
	\begin{equation}\label{eq:dim_forze_eq_2}
		\begin{aligned}
		\|\nabla_h \tilde y_h\|_{L^q}^q & = \int_\Omega |\nabla_h y_h - Q_hU_j|^q \, dx \leq Ch^{-2} \int_\Omega \dist^q(\nabla_h y_h, K_j) \, dx\\
		& \leq Ch^{-2}\int_\Omega \dist^q(\nabla_h y_h, K) \, dx + Ch^{-2}\\
		& \leq Ch^{-2} \int_{\tilde \Omega_h} \dist^q(\nabla_h y_h, K) \, dx + Ch^{-2}\\
		& \leq Ch^{\alpha-2} E_h^\alpha(y_h) + Ch^{-2} \,,
		\end{aligned}
	\end{equation}
	where we have used \ref{item:lower_bound_W}. 
	Combining \eqref{eq:dim_forze_eq_1}-\eqref{eq:dim_forze_eq_2} we get
	\[
	h^\alpha E_h^\alpha(y_h) \leq Ch^{\gamma + 1} + Ch^{\gamma+1 - \frac{2}{q}}(h^\alpha E_h^\alpha(y_h))^{\frac{1}{q}}  +Ch^{\gamma + 1 - \frac{2}{q}} \,.
	\]
	Recalling that $q > 1$, by Young's inequality we deduce that
	\[
	h^\alpha E_h^\alpha(y_h) \leq Ch^{\gamma +1} + Ch^{q'(\gamma + 1 - \frac{2}{q})} + Ch^{\gamma + 1 - \frac{2}{q}} \to 0\,.
	\]
	Hence, we can apply \Cref{prop:compactness} and deduce that, at least along a subsequence, there is an index $j_0 \in \{1, \dots, l\}$ such that for $h \ll 1$
	\[
	\int_\Omega \dist^{2}(\nabla_h y_h, K_{j_0}) \,, dx \leq h^{\alpha}[E_h^{\alpha}(y_h) + (E_h^{\alpha}(y_h))^\theta] \,.
	\]

	We now show that $E_h^\alpha(y_h) \leq C$. To simplify the exposition, we suppose that $j_0 = j$. By \Cref{prop:approximated_rotation} we have
	\begin{equation}\label{eq:bound_yh_energy}
		\begin{aligned}
			\|\tilde y_h\|_{L^2}^2 & \leq C \|\nabla \tilde y_h\|_{L^2}^2 \leq Ch^{-2} \|\dist(\nabla_h y_h, K_j)\|^2_{L^2}\\
			& \leq Ch^{\alpha-2}[E_h^{\alpha}(y_h) + (E_h^{\alpha}(y_h))^\theta] \,.
		\end{aligned}
	\end{equation}
	Pick $\bar R_h U_{k_h} \in \mathcal{M}_h$ and define
	\[
	\tilde J^\alpha_h(y) := E^{\alpha}_h(y) - \dfrac{1}{h^\alpha} \int_\Omega f_h \cdot y \, dx + \dfrac{1}{h^\alpha} \int_S f_h \cdot \bar R_h U_{k_h} \begin{pmatrix}
	x' \\ 0
	\end{pmatrix} \, dx \,.
	\]
	Note that $y_h$ are quasi-minimizers for the functional $\tilde J_h^\alpha$. Moreover, using the test deformation
	\[
		(x', x_3) \mapsto \bar R_h U_{k_h}\begin{pmatrix}
		x' \\ hx_3
		\end{pmatrix} \,,
	\]
	we get $\inf_y \tilde J_h^\alpha(y) \leq 0$, so that
	\[
		\tilde J_h^\alpha(y_h) \leq \tilde J_h^{\alpha}(y_h) - \inf_y \tilde J_h^\alpha(y) \leq C \,.
	\]
	Hence,
	\begin{align}
	E_h^\alpha(y_h) & = \tilde J^\alpha_h(y_h) + \dfrac{1}{h^\alpha} \int_\Omega f_h \cdot y_h \, dx - \dfrac{1}{h^\alpha} \int_S f_h \cdot \bar R_h U_{k_h} \begin{pmatrix}
	x' \\ 0
	\end{pmatrix} \, dx' \\
	& = \tilde J^\alpha_h(y_h) + \dfrac{1}{h^\alpha} \int_\Omega f_h \cdot \tilde y_h \, dx + \dfrac{1}{h^\alpha} \int_S f_h \cdot (Q_h U_j  - \bar R_h U_{k_h}) \begin{pmatrix}
	x' \\ 0
	\end{pmatrix} \, dx'\\
	& \leq C + Ch^{-\gamma -1}\|f_h\|_{L^2}(E_h^{\alpha}(y_h) + (E_h^\alpha(y_h))^\theta )^{\frac{1}{2}}\\
	& \leq C + C(E_h^\alpha(y_h))^\frac{1}{2} + C(E_h^\alpha(y_h))^\frac{\theta}{2}\,,
	\end{align}
	where we have used \eqref{eq:bound_yh_energy} and the definition of $\mathcal{M}_h$.
	Since $0 < \frac{\theta}{2} < 1$, by an application of Young's inequality we conclude.
\end{proof}

We are in a position to conclude the proof of \Cref{teo:convergence_minimizers_alpha_2}.

\begin{proof}[Proof of \Cref{teo:convergence_minimizers_alpha_2}]
	By \Cref{lemma:compactness_forces} and \Cref{teo:gamma_convergence_alpha_2}--\ref{item:compactness_alpha_2} there exists $j \in \{1, \dots, l\}$ and $y \in W^{2, 2}(S; \r^3)$ satisfying $(\nabla y)^T \nabla y = (U_j^2)'$ such that, up to a subsequence, $(\nabla_h y_h) \to (\nabla y, \nu)$ in $L^2(\Omega; \r^{3 \times 3})$, where $\nu$ is defined as in \Cref{lemma:definition_nu}. Let
	\[
		c_h = \dfrac{1}{|\Omega|}\int_\Omega y_h \, dx
	\]
	and define $\tilde y_h := y_h - c_h$. By an application of the Poincaré-Wirtinger inequality, we deduce that $\tilde y_h \to y$ in $W^{1, 2}(S; \r^3)$. By the strong convergence of $\frac{1}{h^2}f_h$, we get
	\begin{equation}\label{eq:strong_convergence_force_term}
		\dfrac{1}{h^2} \int f_h \cdot y_h \, dx = \dfrac{1}{h^2} \int f_h \cdot \tilde y_h \, dx \to \int_S f \cdot y \, dx \,.
	\end{equation}
	By \Cref{teo:gamma_convergence_alpha_2}--\ref{item:liminf_inequality_alpha_2} we immediately deduce that
	\begin{equation}\label{eq:liminf_total_energy}
		\liminf_{h \to 0} J^2_h(y_h) \geq E^{\KL}_j(y) - \int_S f \cdot y \, dx = J_j^{\KL}(y)\,.
	\end{equation}
	Now let $i \in \{1, \dots, l\}$ and $y' \in W^{2, 2}(S; \r^3)$ be such that $(\nabla y')^T \nabla y' = (U_i^2)'$. Let $(\tilde y_h')$ be the recovery sequence for $y'$ provided by \Cref{teo:gamma_convergence_alpha_2}--\ref{item:recovery sequence_alpha_2}. Then 
	\begin{align}
		J_j^{\KL}(y) & \leq \liminf_{h \to 0} J_h^2(y_h) \leq \limsup_{h \to 0} J_h^2(y_h) = \limsup_{h \to 0} (\inf_y J_h^2(y))\\
		& \leq \limsup_{h \to 0} J_h^2(y_h') = \lim_{h \to 0} J_h^2(y_h') = J_i^{\KL}(y') \,,
	\end{align}
	concluding the proof.
\end{proof}

We move now to the case $\alpha > 2$. To start off, we recall some properties of the optimal rotations. We focus on the set $\mathcal{M}$ but all the arguments can be replicated for $\mathcal{M}_h$. Firstly, observe that given a rotation $R \in \SO(3)$ and a skew-symmetric matrix $W \in \r^{3 \times 3}_{\skw}$ the map $\phi(t) = Re^{tW}U_j$ takes values in $K$ for any choice of $j \in \{1, \dots, l\}$. In particular, if $RU_j \in \mathcal{M}$, then by differentiating the map $t \mapsto F(\phi(t))$ at $t = 0$ we obtain
\[
	F(RWU_j) = 0 \quad \text{and} \quad F(RW^2U_j) \leq 0 \quad \forall \, W \in \r^{3 \times 3}_{\skw}\,, \forall \, j \in \Lambda \,, \forall \, R \in \mathcal{R}^j \,.
\]
The missing information that we need to prove \Cref{teo:convergence_minimizers} is the well-posedness of the projection operator $P_h^j$ along a sequence of rotations converging to some element of $\mathcal{R}^j$. In order to do so we start proving some useful lemmas. The proof of the first one is trivial and is omitted.
\begin{lemma}\label{lemma:gamma_convergence_F}
	The sequence of functional $-\dfrac{1}{h^{\gamma+1}} F_h$ $\Gamma$-converges to $-F$. In particular, given a sequence $(R_h U_{k_h})$ such that $R_h U_{k_h} \in \mathcal{M}_h$ for every $h$ and $R_h U_{k_h} \to RU_j$ for some $j \in \{1, \dots, l\}$, we have $j \in \Lambda$ and $RU_j \in \mathcal{M}$.
\end{lemma}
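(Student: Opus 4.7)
The plan is to upgrade the $L^{q'}$-convergence of the rescaled loads in \eqref{eq:convergence_forces} to uniform convergence of $\tfrac{1}{h^{\gamma+1}}F_h$ to $F$ on the compact set $K$. Once this is available, both the $\Gamma$-convergence statement and the stability of maximizers follow from entirely soft arguments: for continuous functionals on a compact metric space, uniform convergence forces $G_h(x_h)\to G(x)$ whenever $x_h\to x$, which is tantamount to $\Gamma$-convergence of $-F_h/h^{\gamma+1}$ to $-F$ (take $x_h\equiv x$ as recovery sequence).

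The first step is a Hölder estimate. For every $A\in K$,
\begin{equation*}
\left| \frac{1}{h^{\gamma+1}} F_h(A) - F(A) \right| \leq \left\| \frac{f_h}{h^{\gamma+1}} - f \right\|_{L^{q'}(S;\r^3)} \left\| A \begin{pmatrix} x' \\ 0 \end{pmatrix} \right\|_{L^q(S;\r^3)}.
\end{equation*}
Compactness of $K$ gives $\sup_{A\in K}|A|<\infty$, and $S$ is bounded, so the $L^q$ factor on the right is bounded by a constant independent of $A$. Thus \eqref{eq:convergence_forces} implies $\sup_{A\in K} |F_h(A)/h^{\gamma+1} - F(A)| \to 0$, i.e., uniform convergence on $K$.

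Second, for any sequence $A_h \to A$ in $K$, combining the uniform convergence with the continuity of $F$ on $K$ yields
\begin{equation*}
\left| \frac{F_h(A_h)}{h^{\gamma+1}} - F(A) \right| \leq \sup_{B \in K} \left| \frac{F_h(B)}{h^{\gamma+1}} - F(B) \right| + |F(A_h) - F(A)| \to 0.
\end{equation*}
In particular, $-F_h(A_h)/h^{\gamma+1} \to -F(A)$; this covers the $\Gamma$-liminf inequality for every converging sequence, and, with the constant sequence $A_h \equiv A$, also the $\Gamma$-limsup inequality. Hence $-F_h/h^{\gamma+1}$ $\Gamma$-converges to $-F$ on $K$.

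For the second assertion, let $R_h U_{k_h} \in \mathcal{M}_h$ with $R_h U_{k_h} \to R U_j$. Since $k_h \in \{1,\dots,l\}$ is finite-valued, up to a subsequence $k_h \equiv k$ is constant, and the disjointness of the wells assumed in \Cref{section:notations} forces $k = j$ and $R_h \to R$. For every $A\in K$, the maximality $F_h(R_h U_j) \geq F_h(A)$ divided by $h^{\gamma+1}$ and passed to the limit using the uniform convergence above yields $F(RU_j) \geq F(A)$. Hence $RU_j \in \mathcal{M}$ and, by definition, $j \in \Lambda$. No step presents a genuine obstacle; the only slight care is to ensure that the $L^q$ factor in the Hölder estimate is uniform in $A\in K$, which is immediate from compactness of $K$.
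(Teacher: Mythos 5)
Your proposal is correct, and it is essentially the argument the paper intends: the paper omits the proof of this lemma as trivial, and your route (H\"older estimate giving uniform, hence continuous, convergence of $h^{-(\gamma+1)}F_h$ to $F$ on the compact set $K$, then passing to the limit in the maximality inequality and using closedness and disjointness of the wells to identify the limiting well) is exactly the expected one.
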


\begin{lemma}\label{lemma:convergence_normals_R_h}
	Let $j \in \{1, \dots, l\}$. Let $(R_h) \subset \SO(3)$ such that $R_h \in \mathcal{R}^j_h$ for every $h$ and let $(W_h) \subset \r^{3 \times 3}_{\skw}$ be a sequence such that $R_hW_h$ is normal to $\mathcal{R}_h^j$ at the point $R_h$ and $|W_h| = 1$ for every $h$. Then, up to a subsequence, we have $R_h W_h \to RW$, where $R \in \mathcal{R}^j$, $W \in \r^{3 \times 3}_{\skw}$, $|W| = 1$, and $RW \in N \mathcal{R}^j_R$.
\end{lemma}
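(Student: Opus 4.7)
My plan is to extract converging subsequences, verify that the limit of $R_h$ lies in $\mathcal{R}^j$, and then use the totally geodesic structure of $\mathcal{R}_h^j$ and $\mathcal{R}^j$, together with assumption \ref{item:convergence_dimensions}, to pass the normality condition to the limit.

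By compactness of $\SO(3)$ and of the unit sphere in $\r^{3 \times 3}_{\skw}$ (recall $|W_h|=1$), I first extract a non-relabeled subsequence so that $R_h \to R$ in $\SO(3)$ and $W_h \to W$ in $\r^{3 \times 3}_{\skw}$, with $W$ skew and $|W|=1$; in particular $R_h W_h \to RW$. To see that $R \in \mathcal{R}^j$, the strong convergence $h^{-(\gamma+1)}f_h \to f$ in $L^{q'}(S;\r^3)$ yields uniform convergence on the compact set $\SO(3)$ of $Q \mapsto h^{-(\gamma+1)}F_h(QU_j)$ to $Q \mapsto F(QU_j)$; since $R_h$ maximizes the former, the limit $R$ maximizes the latter, i.e., $R \in \mathcal{R}^j$.

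The heart of the proof is showing $RW \in N\mathcal{R}^j_R$. Since $\SO(3)$ has no connected Lie subgroup of dimension $2$, connected totally geodesic submanifolds of $\SO(3)$ are cosets of $\{e\}$, of a $1$-parameter subgroup, or the whole group, so $\dim \mathcal{R}^j \in \{0,1,3\}$; by \ref{item:convergence_dimensions}, the same holds for $\mathcal{R}_h^j$ with matching dimension for $h$ small. If $\dim \mathcal{R}^j = 3$, then $\mathcal{R}_h^j = \SO(3)$, forcing $N\mathcal{R}_h^j{}_{R_h} = \{0\}$, which contradicts $|R_hW_h|=1$; this case is vacuous. If $\dim \mathcal{R}^j = 0$, then $T\mathcal{R}^j_R = \{0\}$ and the claim is immediate.

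In the remaining one-dimensional case, I write $\mathcal{R}_h^j = R_h \exp(\r U_0^h)$ and $\mathcal{R}^j = R \exp(\r U_0)$ for unit skew generators $U_0^h$ and $U_0$. Since left multiplication by a rotation preserves the Frobenius inner product, the hypothesis $R_hW_h \in N\mathcal{R}_h^j{}_{R_h}$ reduces to $W_h : U_0^h = 0$. Up to a further subsequence, $U_0^h \to \bar U_0$, with $\bar U_0$ skew and of unit norm. To identify $\bar U_0$ with $\pm U_0$, I observe that for every $s \in \r$ the point $R_h \exp(s U_0^h) \in \mathcal{R}_h^j$ maximizes $Q \mapsto h^{-(\gamma+1)}F_h(QU_j)$; by uniform convergence its limit $R \exp(s \bar U_0)$ maximizes $Q \mapsto F(QU_j)$ and hence belongs to $\mathcal{R}^j = R\exp(\r U_0)$. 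Varying $s$ in a neighborhood of $0$ where $\exp$ is a diffeomorphism onto its image forces $\bar U_0 \in \r U_0$, and matching norms gives $\bar U_0 = \pm U_0$. Passing to the limit in $W_h : U_0^h = 0$ then yields $W : U_0 = 0$, i.e., $RW \perp T\mathcal{R}^j_R$ and thus $RW \in N\mathcal{R}^j_R$. The main obstacle is precisely this last identification: without \ref{item:convergence_dimensions} the dimension of $\mathcal{R}_h^j$ could drop in the limit, allowing the normal directions $R_h W_h$ to accumulate outside $N\mathcal{R}^j_R$.
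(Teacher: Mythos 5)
Your approach differs substantially from the paper's, and one step has a genuine gap. The paper's argument is dimension-agnostic: invoking \ref{item:convergence_dimensions}, it fixes $m = \dim \mathcal{R}^j = \dim \mathcal{R}_h^j$ for $h$ small, picks an orthonormal basis $\{R_h W_h^1, \dots, R_h W_h^m\}$ of $T\mathcal{R}_h^j{}_{R_h}$, shows the limit matrices $\{RW^1, \dots, RW^m\}$ form an orthonormal basis of $T\mathcal{R}^j_R$ (each belongs to the tangent space because $F(R(W^i)^2U_j) = \lim_{h} h^{-(\gamma+1)}F_h(R_h(W_h^i)^2U_j) = 0$), then expands an arbitrary $RM \in T\mathcal{R}^j_R$ in that basis, approximates it by tangent matrices $R_h M_h = \sum_i \lambda_i R_h W_h^i$, and passes to the limit in $0 = R_h W_h : R_h M_h$. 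No classification of $\mathcal{R}^j$ is needed. You instead split on $\dim \mathcal{R}^j$ and, in the one-dimensional case, identify the generator $\bar U_0$ with $\pm U_0$ before passing to the limit in $W_h : U_0^h = 0$; that case, and the trivial cases $\dim = 0, 3$, are handled correctly.

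The gap is the classification step. The inference ``$\SO(3)$ has no connected Lie subgroup of dimension $2$, hence connected totally geodesic submanifolds of $\SO(3)$ are cosets of $\{e\}$, of a $1$-parameter subgroup, or the whole group'' does not hold: with its bi-invariant metric $\SO(3)$ is isometric (up to scaling) to real projective $3$-space with the round metric, which does contain closed, connected, boundaryless, totally geodesic $2$-dimensional submanifolds (images of equatorial $S^2 \subset S^3$, copies of $\r P^{2}$), and these are not cosets of subgroups. So ``totally geodesic'' alone does not force $\dim \mathcal{R}^j \in \{0,1,3\}$, and as written your proof silently ignores the possible $2$-dimensional case. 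The restriction on the dimension is nevertheless true, but the reason is the stronger structural fact from \cite{MAOR2021} that $\mathcal{R}^j$ (resp.\ $\mathcal{R}_h^j$) is a \emph{coset of a closed connected Lie subgroup} of $\SO(3)$, combined with the algebraic fact that $\mathfrak{so}(3)$ has no $2$-dimensional Lie subalgebra. If you replace your justification by this coset property, the case analysis becomes sound; the paper's basis argument avoids the issue entirely.
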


\begin{proof}
	Up to subsequences, we have that $R_h \to R$ and $W_h \to W$ with $R \in \SO(3)$, $W \in \r^{3 \times 3}_{\skw}$, and $|W| = 1$. By \Cref{lemma:gamma_convergence_F} we have that $R \in \mathcal{R}^j$, thus we just need to prove that $RW \in N\mathcal{R}^j_R$. Let $m = \dim \mathcal{R}^j$. By \ref{item:convergence_dimensions}, $m = \dim \mathcal{R}^j_h$ for $h \ll 1$. Consider an orthonormal basis of the tangent space $T \mathcal{R}_h^j{}_{R_h}$ given by $\{R_h W_h^1, \dots, R_h W_h^m\}$. Then, up to a subsequence, we have $R_h W_h^i \to R W^i$ for some $W^i \in \r^{3 \times 3}_{\skw}$. Clearly the matrices $\{R W^1, \dots, R W^m\}$ are orthonormal. Moreover, since
	\[
	0 = \dfrac{1}{h^{\gamma+1}}F_h(R_h (W^i_h)^2 U_j) \to F(R (W^i)^2 U_j) \quad \forall \, i =1, \dots, m 
	\]
	it follows that $\{R W^1 U_j, \dots, R W^m U_j\}$ is an orthonormal basis of $T\mathcal{R}^j_R$. 
	
	Consider now a matrix $M \in \r^{3 \times 3}_{\skw}$ such that $RM \in T\mathcal{R}^j_R$. Then, we can write $RM = \sum_{i = 1}^m \lambda_i RW^i$ for some $\lambda_1, \dots, \lambda_m \in \r$. Define $M_h = \sum_{i=1}^m \lambda_i W_h^i$. By construction, we have $R_h M_h \to RM$. Moreover, $R_h M_h$ is tangent to $\mathcal{R}_h^j$ at the point $R_h$, so that
	\[
		0 = R_hW_h : R_h M_h \to RW : RM \,.
	\]
	Since $M$ is arbitrary, this concludes the proof.
\end{proof}

\begin{remark}
	\Cref{lemma:convergence_normals_R_h} proves also that a sequence of tangent matrices to $\mathcal{R}_h^j$ at a point $R_h$ converges to a tangent matrix to $\mathcal{R}^j$ at the point $R$, where $R$ is the limit of $R_h$.
\end{remark}

\begin{prop}\label{prop:R_tilde_R}
	For $j \in \{1, \dots, l\}$ let
	\[
		\tilde{\mathcal{R}}^j = \{R \in \mathcal{R}^j \colon \exists \, (R_h) \subset \SO(3) \, \, \text{s.t.} \, \, R_h \in \mathcal{R}_h^j \, \, \text{for every $h > 0$ and} \, \, R_h \to R\} \,.
	\]
	Then $\tilde{\mathcal{R}}^j = \mathcal{R}^j$.
\end{prop}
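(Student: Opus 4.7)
The inclusion $\tilde{\mathcal{R}}^j \subseteq \mathcal{R}^j$ is immediate from the definition, so the plan will focus on the reverse inclusion. The strategy is to classify the possible dimensions of $\mathcal{R}^j$, dispatch the trivial extremes, and then use the totally geodesic structure to lift curves from $\mathcal{R}^j$ to $\mathcal{R}^j_h$ in the substantial case.

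First I would observe that a connected totally geodesic submanifold of $\SO(3)$ has dimension in $\{0,1,3\}$: the Lie subalgebras of $\mathfrak{so}(3) \cong (\r^3, \times)$ have dimensions $0$, $1$ or $3$, since any two linearly independent vectors in $\r^3$ have a cross product outside their span, ruling out $2$-dimensional subalgebras. Combined with \ref{item:convergence_dimensions}, this lets me assume $\dim \mathcal{R}^j_h = \dim \mathcal{R}^j$ for $h \ll 1$. The cases $\dim \mathcal{R}^j = 3$ (where both $\mathcal{R}^j_h$ and $\mathcal{R}^j$ coincide with $\SO(3)$, so the constant sequence $R_h = R$ works) and $\dim \mathcal{R}^j = 0$ (where $\mathcal{R}^j$ is a singleton by connectedness, $\mathcal{R}^j_h \neq \emptyset$ by \ref{item:lambda_h_equals_lambda}, and every subsequential limit of $R_h \in \mathcal{R}^j_h$ lies in $\mathcal{R}^j$ by \Cref{lemma:gamma_convergence_F}, forcing convergence of the whole sequence) are immediate.

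The core case is $\dim \mathcal{R}^j = 1$. I would pick any $R'_h \in \mathcal{R}^j_h$ via \ref{item:lambda_h_equals_lambda}, extract a subsequence with $R'_h \to R' \in \mathcal{R}^j$, and choose unit $W_h \in \r^{3 \times 3}_{\skw}$ spanning the $1$-dimensional tangent line $T\mathcal{R}^j_h{}_{R'_h}$. Up to further extraction $W_h \to W$ with $|W| = 1$. Passing to the limit in the tangency identity $F_h(R'_h W_h^2 U_j) = 0$, using the uniform convergence of $h^{-(\gamma+1)} F_h$ to $F$ on the compact set $K$ (which follows from $h^{-(\gamma+1)} f_h \to f$ in $L^{q'}$ via a routine Hölder estimate), yields $F(R' W^2 U_j) = 0$, so $R'W$ is a unit vector spanning the $1$-dimensional tangent line $T\mathcal{R}^j_{R'}$. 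Since $\mathcal{R}^j$ is a closed, connected, boundaryless, totally geodesic submanifold of dimension $1$ in $\SO(3)$, the geodesic $t \mapsto R' e^{tW}$ stays in $\mathcal{R}^j$ and, being a closed $1$-dimensional submanifold (topologically a circle), exhausts $\mathcal{R}^j$. So any $R \in \mathcal{R}^j$ can be written $R = R' e^{tW}$ for some $t \in \r$. Setting $R_h := R'_h e^{tW_h}$, the same totally geodesic argument on $\mathcal{R}^j_h$ gives $R_h \in \mathcal{R}^j_h$, while continuity of the matrix exponential yields $R_h \to R$, proving $R \in \tilde{\mathcal{R}}^j$.

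The main technical hurdle is the transfer of tangency in the limit: showing that the sequence of unit tangent directions $R'_h W_h$ converges to a unit tangent direction of $\mathcal{R}^j$ at $R'$. This mirrors the argument used in the proof of \Cref{lemma:convergence_normals_R_h} for normal vectors, but the key point is that the characterization of tangency in terms of the vanishing of $F_h$ on $R'_h W_h^2 U_j$ is stable under the $\Gamma$-convergence of $-h^{-(\gamma+1)} F_h$ to $-F$, strengthened to uniform convergence on the compact set $K$. Once this tangent-continuity is in hand, the totally geodesic structure does all the heavy lifting, and the only nontrivial geometric input is the classification of dimensions for totally geodesic submanifolds of $\SO(3)$.
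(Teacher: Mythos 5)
Your approach is genuinely different from the paper's. The paper argues that $\tilde{\mathcal{R}}^j$ is a nonempty subset of the connected manifold $\mathcal{R}^j$ that contains, around each of its points $R$, the full image of the exponential map on $T\mathcal{R}^j_R$: starting from the sequence $R_h \to R$, one shows that orthonormal bases of $T\mathcal{R}^j_h{}_{R_h}$ converge to a basis of $T\mathcal{R}^j_R$, so every $W \in T\mathcal{R}^j_R$ is approached by tangent directions $W_h$, and total geodesicness gives $R_h e^{W_h} \in \mathcal{R}^j_h$ with limit $Re^W$. This is a single uniform argument; your case split by dimension is more explicit but requires an extra input (the dimension classification) that the paper's argument never needs. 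Within the dimension-$1$ case your use of the geodesic-circle parametrization of $\mathcal{R}^j$ and the lifted sequence $R_h = R'_h e^{tW_h}$ is sound, modulo the usual care about subsequences (one should phrase it as $\dist(R, \mathcal{R}^j_h) \to 0$ and argue by contradiction, since your extractions only give convergence along a subsequence while the definition of $\tilde{\mathcal{R}}^j$ asks for a sequence indexed by all $h$; the paper's proof has the same implicit step).

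There is, however, a genuine gap in the step that anchors the whole case split. You justify $\dim \mathcal{R}^j \in \{0,1,3\}$ by the fact that Lie subalgebras of $\mathfrak{so}(3)$ have dimension $0$, $1$, or $3$. But $\mathcal{R}^j$ is only known to be a closed, connected, totally geodesic submanifold, and totally geodesic submanifolds of a Lie group with bi-invariant metric correspond to Lie \emph{triple systems}, not subalgebras. In $\mathfrak{so}(3) \cong (\r^3, \times)$ the subspace $\vspan\{e_1, e_2\}$ is a two-dimensional Lie triple system (indeed $(e_1 \times e_2) \times e_i \in \vspan\{e_1, e_2\}$), and the corresponding great $2$-sphere in $\SU(2)$ descends to a closed, connected, boundaryless, totally geodesic copy of $\r\mathrm{P}^2$ inside $\SO(3)$. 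So your general claim is false, and the cross-product observation you invoke rules out $2$-dimensional \emph{subgroups}, not $2$-dimensional totally geodesic submanifolds. The conclusion $\dim \mathcal{R}^j \in \{0,1,3\}$ is nonetheless true here, but for a different reason: $\mathcal{R}^j = \argmax_{R \in \SO(3)} \langle N, R\rangle$ with $N = M U_j^T$, where $M_{ik} = \int_S f_i\,(x',0)_k\,dx'$ has identically zero third column, so $\rank N \leq 2$; an SVD analysis then shows the maximizer set is a singleton when $\rank N = 2$, a circle (the stabilizer of a vector, conjugated) when $\rank N = 1$, and all of $\SO(3)$ when $N = 0$. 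You would need to replace the Lie-subalgebra argument by this computation, or by establishing that $\mathcal{R}^j$ is actually a left–right coset of a closed subgroup of $\SO(3)$ (which is true in this linear-functional setting but must be proved), for your case analysis to be complete.
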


\begin{proof}
	Take $R \in \tilde{\mathcal{R}}^j$. We show that $\tilde{\mathcal{R}}^j$ is the image of $T\mathcal{R}^j_R$ through the map
	\[
		T\mathcal{R}^j_R \to \SO(3)\,, \quad RW \mapsto Re^W\,,
	\]
	in a neighborhood of $R$. In particular, this proves that $\tilde{\mathcal{R}}^j$ is an embedded submanifold of $\mathcal{R}^j$ and that the tangent spaces coincide, concluding the proof. 
	
	There exists a sequence $(R_h) \subset \SO(3)$ such that $R_h \in \mathcal{R}_h^j$ for every $h$ and $R_h \to R$. For $h \ll 1$, take an orthonormal basis $\{R_hW^1_h, \dots, R_hW^m_h\}$ of $T \mathcal{R}_h^j{}_{R_h}$, where $m = \dim\mathcal{R}_h^j = \dim \mathcal{R}^j$. Then $R_h W_h^i \to RW^i$ and since
	\[
		0 = \dfrac{1}{h^{\gamma+1}}F_h(R(W_h^i)^2U_j) \to F(R(W^i)^2U_j)\,,
	\]
	$\{RW^1, \dots, RW^m\}$ is an orthonormal basis of $T\mathcal{R}^j_R$. Now pick $W \in T\mathcal{R}^j_R$. By the convergence of the base we can construct a sequence $(W_h)$ such that $W_h \to W$ and $R_hW_h \in T\mathcal{R}_h^j$ for every $h \ll 1$. By \cite[Lemma 4.4]{MAOR2021} we have $R_he^{W_h} \in \mathcal{R}_h^j$. Thus, passing to the limit, we get by \Cref{lemma:gamma_convergence_F} that $Re^W \in \mathcal{R}^j$, that is, by definition $Re^W \in \tilde{\mathcal{R}}^j$.
\end{proof}

The above results guarantee the well-posedness of the projection.

\begin{prop}\label{prop:projection_well_posed}
	Let $(R_h)	\subset \SO(3)$ be a sequence such that $R_h \to R$ and $R \in \mathcal{R}^j$ for some $j \in \{1, \dots, l\}$. Then $P_h^j(R_h)$ is well-defined for $h \ll 1$.
\end{prop}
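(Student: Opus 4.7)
The plan is to combine the convergence provided by \Cref{prop:R_tilde_R} with a uniform tubular-neighborhood argument. I would first invoke \Cref{prop:R_tilde_R} to extract a sequence $(\bar R_h) \subset \SO(3)$ with $\bar R_h \in \mathcal{R}_h^j$ for every $h$ and $\bar R_h \to R$. This immediately gives
\[
    \dist_{\SO(3)}(R_h, \mathcal{R}_h^j) \leq \dist_{\SO(3)}(R_h, \bar R_h) \to 0\,,
\]
so a nearest point in $\mathcal{R}_h^j$ to $R_h$ exists by compactness of $\mathcal{R}_h^j$. The heart of the statement is therefore the \emph{uniqueness} of such a nearest point for $h$ sufficiently small, together with the fact that $R_h$ lies in the region where the implicit function theorem gives a smooth projection.

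Next, I would exploit the local structure of $\mathcal{R}_h^j$ already used in the proof of \Cref{prop:R_tilde_R}: in a neighborhood of $\bar R_h$, the submanifold $\mathcal{R}_h^j$ is the image of a ball in $T\mathcal{R}_h^j{}_{\bar R_h}$ under the map $\bar R_h W \mapsto \bar R_h e^W$. Since $\SO(3)$ (with its bi-invariant metric) has a positive universal injectivity radius, and by \ref{item:convergence_dimensions} we have $\dim \mathcal{R}_h^j = \dim \mathcal{R}^j$ for $h \ll 1$, this parametrization is valid on a ball of radius $\varepsilon > 0$ independent of $h$. Working in Fermi-type coordinates centered at $\bar R_h$ and writing $R_h = \bar R_h e^{M_h}$ with $M_h \in \r^{3 \times 3}_{\skw}$ and $|M_h| \to 0$, the squared geodesic distance from $R_h$ to $\mathcal{R}_h^j$ agrees, up to higher-order corrections controlled uniformly in $h$, with the squared Euclidean distance from $M_h$ to the linear subspace obtained by pulling back $T\mathcal{R}_h^j{}_{\bar R_h}$ to $\r^{3 \times 3}_{\skw}$ via left-translation. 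The Euclidean minimizer is unique (the orthogonal projection), and an application of the implicit function theorem to the first-order optimality condition transports this uniqueness to the true geodesic projection onto $\mathcal{R}_h^j$, yielding the well-posedness of $P_h^j(R_h)$.

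The main obstacle is the uniformity in $h$: one needs the radius $\varepsilon$ on which the local parametrization is a diffeomorphism, and the strict positive-definiteness of the normal Hessian of the squared-distance function, to be bounded below by quantities independent of $h$. This is precisely where hypothesis \ref{item:convergence_dimensions} is used, together with the compactness of $\SO(3)$: the family $\{\mathcal{R}_h^j\}_h$ consists of closed, totally geodesic submanifolds of a fixed dimension inside the compact manifold $\SO(3)$, and under these constraints the tubular-neighborhood radius admits a uniform positive lower bound. Once this is in place, $R_h$ lies in such a tube for all $h$ small enough, and the projection is uniquely defined.
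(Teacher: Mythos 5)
Your proposal is correct and follows the same core route as the paper: both invoke \Cref{prop:R_tilde_R} to produce a sequence $\tilde R_h \in \mathcal{R}_h^j$ with $\tilde R_h \to R$ and thereby conclude $\dist_{\SO(3)}(R_h, \mathcal{R}_h^j) \to 0$. The one difference is that the paper's proof stops there, implicitly treating ``$\dist_{\SO(3)}(R_h, \mathcal{R}_h^j) \to 0$ suffices'' as immediate from the earlier discussion of $\mathcal{R}_h^j$ as closed totally geodesic submanifolds (following \cite{MAOR2021}), while you spell out the uniform tubular-neighborhood bound that makes this reduction legitimate; your justification via compactness of $\SO(3)$, total geodesy, and \ref{item:convergence_dimensions} is sound (in fact, the bi-invariant homogeneity of $\SO(3)$ makes the tube radius depend only on the dimension of the submanifold, which \ref{item:convergence_dimensions} fixes for $h \ll 1$), so your added detail closes a gap the paper elides rather than introducing a new method.
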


\begin{proof}
	It is sufficient to prove that $\dist_{\SO(3)}(R_h, \mathcal{R}_h^j) \to 0$. By \Cref{prop:R_tilde_R}, there exists a sequence of rotations $\tilde R_h$ such that $\tilde R_h \in \mathcal{R}_h^j$ for every $h$ and $\tilde R_h \to R$. Since
	\[
	\dist_{\SO(3)}(Q', Q) = |Q' - Q| + O(|Q'-Q|^2)
	\]
	for every $Q, Q' \in \SO(3)$, we have
	\[
		\dist_{\SO(3)}(R_h, \mathcal{R}_h^j) \leq \dist_{SO(3)}(R_h, \tilde R_h) = |R_h - \tilde R_h| + O(|R_h-\tilde R_h|^2) \to 0 \,.
	\]
\end{proof}

We are now ready to conclude the proof of \Cref{teo:convergence_minimizers}.

\begin{proof}[Proof of \Cref{teo:convergence_minimizers}]
	By \Cref{lemma:compactness_forces}, we have $E_h^\alpha(y_h) \leq C$. 
	
	We prove that $\bar R^TU_j \in \mathcal{M}$. This will also show that $j \in \Lambda$. Indeed, let $RU_k \in K$ and define
	\[
		y_h'(x', x_3) = RU_k \begin{pmatrix}
		x' \\ hx_3
		\end{pmatrix} \,.
	\]
	We get
	\begin{equation}\label{eq:minimization_forces_eq_1}
		\begin{aligned}
			J_h^\alpha(y_h) - \inf_y J_h^\alpha(y) & \geq J_h^\alpha(y_h) - J_h^\alpha(y_h') \\ 
			& \geq -\dfrac{1}{h^\alpha}\int_\Omega f_h \cdot y_h \, dx + \dfrac{1}{h^\alpha}\int_\Omega f_h \cdot RU_k \begin{pmatrix}
				x' \\ 0
			\end{pmatrix}
			\, dx\\
			& = - \dfrac{1}{h^\alpha} \int_\Omega f_h \cdot \bar R_h^TU_j^{-1}\begin{pmatrix}
				\max\{h^\gamma, h^{2\gamma - 2}\}u_h \\ h^{\gamma - 1} v_h
			\end{pmatrix} \, dx\\
			& \hphantom{=} \, \, + \dfrac{1}{h^\alpha} \int_\Omega f_h \cdot (RU_k - \bar R_h^TU_j)\begin{pmatrix}
			x' \\ 0
			\end{pmatrix} \, dx \,.
		\end{aligned}
	\end{equation}
	Multiplying by $h^{\gamma - 1}$ and passing to the limit we deduce that
	\[
		\int_\Omega f \cdot (RU_k - \bar R^TU_j)\begin{pmatrix}
		x' \\ 0
		\end{pmatrix} \, dx \leq 0 \,,
	\]
	where we have used convergence \eqref{eq:convergence_forces}.
	Since $RU_k \in K$ is arbitrary, we have $\bar R^TU_j \in \mathcal{M}$. 
	
	By \Cref{prop:projection_well_posed}, the projection $P_h^j(R_h^T)$ is well-defined for $h \ll 1$. Let $W_h \in \r^{3 \times 3}_{\skw}$ with $|W_h| = 1$ be such that
	\[
		P^j_h(\bar R_h^T) W_h \in N\mathcal{R}_h^j{}_{P_h^j(\bar R_h^T)}
	\]
	 and $\bar R_h^T = P_h^j(\bar R_h^T)e^{d_h W_h}$, where $d_h = \dist_{\SO(3)}(\bar R_h^T, P_h^j(\bar R_h^T))$. Then, by \eqref{eq:minimization_forces_eq_1} with $RU_k = P_h^j(\bar R_h^T)U_j$ and the fact that $F_h(P_h^j(\bar R_h^T)W_hU_j) = 0$ we have, expanding the exponential map,
	\begin{equation}\label{eq:competitor_minimizing_sequences}
		\begin{aligned}
				J_h^\alpha(y_h) - \inf_y J_h^\alpha(y) & \geq J_h^\alpha(y_h) - J_h^\alpha\left(P_h^j(\bar R_h^T)U_j \begin{pmatrix}
			x' \\ hx_3
			\end{pmatrix}\right) \\
			& \geq -C -\dfrac{d_h^2}{h^\alpha} \int_\Omega f_h \cdot P_h^j(\bar R_h^T) W_h^2 U_j \, dx + O\left(\dfrac{d_h^3}{h^{\gamma-1}}\right) \,.
		\end{aligned}
	\end{equation}
	Clearly $P_h^j(\bar R_h^T) \to \bar R^T$. Moreover, up to subsequences, $W_h \to W$ and by \Cref{lemma:convergence_normals_R_h}, $\bar R^T W \in N \mathcal{R}^j_{\bar R^T}$, thus $F(\bar R^TW^2 U_j) < 0$. We deduce by \eqref{eq:competitor_minimizing_sequences} that $d_h^2 = O(h^{\gamma - 1})$. In particular, there exists $\beta \geq 0$ such that $h^{1-\gamma}d_h^2 \to \beta^2$ and so 
	\[
		h^{\frac{1}{2}(1-\gamma)}(\bar R_h^T - P^j_h(\bar R_h^T)) = \dfrac{d_h}{h^{\frac{1}{2}(\gamma-1)}}P_h^j(\bar R_h^T)W_h + O\left(\dfrac{d_h^2}{h^{\frac{1}{2}(\gamma-1)}}\right) \to \beta \bar R^T W \,.
	\]

	We are left to prove the minimality property. We show it for $2 < \alpha < 4$. The other cases can be proved analogously. Firstly, note that 
	\begin{equation}\label{eq:convergence_force_term}
		\dfrac{1}{h^\alpha}\int_\Omega f_h \cdot y_h \, dx - \dfrac{1}{h^\alpha}F_h(P_h^j(\bar R_h^T)U_j) \to \int_S f \cdot \bar R^T U_j^{-1} e_3 v \, dx - F(\beta^2\bar R^TW^2U_j) \,.
	\end{equation}
	Indeed, we have the equality
	\begin{equation}
		\begin{aligned}
			\dfrac{1}{h^\alpha}\int_\Omega f_h \cdot y_h \, dx - F(P_h^j(\bar R_h^T)U_j) & = \dfrac{1}{h^\alpha} \int_\Omega f_h \cdot \bar R_h^TU_j^{-1}\begin{pmatrix}
			h^{2\gamma - 2}u_h \\ h^{\gamma - 1} v_h
			\end{pmatrix} \, dx\\
			& \hphantom{=} \, \, - \dfrac{1}{h^\alpha} \int_\Omega f_h \cdot (P_h^j(\bar R_h^T) - \bar R_h^T)U_j\begin{pmatrix}
			x' \\ 0
			\end{pmatrix} \, dx \,.
		\end{aligned}
	\end{equation}
	The first term behaves as follows:
	\begin{align}
		\dfrac{1}{h^\alpha} \int_\Omega f_h \cdot \bar R_h^TU_j^{-1}\begin{pmatrix}
		h^{2\gamma - 2}u_h \\ h^{\gamma - 1} v_h
		\end{pmatrix} \, dx & = \int_\Omega \dfrac{1}{h^{\gamma+1}} f_h \cdot \bar R_h^TU_j^{-1}\begin{pmatrix}
		h^{\gamma - 1}u_h \\ v_h
		\end{pmatrix} \, dx \\
		& \to  \int_S f \cdot \bar R^T U_j^{-1} e_3 v \, dx \,.
	\end{align}
	On the other hand, since $F_h(P_h^j(\bar R_h^T)W_h U_j) = 0$, we have
	\begin{equation}
		\dfrac{1}{h^\alpha}F_h(P_h^j(\bar R_h^T)U_j) = \dfrac{d_h^2}{h^{\gamma - 1}}\int_\Omega \dfrac{1}{h^{\gamma + 1}}f_h \cdot P_h^j(\bar R_h^T) W_h^2 U_j \begin{pmatrix}
			x' \\ 0
		\end{pmatrix} \, dx + O\left(\dfrac{d_h^3}{h^{\gamma-1}}\right) \,,
	\end{equation}
	that converges to $F(\beta^2 \bar R^T W^2 U_j)$. Thus, by \Cref{teo:gamma_convergence}--\ref{item:liminf_inequality} we have
	\begin{equation}\label{eq:liminf_inequality_forces}
		\liminf_{h \to 0} (J_h^\alpha(y_h) + F(P_h^j(\bar R_h^T)U_j)) \geq J_j^{CVK}(v, \bar R^T, \beta W) \,.
	\end{equation}

	Take an admissible quadruplet $(i, v', R', W')$ and let $y_h'$ be the recovery sequence for $v'$ provided by \Cref{teo:gamma_convergence}--\ref{item:recovery_sequence_2_alpha_4}. By \Cref{prop:R_tilde_R}, we can construct a sequence of rotations $R'_h$ converging to $R'$ such that $R_h' \in \mathcal{R}_h^i$. Note that, by \ref{item:frame_indifference},
	\[
		E_h^\alpha(y_h') = E_h^\alpha(R'_hy_h') \,,
	\]
	so that $E_h^\alpha(R'_h y_h) \to E_i^{CVK}(v')$. Moreover,
	\begin{align}
		\dfrac{1}{h^\alpha} \int_\Omega f_h \cdot R'_h y_h' - \dfrac{1}{h^\alpha}F_h(R'_h U_i)&  = \dfrac{1}{h^\alpha} \int_S f_h \cdot R'_hU_i^{-1}\begin{pmatrix}
			h^{2\gamma - 2}u_h' \\ h^{\gamma - 1} v_h'
		\end{pmatrix} \, dx \\
		& \to \int_S f \cdot R' U_i^{-1} e_3 v' \, dx \,,
	\end{align}
	where $u_h', v_h'$ are defined as in \eqref{eq:definition_u}--\eqref{eq:definition_v} with $y_h'$ and $U_i$ in place of $y_h$ and $U_j$, respectively. By hypothesis \ref{item:lambda_h_equals_lambda}, we have that $R_h'U_i \in \mathcal{M}_h$ for $h \ll 1$. Thus,
	\[
		F_h(P_h^j(\bar R_h^T)U_j) = F_h(R'_h U_i) \,,
	\]
	and $F(R'(W')^2U_i) \leq 0$. Hence, by \eqref{eq:liminf_inequality_forces} we deduce that
	\begin{align}
		J_j^{CVK}(v, \bar R^T, \beta W) & \leq \liminf_{h \to 0} \left(J_h^\alpha(y_h) + \dfrac{1}{h^\alpha}F_h(P_h^j(\bar R_h^T)U_j)\right)\\
		& \leq \limsup_{h \to 0} \left(J_h^\alpha(y_h) + \dfrac{1}{h^\alpha} F_h(P_h^j(\bar R_h^T)U_j)\right) \\
		& = \limsup_{h \to 0} \left(\inf_y J_h^\alpha(y) + \dfrac{1}{h^\alpha} F_h(P_h^j(\bar R_h^T)U_j)\right)\\
		& \leq \limsup_{h \to 0} \left(J_h^\alpha(R'_h y_h') + \dfrac{1}{h^\alpha} F_h(R_h'U_i)\right) \\
		& = J_i^{CVK}(v', R', 0) \leq J_i^{CVK}(v, R', W') \,,
	\end{align}
	that gives the minimality property.
\end{proof}

\appendix

\section{Construction of isometries}\label{app:appendix}

In this appendix we collect a few results regarding the construction of isometries, mainly based on refinements of \cite[Section 8]{FRIESECKE2006}.

\begin{lemma} \label{lemma:convergence_foundamental_form}
    Let $U \in \r^{3 \times 3}$ be a symmetric and positive definite matrix. For $\varepsilon > 0$ let $y_\varepsilon \in W^{2,2}(S; \r^3)$ be a map of the form
    \[
        y_\varepsilon(x') = U \begin{pmatrix}
            x' \\ 0
        \end{pmatrix} + \varepsilon vU^{-1}e_3 + \varepsilon^2 U\begin{pmatrix}
            u_\varepsilon\\
            0
        \end{pmatrix} \,,
    \]
    such that $\nabla y^T \nabla y = (U^2)'$, where $v \in W^{2,\infty}(S)$ and $u_\varepsilon$ is a bounded sequence in $ W^{2, \infty}(S;\r^2) $. Then, if $\nu_\varepsilon$ is defined as in \Cref{lemma:definition_nu}, we have
    \[
        \nabla y_\varepsilon^T \nabla \nu_\varepsilon = - \varepsilon \nabla^2 v + O(\varepsilon^2) \,.
    \]
\end{lemma}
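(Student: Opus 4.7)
The plan is to reduce the claim to a short inner-product calculation. Since $(\nabla y_\varepsilon, \nu_\varepsilon) U^{-1} \in \SO(3)$ a.e., one has $(\nabla y_\varepsilon, \nu_\varepsilon)^T (\nabla y_\varepsilon, \nu_\varepsilon) = U^2$, so $\partial_i y_\varepsilon \cdot \nu_\varepsilon = (U^2)_{i3}$ is constant in $x'$ for $i = 1, 2$. Differentiating in $x_j$ gives the key identity
\[
	\partial_i y_\varepsilon \cdot \partial_j \nu_\varepsilon = -\partial_{ij} y_\varepsilon \cdot \nu_\varepsilon, \qquad i, j \in \{1, 2\},
\]
so the task reduces to expanding $\partial_{ij} y_\varepsilon \cdot \nu_\varepsilon$ to leading order in $\varepsilon$.

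From the explicit form of $y_\varepsilon$, I read off
\[
	\partial_{ij} y_\varepsilon = \varepsilon \partial_{ij} v \, U^{-1} e_3 + \varepsilon^2 U \begin{pmatrix} \partial_{ij} u_\varepsilon \\ 0 \end{pmatrix},
\]
which is uniformly $O(\varepsilon)$ on $S$ with an explicit leading term, thanks to the $W^{2, \infty}$-bounds on $v$ and on $u_\varepsilon$. For $\nu_\varepsilon$, observe that at $\varepsilon = 0$ the map $y_0(x') = U(x', 0)^T$ has $\partial_i y_0 = U e_i$, so the uniqueness part of \Cref{lemma:definition_nu} forces $\nu_0 = U e_3$ (indeed $(\nabla y_0, U e_3) = U$ and $U U^{-1} = \Id \in \SO(3)$). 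Plugging the expansion of $y_\varepsilon$ into the explicit formula for $\nu_\varepsilon$ from \Cref{lemma:definition_nu}, and noting that the denominator $|U^{-1} e_3|^2$ is a fixed positive constant while the numerator depends smoothly on $\nabla y_\varepsilon$, I obtain $\nu_\varepsilon = U e_3 + O(\varepsilon)$ uniformly on $\bar S$.

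Combining these two expansions,
\[
	\partial_{ij} y_\varepsilon \cdot \nu_\varepsilon = \varepsilon\, \partial_{ij} v \, (U^{-1} e_3 \cdot U e_3) + O(\varepsilon^2) = \varepsilon\, \partial_{ij} v + O(\varepsilon^2),
\]
where I used the symmetry of $U$ to get $U^{-1} e_3 \cdot U e_3 = e_3 \cdot U^{-1} U e_3 = 1$. Inserting this into the identity of the first paragraph gives $\partial_i y_\varepsilon \cdot \partial_j \nu_\varepsilon = -\varepsilon \partial_{ij} v + O(\varepsilon^2)$, which is precisely the claim $\nabla y_\varepsilon^T \nabla \nu_\varepsilon = -\varepsilon \nabla^2 v + O(\varepsilon^2)$ component by component.

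The only non-trivial step is the uniform $O(\varepsilon)$-bound on $\nu_\varepsilon - U e_3$; this is where the $W^{2, \infty}$-hypotheses on $v$ and $u_\varepsilon$ are used, as they guarantee that $\nabla y_\varepsilon$ stays $L^\infty$-close to the constant matrix $U^{1, 2}$ so that the trilinear expression defining $\nu_\varepsilon$ in \Cref{lemma:definition_nu} can be linearized with a controlled remainder. Everything else is pure arithmetic on the Gram matrix.
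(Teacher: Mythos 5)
Your argument is correct, and it takes a genuinely different --- and cleaner --- route than the paper's. The paper proceeds by expanding $\partial_1 y_\varepsilon \wedge \partial_2 y_\varepsilon$ and its gradient directly, then substitutes into the explicit formula for $\nu_\varepsilon$ from \Cref{lemma:definition_nu}; this requires the identity $Ua\wedge Ub = \det(U)U^{-1}(a\wedge b)$ and careful bookkeeping of several first-order terms. You instead exploit the relation $\partial_i y_\varepsilon \cdot \partial_j \nu_\varepsilon = -\partial_{ij} y_\varepsilon \cdot \nu_\varepsilon$, obtained by differentiating the constant Gram entry $\partial_i y_\varepsilon \cdot \nu_\varepsilon = (U^2)_{i3}$, which reduces the lemma to identifying the leading terms of $\partial_{ij} y_\varepsilon$ and of $\nu_\varepsilon$ separately. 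The former is read off immediately as $\varepsilon\,\partial_{ij} v\,U^{-1}e_3 + O(\varepsilon^2)$; the latter, $\nu_\varepsilon = Ue_3 + O(\varepsilon)$ uniformly, follows because the formula in \Cref{lemma:definition_nu} is polynomial in $\nabla y_\varepsilon$ and $\nabla y_\varepsilon = U^{1,2} + O(\varepsilon)$ in $L^\infty$ (the identification $\nu_0 = Ue_3$ uses that $\{Ue_i\}$ and $\{U^{-1}e_i\}$ are dual bases, by symmetry of $U$). The pairing $U^{-1}e_3\cdot Ue_3 = 1$ then closes the argument. Notably, your key identity is precisely the chain of equalities the paper records in \eqref{eq:chain_equalities_y_nu} for a different purpose, so you have in effect unified two pieces of the paper. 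The tradeoff: the paper's longer computation yields the full first-order expansion of $\nabla\nu_\varepsilon$ as a by-product, whereas your route produces exactly the pairing the lemma asks for, more economically.
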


\begin{proof}
    First, we compute $\partial_1 y_\varepsilon \wedge \partial_2 y_\varepsilon$. We have
    \[
        \partial_1 y_\varepsilon \wedge \partial_2 y_\varepsilon = Ue_1 \wedge Ue_2 + \varepsilon \left[ Ue_1 \wedge U^{-1}\begin{pmatrix}
            0 \\
            \partial_2 v
        \end{pmatrix} + U^{-1}\begin{pmatrix}
            0 \\ \partial_1 v
        \end{pmatrix} \wedge U e_2 \right] + O(\varepsilon^2) \,.
    \]
    Given the identity $Ua \wedge Ub = \det(U) U^{-1} (a \wedge b)$, we easily deduce that
    \begin{align}
        \partial_1 y_\varepsilon \wedge \partial_2 y_\varepsilon & = Ue_1 \wedge Ue_2 \\
        & \hphantom{=} \, \,+ \varepsilon \det(U) U^{-1} \left[\partial_2 v e_1 \wedge (U^{-1})^2 e_3 + \partial_1 v (U^{-1})^2e_3 \wedge e_2 \right] + O(\varepsilon^2) \,.
    \end{align}
    Computing the cross products, we get
    \[
         \partial_1 y_\varepsilon \wedge \partial_2 y_\varepsilon = Ue_1 \wedge Ue_2 - \varepsilon \det(U) U^{-1} \begin{pmatrix}
            |U^{-1} e_3|^2 \partial_1 v\\
            |U^{-1} e_3|^2 \partial_2 v\\
            -\sum_{k=1}^2 (U^{-1} e_k \cdot U^{-1} e_3) \partial_k v
         \end{pmatrix} + O(\varepsilon^2) \,.
    \]
    Let us set $q(x') := -\frac{1}{|U^{-1}e_3|^2}\sum_{k=1}^2 (U^{-1} e_k \cdot U^{-1} e_3) \partial_k v$. We have 
    \[
        \nabla (\partial_1 y_\varepsilon \wedge \partial_2 y_\varepsilon) = -\varepsilon |U^{-1}e_3|^2 \det(U) U^{-1}\begin{pmatrix}
            \nabla^2 v \\ \nabla q
        \end{pmatrix} + O(\varepsilon^2) \,.
    \]
     Now observe that for every triplet of indices $i, j, k = 1, 2$ we have
    \[
        \partial_i y_\varepsilon \cdot \partial^2_{jk} y_\varepsilon = O(\varepsilon^2) \,.
    \]
    Moreover, 
    \[
        \partial_i y_\varepsilon^T U^{-1} = e_i^T + O(\varepsilon) \,.
    \]
    Combining the previous equations with the definition of $\nu_\varepsilon$, the thesis follows.
\end{proof}

\begin{lemma}\label{lemma:existence_isometry}
    Let $U \in \r^{3 \times 3}$ be a symmetric and positive definite matrix and define $A = \sqrt{(U^2)'}$. Suppose that $S$ is simply connected. Let $v \in W^{2,2}(S)$ be such that $|U^{-1}e_3|\|\nabla v A^{-1}\|_{L^\infty} < 1$. Then, there exists $\phi \in W^{2,2}(S; \r^2)$ such that the map
    \begin{equation}\label{eq:definition_isometry}
        y(x') = vU^{-1}e_3 + U\begin{pmatrix}
            \phi\\
            0
        \end{pmatrix}
    \end{equation}
    satisfies $\nabla y^T \nabla y = (U^2)'$ if and only if $\det(\nabla^2 v) = 0$. Moreover, if $v$ satisfies the condition $|U^{-1}e_3|\|\nabla v A^{-1}\|_{L^\infty} < \frac{1}{2}$, then $\phi$ can be chosen such that $u := U  (\phi - x', 0)^T$ satisfies the following estimates:
    \begin{align}
        \|\nabla^2 u\|_{L^2} & \leq C \|\nabla v\|_{L^\infty} \|\nabla^2 v\|_{L^2} \,, \label{eq:bound_norm_isometry_L2}\\
        \|u\|_{W^{2,2}} & \leq C \|\nabla v\|_{L^\infty} \|\nabla^2 v\|_{L^2} + C \|\nabla v\|^2_{L^2} \label{eq:bound_norm_isometry_W12}\,.
    \end{align}
    Finally, if $v \in W^{2, \infty}(S)$, then $u \in W^{2,\infty}(S; \r^2)$ and the following inequality holds:
    \begin{equation}
        \|u\|_{W^{2, \infty}} \leq C(\|\nabla^2 v\|_{L^\infty}\|\nabla v\|_{L^\infty} + \|\nabla v \|_{L^\infty}^2) \,. \label{eq:bound_norm_isometry_W3}
    \end{equation}
\end{lemma}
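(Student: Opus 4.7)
The plan is to reduce the problem to the classical isometric-immersion setting treated in \cite[Section 8]{FRIESECKE2006} by an explicit change of unknowns followed by a linear rescaling of the domain, and then to pull all the estimates back. First I would expand $\nabla y = U^{-1}e_{3}\otimes\nabla v + U^{1,2}\nabla\phi$ and observe that the cross terms in $\nabla y^{T}\nabla y$ vanish: $U(U^{-1}e_{3}) = e_{3}$ yields $U_{1,2}U^{-1}e_{3} = 0$, while $(U^{1,2})^{T}U^{1,2} = (U^{2})'$. Hence the isometry constraint reduces to
\[
(\nabla\phi)^{T}(U^{2})'\nabla\phi = (U^{2})' - |U^{-1}e_{3}|^{2}\,\nabla v\otimes\nabla v.
\]
Setting $w := |U^{-1}e_{3}|\,v$, $\chi := A\phi$, and performing the linear change of variables $\tilde x := Ax'$ in the domain with $\tilde\chi(\tilde x) := \chi(A^{-1}\tilde x)$ and $\tilde w(\tilde x) := w(A^{-1}\tilde x)$, a short computation further reduces the condition to the classical flat equation
\[
(\nabla_{\tilde x}\tilde\chi)^{T}\nabla_{\tilde x}\tilde\chi = \Id - \nabla_{\tilde x}\tilde w\otimes\nabla_{\tilde x}\tilde w
\]
on the simply connected Lipschitz domain $\tilde S := A(S)$; the hypotheses of the lemma translate exactly into $\|\nabla_{\tilde x}\tilde w\|_{L^{\infty}} = |U^{-1}e_{3}|\,\|\nabla v A^{-1}\|_{L^{\infty}} < 1$ (resp.\ $< \tfrac{1}{2}$), i.e.\ the subcritical bounds required by the classical theory.

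Next, I would invoke the explicit construction from \cite[Section 8]{FRIESECKE2006}: under the subcriticality $\|\nabla\tilde w\|_{L^{\infty}} < 1$, it produces $\tilde\chi \in W^{2,2}(\tilde S;\r^{2})$ solving the reduced equation if and only if $\det\nabla^{2}_{\tilde x}\tilde w = 0$ a.e., together with the quantitative bounds
\begin{align*}
\|\nabla^{2}\tilde\chi\|_{L^{2}(\tilde S)} &\leq C\,\|\nabla\tilde w\|_{L^{\infty}}\,\|\nabla^{2}\tilde w\|_{L^{2}(\tilde S)}, \\
\|\tilde\chi(\tilde x) - \tilde x\|_{W^{2,2}(\tilde S)} &\leq C\bigl(\|\nabla\tilde w\|_{L^{\infty}}\,\|\nabla^{2}\tilde w\|_{L^{2}(\tilde S)} + \|\nabla\tilde w\|_{L^{2}(\tilde S)}^{2}\bigr),
\end{align*}
and an analogous $W^{2,\infty}$ estimate when $\tilde w \in W^{2,\infty}$. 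Since $\nabla^{2}_{\tilde x}\tilde w = A^{-1}\nabla^{2}_{x}w\,A^{-1}$ and $A$ is invertible, the condition $\det\nabla^{2}_{\tilde x}\tilde w = 0$ is equivalent to $\det\nabla^{2}v = 0$, which settles the iff statement. For the necessary direction one can alternatively observe that the existence of an isometric immersion of the Riemannian surface $(\tilde S,\Id - \nabla\tilde w\otimes\nabla\tilde w)$ into flat $\r^{2}$ forces its Gauss curvature (proportional to $\det\nabla^{2}\tilde w$) to vanish.

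Finally, I would pull back: $\phi(x') = A^{-1}\tilde\chi(Ax')$ gives
\[
u(x') = U^{1,2}\,A^{-1}\bigl(\tilde\chi(Ax') - Ax'\bigr),
\]
and the chain rule together with the change-of-variables formula transfer the bounds above to the required inequalities \eqref{eq:bound_norm_isometry_L2}--\eqref{eq:bound_norm_isometry_W3} on $u$, with constants depending only on $U$ (through $|A|$, $|A^{-1}|$, $|\det A|$, $|U^{1,2}|$, and $|U^{-1}e_{3}|$); the key identities are $\nabla\tilde w = |U^{-1}e_{3}|\,A^{-1}\nabla v$ and $\nabla^{2}\tilde w = |U^{-1}e_{3}|\,A^{-1}\nabla^{2}v\,A^{-1}$, so that $\|\nabla\tilde w\|_{L^{p}(\tilde S)}$ and $\|\nabla^{2}\tilde w\|_{L^{p}(\tilde S)}$ are controlled by $\|\nabla v\|_{L^{p}(S)}$ and $\|\nabla^{2}v\|_{L^{p}(S)}$ up to factors of $|\det A|^{1/p}$. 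The main obstacle is not this reduction but the classical construction step itself: the existence of $\tilde\chi$ when $\det\nabla^{2}\tilde w = 0$ is built from an explicit parametrization of $\tilde\chi$ along the straight characteristic lines foliating the set $\{\nabla^{2}\tilde w\neq 0\}$, and promoting the estimates to $W^{2,\infty}$ requires carefully tracking the regularity of the associated ODE in the data $\tilde w$. These steps are by now standard and are carried out in \cite[Section 8]{FRIESECKE2006} and \cite{HORNUNG2011}.
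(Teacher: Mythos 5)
Your proof follows exactly the route of the paper: the same change of unknowns $w=|U^{-1}e_{3}|v$, $\chi=A\phi$ and domain rescaling $x'\mapsto Ax'$ reduce the constraint to the flat equation $(\nabla\tilde\chi)^{T}\nabla\tilde\chi+\nabla\tilde w\otimes\nabla\tilde w=\Id_{2}$ on $AS$, after which the paper likewise invokes \cite[Theorem 7 and Eqs.~(180)--(181)]{FRIESECKE2006} and pulls the bounds back through $A$. One small inaccuracy: the $W^{2,\infty}$ estimate \eqref{eq:bound_norm_isometry_W3} is not taken from \cite[Section 8]{FRIESECKE2006} or \cite{HORNUNG2011} but from \cite[Lemma 4.3]{TOLOTTI2024}, so after the reduction to $U=\Id$ you should cite that lemma rather than asserting the $W^{2,\infty}$ regularity of the characteristic-line construction is already written out in the former references.
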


\begin{proof}
    Observe that $y$ of the form \eqref{eq:definition_isometry} satisfies $\nabla y^T \nabla y = A^2$ if and only if
    \[
        \nabla \phi^T A^2 \nabla \phi + |U^{-1}e_3|^2 \nabla v \otimes \nabla v = A^2 \,.
    \]
    Defining $\phi' := A \phi$ and $v' = |U^{-1}e_3| v$ the previous equation is equivalent to solve
    \[
        (\nabla \phi' A^{-1})^T (\nabla \phi' A^{-1}) +  (\nabla v' A^{-1})^T \otimes (\nabla v' A^{-1})^T = \Id_2
    \]
    for the unknown $\phi'$. Define now $\tilde v' \in W^{2,2}(AS; \r^3)$ given by $\tilde v'(Ax') = v'(x')$. Solving the above equation for $\phi'$ is equivalent to solve
    \[
        (\nabla \tilde \phi')^T \nabla \tilde \phi' +  \nabla \tilde v' \otimes \nabla \tilde v' = \Id_2 \,,
    \]
    for $\tilde \phi' \in W^{1,2}(AS; \r^2)$. Thus, we can conclude by applying \cite[Theorem 7]{FRIESECKE2006}. Note that the transformations $v \leadsto v' \leadsto \tilde v'$ preserve the property of having null determinant of the Hessian, namely
    \[
        \det( \nabla^2 v) = 0 \iff \det( \nabla^2 v') = 0 \iff \det( \nabla^2 \tilde v') = 0 \,.
    \]
    Moreover, they preserve the boundedness of the gradient in the $L^\infty$ norm. Finally, define $\tilde u' = \tilde \phi' - x'$. It is easy to show that 
    \begin{align}
        & \|\nabla u\|_{L^2} = C\|\nabla \tilde u'\|_{L^2} \,,\\
        & \|u\|_{W^{1,2}} = C\|\tilde u'\|_{W^{1,2}} \,,\\
        & \|\nabla v\|_{L^\infty} = C\|\nabla \tilde v'\|_{L^\infty} \,,\\
        & \|\nabla^2 v\|_{L^2} = C\|\tilde v'\|_{L^2} \,.
    \end{align}
Estimates \eqref{eq:bound_norm_isometry_L2}-\eqref{eq:bound_norm_isometry_W12} then follow from \cite[Equations (180)--(181)]{FRIESECKE2006}. We are left to prove \eqref{eq:bound_norm_isometry_W3}. Note that it is sufficient to do the proof for $\tilde u'$, so we can suppose $U = \Id$. Then, the result follows from \cite[Lemma 4.3]{TOLOTTI2024}.
\end{proof}

\begin{lemma}\label{lemma:density_isometries}
    Let $U \in \r^{3 \times 3}$ be a symmetric and positive definite matrix and let $A = \sqrt{(U^2)'}$. Suppose that $S$ is simply connected and satisfies \eqref{eq:boundary_condition}. Let $v \in W^{2,2}(S)$ be such that 
    \[
        |U^{-1}e_3|\|\nabla v A^{-1}\|_{L^\infty} < 1
    \]
    and $\det(\nabla^2 v) = 0$. Then there is a sequence $(v_n) \subset C^\infty(\bar S)$ such that $\det(\nabla^2 v_n) = 0$ for every $n \in \n$ and $v_n \to v$ in $W^{2,2}(S)$.
\end{lemma}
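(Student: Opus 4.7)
The plan is to reduce the density statement for $v$ to the density of smooth isometric immersions, which is exactly the content of Hornung's approximation theorem. First, apply \Cref{lemma:existence_isometry} to $v$: the hypotheses $|U^{-1}e_3|\,\|\nabla v A^{-1}\|_{L^\infty} < 1$ and $\det \nabla^2 v = 0$ yield $\phi \in W^{2,2}(S;\r^2)$ such that the map
\[
    y := v\,U^{-1}e_3 + U\begin{pmatrix} \phi \\ 0 \end{pmatrix}
\]
is a $W^{2,2}$ isometric immersion of $(S,(U^2)')$ into $\r^3$. The change of variables $\tilde y(\tilde x) := y(A^{-1}\tilde x)$ turns $y$ into a Euclidean $W^{2,2}$ isometric immersion of $\tilde S := AS$; this set is simply connected and still satisfies \eqref{eq:boundary_condition} because $A$ is a bi-Lipschitz homeomorphism. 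I would then apply \cite[Theorem 1]{HORNUNG2011} to $\tilde y$ to obtain a sequence $\tilde y_n \in C^\infty(\overline{\tilde S};\r^3)$ of smooth Euclidean isometric immersions with $\tilde y_n \to \tilde y$ in $W^{2,2}(\tilde S;\r^3)$, and pull them back via $y_n(x') := \tilde y_n(Ax')$. The result is a sequence $y_n \in C^\infty(\bar S;\r^3)$ with $\nabla y_n^T \nabla y_n = (U^2)'$ and $y_n \to y$ in $W^{2,2}(S;\r^3)$.

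Because $Ue_i \cdot U^{-1}e_3 = e_i \cdot e_3 = 0$ for $i=1,2$, the triple $\{Ue_1,Ue_2,U^{-1}e_3\}$ is an orthogonal basis of $\r^3$, and each $y_n$ decomposes uniquely as $y_n = v_n U^{-1}e_3 + U(\phi_n,0)^T$ with
\[
    v_n := \dfrac{y_n \cdot U^{-1}e_3}{|U^{-1}e_3|^2} \in C^\infty(\bar S)\,.
\]
The identical formula applied to $y$ returns the original $v$, so the $W^{2,2}$ convergence $y_n \to y$ immediately promotes to $v_n \to v$ in $W^{2,2}(S)$. It remains to check that $\det \nabla^2 v_n = 0$. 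Writing $\nabla y_n^T \nabla y_n = (U^2)'$ in the above decomposition gives the system
\[
    \nabla \phi_n^T A^2 \nabla \phi_n + |U^{-1}e_3|^2 \nabla v_n \otimes \nabla v_n = A^2\,,
\]
so the pair $(v_n,\phi_n)$ belongs exactly to the class considered in \Cref{lemma:existence_isometry}, and its ``only if'' direction produces the desired vanishing $\det \nabla^2 v_n = 0$.

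The main technical obstacle is that the iff in \Cref{lemma:existence_isometry} is formulated under the \emph{strict} bound $|U^{-1}e_3|\,\|\nabla v_n A^{-1}\|_{L^\infty} < 1$, whereas the isometry identity above only yields the non-strict pointwise bound $|U^{-1}e_3|^2 |\nabla v_n A^{-1}|^2 \leq 1$, obtained by taking determinants in the identity $(\det \nabla \phi_n)^2 = (\det A)^2 (1-|U^{-1}e_3|^2|\nabla v_n A^{-1}|^2) \geq 0$, and $W^{2,2}$ convergence does not directly upgrade to $L^\infty$ control of $\nabla v_n$. I see two ways to close this gap: the first is intrinsic, invoking Gauss's Theorema Egregium on the smooth immersion $y_n$ of the flat $2$D manifold $(S,(U^2)')$ into $\r^3$ to conclude that the extrinsic Gauss curvature $\det II_n / \det(U^2)'$ vanishes identically, and then extracting $\det \nabla^2 v_n = 0$ from a direct expansion of the second fundamental form $II_n$ in the chosen decomposition; the second is to perturb the approximating sequence slightly, for instance by combining Hornung's density with the forward direction of \Cref{lemma:existence_isometry} applied to a damped function $(1-\delta_n)v$ with $\delta_n \downarrow 0$, so that the strict inequality is enforced along the sequence and the iff applies directly.
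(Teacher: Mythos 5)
Your proposal follows the same overall route as the paper: apply \Cref{lemma:existence_isometry} to produce an isometric immersion $y$ of $(S,(U^2)')$, change variables to a Euclidean $W^{2,2}$ isometric immersion of $AS$, invoke \cite[Theorem 1]{HORNUNG2011} for smooth approximation, pull back, and read off the out-of-plane component. You are right to flag that the smooth approximants' out-of-plane components only inherit the non-strict bound $|U^{-1}e_3|\,|\nabla v_n A^{-1}| \leq 1$, so the ``only if'' direction of \Cref{lemma:existence_isometry} as stated (with its blanket strict hypothesis) cannot be invoked directly. The paper sidesteps this by citing the forward implication of \cite[Theorem 7]{FRIESECKE2006} on the smooth isometric immersions $\tilde y_n$ directly: for a $W^{2,2}$ isometric immersion of a flat planar domain, every component $w$ of $y$ satisfies $\nabla^2 w = n_w\, \mathrm{II}$ with $\det \mathrm{II} = 0$, so $\det \nabla^2 w = 0$, with no $L^\infty$ restriction required. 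That is precisely the Theorema Egregium argument of your first proposed fix, so your resolution matches the paper's. Your second suggestion (damping $v \leadsto (1-\delta_n)v$) does not close the gap as described: the strict-bound difficulty lives on the Hornung approximants $v_n$, which already are what you need to make smooth, and pre-damping $v$ does not regularize them. Two small remarks on details: $\{Ue_1,Ue_2,U^{-1}e_3\}$ is a basis but not an orthogonal one (since $Ue_1 \cdot Ue_2 = (U^2)_{12}$ need not vanish); your formula $v_n = (y_n \cdot U^{-1}e_3)/|U^{-1}e_3|^2$ is still correct because it uses only that $U(\phi_n,0)^T \cdot U^{-1}e_3 = 0$. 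Also, your target-side change of variables differs from the paper's by a fixed orthogonal transformation, which is immaterial for applying Hornung.
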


\begin{proof}
    Applying \Cref{lemma:existence_isometry}, we construct $\phi$ such that
    \[
        y(x') = vU^{-1}e_3 + U\begin{pmatrix}
            \phi\\
            0
        \end{pmatrix}
    \]
    satisfies $\nabla y^T \nabla y = (U^2)'$. Using the same notation as in the proof of \Cref{lemma:existence_isometry}, we note that $\phi$ is defined in such a way that $\tilde \phi' := A\phi(Ax)$ gives an isometric embedding of $AS \subset \r^2$ for the Euclidean metric, that is, $\nabla \tilde y^T \nabla \tilde y = \Id_2$, where $\tilde y = (\tilde \phi', \tilde v')^T$. Since \eqref{eq:boundary_condition} holds true also for the set $AS$, by \cite[Theorem 1]{HORNUNG2011}, there is a sequence of maps $(\tilde y_n) \subset W^{2,2}_{\text{iso}}(AS; \r^3) \cap C^\infty(\overline{AS}; \r^3)$ such that $\tilde y_n \to \tilde y$ in $W^{2,2}(AS; \r^3)$, where
    \[
    	W^{2, 2}_{\text{iso}}(AS; \r^3) = \{y \in W^{2,2}(AS; \r^3) \colon \nabla y^T \nabla y = \Id_2\} \,.
    \]
    Let $\tilde v_n = \tilde y_n \cdot e_3$. By \cite[Theorem 7]{FRIESECKE2006}, we deduce that $\det(\nabla^2 \tilde v_n) = 0$ for every $n \in \n$. Clearly $\tilde v_n \to \tilde v$ in $W^{2, 2}(AS)$, so $v_n(x) = |U^{-1}e_3| \tilde v_n(Ax) \to v$ in $W^{2,2}(S)$. Lastly, $\det(\nabla^2 v_n) = \det(|U^{-1}e_3|\nabla^2 \tilde v_n A^{-2}) = 0$. 
\end{proof}

\noindent \textbf{Acknowledgments.} The author acknowledges support by PRIN2022 number 2022J4FYNJ funded by MUR, Italy, and by the European Union--Next Generation EU. The author is a member of GNAMPA--INdAM.

\printbibliography

\end{document}